\documentclass[a4paper,11pt,twoside,notitlepage]{amsart}

\usepackage{amsmath,amssymb,amsthm,mathtools,mathrsfs}
\usepackage[shortlabels]{enumitem}
\usepackage[ocgcolorlinks,linkcolor=blue,citecolor=blue,urlcolor=blue]{hyperref}
\usepackage{url}
\numberwithin{equation}{section}
\mathtoolsset{showonlyrefs}
\allowdisplaybreaks

\newtheorem{theorem}{Theorem}[section]
\newtheorem{proposition}[theorem]{Proposition}
\newtheorem{lemma}[theorem]{Lemma}
\newtheorem{corollary}[theorem]{Corollary}
\newtheorem{definition}[theorem]{Definition}
\newtheorem{remark}[theorem]{Remark}
\newtheorem{question}{Question}[section]

\DeclareMathOperator{\tr}{tr}

\DeclareMathOperator{\diam}{diam}
\DeclareMathOperator{\dist}{dist}

\DeclareMathOperator{\Sym}{Sym}
\DeclareMathOperator{\Gr}{Gr}

\newcommand{\R}{\mathbb R}

\newcommand{\p}{\partial}
\newcommand{\eps}{\varepsilon}
\newcommand{\ol}{\overline}
\newcommand{\Hq}{\mathcal H^q}
\newcommand{\Rq}{\mathcal R_q}
\newcommand{\Gk}{\Gamma_k}
\newcommand{\para}[1]{\vspace{3mm}\noindent\textbf{#1.}}

\title[Affine section tomography for $k$-Hessian equations]{Affine section tomography for inverse source problems in $k$-Hessian equations with restricted large boundary data}

\author[Y.-H. Lin]{Yi-Hsuan Lin}
\address{Department of Applied Mathematics, National Yang Ming Chiao Tung University, Hsinchu, Taiwan \& Fakult\"at f\"ur Mathematik, University of Duisburg-Essen, Essen, Germany}
\email{yihsuanlin3@gmail.com}

\keywords{Inverse source problems, $k$-Hessian equations, fully nonlinear elliptic equations, large boundary data, affine Radon transform}
\subjclass[2020]{35R30, 35J60, 35J96, 35J25}

\begin{document}
	
	\begin{abstract}
		We consider an inverse source problem for the $k$-Hessian equation
		\begin{equation*}
			\sigma_k(D^2u)=f(x)
		\end{equation*}
		on the $k$-admissible branch in a smooth uniformly convex domain in $\R^n$, where $2\le k\le n$. We prove that the nonlinear Dirichlet-to-Neumann map determines the positive smooth source from its values on the restricted large-data rays $t\phi_E|_{\p\Omega}$, where $E\in\Gr(k-1,n)$, $\phi_E(x)=|P_Ex|^2/2$, and $t$ is sufficiently large. The Hessian of each boundary profile has exactly $k-1$ large directions and is flat on $V=E^\perp$. Thus, the leading profile lies on a rank $k-1$ face of the $k$-Hessian structure, and the first source-dependent correction is governed by the missing directions. More precisely, this correction solves fiberwise Poisson equations on the affine sections $\Omega\cap(y+V)$ of dimension $q=n-k+1$. We prove that these sectionwise solutions patch smoothly through glancing points where the sections collapse, and we obtain boundary normal derivative asymptotics by local barriers. The boundary flux of the correction gives the section integrals $\int_{\Omega\cap(y+V)}f\,d\mathcal H^q$. Varying $E$ yields the affine $q$-plane Radon transform of the zero extension of $f$. We give an explicit reconstruction formula through the Fourier slice identity, and the injectivity of the affine Radon transform gives uniqueness. The endpoint $k=n$ recovers the Monge--Amp\`ere chord/X-ray geometry, while the range $n\ge3$ and $2\le k<n$ gives inverse source results for genuinely non-determinant Hessian equations.
	\end{abstract}

	\maketitle
	\tableofcontents
	
	\section{Introduction}
	
	The $k$-Hessian equation is one of the standard fully nonlinear elliptic partial differential equations (PDEs) associated with the Hessian matrix. It interpolates between the Poisson equation and the Monge--Amp\`ere equation. When $k=1$, the operator is the trace of the Hessian: $\sigma_1(D^2u)=\tr(D^2u)$; while when $k=n$, the operator is the determinant of the Hessian: $\sigma_n(D^2u)=\det D^2u$. Thus, the endpoint $k=n$ is precisely the Monge--Amp\`ere equation on the convex, or equivalently $n$-admissible, branch. In dimension two, the $k$-Hessian equations are only the Poisson equation and the Monge--Amp\`ere equation; since the present theorem starts at $k=2$, its two-dimensional case is exactly the Monge--Amp\`ere endpoint. The genuinely intermediate non-determinant Hessian equations occur when $n\ge3$ and $2\le k\le n-1$. They arise in nonlinear potential theory, in the theory of $k$-convex functions and Hessian measures, and in geometric problems where elementary symmetric functions of curvature-type quantities are prescribed. We refer to \cite{Wang2009_kHessian} for an account of the $k$-Hessian equation, to \cite{ChouWang2001} for its variational theory, and to \cite{TrudingerWang1999} for Hessian measures associated with $k$-convex functions.
	
	We use the following notation for the equation. Let $\Omega\subset\R^n$ be a bounded smooth domain. For a symmetric matrix $M$, let $\lambda(M)=(\lambda_1(M),\ldots,\lambda_n(M))$ be its eigenvalues. We write $\sigma_j(M)$ for the $j$-th elementary symmetric function of $\lambda(M)$:
	\begin{equation}\label{eq:sigma-def-intro}
		\sigma_j(M)=\sum_{1\le i_1<\cdots<i_j\le n}\lambda_{i_1}(M)\cdots\lambda_{i_j}(M).
	\end{equation}
	The $k$-Hessian equation is
	\begin{equation}\label{eq:kHessian}
		\sigma_k(D^2u)=f(x) \quad \text{in } \Omega.
	\end{equation}
	This equation is elliptic only after one chooses the correct branch. The relevant branch is the $k$-admissible cone
	\begin{equation}\label{eq:Gk-intro}
		\Gk=\left\{\lambda\in\R^n: \, \sigma_j(\lambda)>0 \text{ for } 1\le j\le k\right\}.
	\end{equation}
	We also write $M\in\Gk$ if $\lambda(M)\in\Gk$. A $C^2$ function $u$ is called $k$-admissible if $D^2u(x)\in\Gk$ for all $x\in\Omega$. On this branch, the operator is elliptic, and $\sigma_k^{1/k}$ is concave. This admissible structure is fundamental in the solvability theory of Caffarelli--Nirenberg--Spruck \cite{CNS_nonlinear_Hessian}.
	
	In the model \eqref{eq:kHessian}, the unknown function $u$ may be viewed as a potential, a height function, or a state variable, depending on the context. The quantity $f$ is the prescribed interior density for the $k$-Hessian measure of $u$. In the linear endpoint $k=1$, this is the usual source density in the Poisson equation, such as a charge density or mass density in potential theory. In the Monge--Amp\`ere endpoint $k=n$, it is the prescribed determinant density, or volume distortion density, associated with the gradient map. For intermediate $k$, the source prescribes the $k$-th elementary Hessian response of the potential. Thus, $f$ is the interior quantity one would like to determine from boundary observations.
	
	The present paper studies whether this source can be recovered from boundary measurements. For a smooth boundary value $g$, let $u_g$ be the smooth $k$-admissible solution of
	\begin{equation}\label{eq:forward-intro}
		\begin{cases}
			\sigma_k(D^2u_g)=f & \text{in } \Omega,\\
			u_g=g & \text{on } \p\Omega.
		\end{cases}
	\end{equation}
	The nonlinear Dirichlet-to-Neumann (DN) map is
	\begin{equation}\label{eq:DN-intro}
		\Lambda_f:C^\infty(\p\Omega)\to C^\infty(\p\Omega), \quad
		g \mapsto \p_\nu u_g|_{\p\Omega},
	\end{equation}
	where $\nu$ is the outward unit normal. Thus, the boundary measurement consists of prescribing the boundary potential or height $g$ and recording the normal response $\p_\nu u_g$. Depending on the model, this normal derivative may be interpreted as a flux, a boundary slope, or a boundary response.
	
	\begin{question}[Inverse source problem]\label{Q:IP}
		Can we determine the unknown source $f$ from the nonlinear DN map $\Lambda_f$?
	\end{question}
	
	This is an inverse source problem: the unknown is an interior density, while the available measurements are made only at the boundary. For a linear elliptic equation, the knowledge of the DN map with arbitrary Dirichlet data is not enough to recover an arbitrary source. Here, one uses the nonlinear boundary response for a family of boundary values. We show that for the $k$-Hessian equation with $k\ge2$, a suitably large boundary family reveals an integral transform of the source.

	\para{Earlier literature}
	There is now a substantial literature on inverse problems for nonlinear partial differential equations. A common approach is to linearize the nonlinear DN map and reduce the problem to an inverse problem for the linearized equation. This method goes back at least to the work of Isakov \cite{Isa93}. Later developments showed that higher-order linearizations can reveal nonlinear information which is not visible at first order; see, for instance, \cite{KN02, Sun96, Sun10}. A systematic use of nonlinear interactions was introduced for geometric nonlinear problems in \cite{KLU2018} and subsequently developed for semilinear elliptic equations in \cite{FO20, LLLS2019nonlinear}. We also refer to \cite{LLLS2019partial, LLST2022inverse, KU20b, KU20a, FLL23} for related semilinear elliptic inverse problems, including results with partial data and fractional power type nonlinearities.
	
	Quasilinear elliptic inverse problems have also been studied in several settings, including \cite{CFKKU2021calderon, KKU2022partial, CNV2019reconstruction, LW24_quasi}. Inverse problems related to the minimal surface equation and related geometric quasilinear models were considered, for example, in \cite{ABN20, CLLT24, CLT24, Nur24}. For a broader account of nonlinear inverse problems and further references, see the survey \cite{lassas2025introduction}.
	
	The present paper is closest in spirit to inverse source problems for nonlinear equations. In the linear case, the source cannot generally be recovered from the DN map because of the usual compactly supported gauge obstruction. Nonlinear equations may break this obstruction. Recent results in this direction include inverse source problems for semilinear elliptic and parabolic equations \cite{LL24_elliptic_source, KLL_reaction_source}, and a quasilinear elliptic inverse source problem \cite{liimatainen2026inverse}. In the fully nonlinear setting, inverse source problems for the Monge--Amp\`ere equation and for more general two-dimensional admissible fully nonlinear equations were studied in \cite{LL2025IP_Monge_Ampere, LLW26_fully_nonlinear}. These works use mechanisms tied either to the Monge--Amp\`ere structure or to two-dimensional linearization phenomena.
	
	The large boundary data approach of \cite{CG26_large} gives another mechanism for the Monge--Amp\`ere equation. The large cylindrical profile leaves one missing direction, and the first correction is governed by an equation on one-dimensional chords; the resulting boundary information is of X-ray type. The present work develops a different Hessian-specific degeneracy. A rank $k-1$ large profile leaves the whole missing space $V=E^\perp$, whose dimension is $q=n-k+1$. The first variation identity
	\begin{equation}\label{eq:first_variation_id}
		D\sigma_k(P_E)[H]=\tr(P_VH)
	\end{equation}
	then turns the leading correction into a Poisson problem on the affine sections $\Omega\cap(y+V)$. For $2\le k<n$, these sections are genuinely higher-dimensional. Thus, the inverse problem is reduced not to chord integrals but to the affine $q$-plane Radon transform of the source. The proof also requires a new regularity step for sectionwise Poisson solutions through glancing collapse and a barrier argument converting the large-data expansion into boundary normal derivative asymptotics. In this way, the intermediate Hessian equations give genuinely non-determinant fully nonlinear inverse source results in dimensions $n\ge3$.

	\para{Difficulties and ideas of the proof}
	Two points distinguish this problem from the usual nonlinear inverse source arguments. The first one is the source obstruction already present in the linear equation $\Delta u=f$: changing $f$ by $\Delta w$ with $w$ compactly supported in the interior does not change the boundary Cauchy data. Thus, one needs to use the nonlinearity essentially. A common nonlinear strategy is to use the full DN map on an open set of boundary data and differentiate it several times. This is not the strategy here. The data used below are only large one-parameter rays for each direction $E$, so the proof cannot rely on higher-order linearization in arbitrary boundary directions.
	
	The second point is that the useful large profiles lie close to a degenerate rank $k-1$ face of the admissible structure. The imposed Hessian has $k-1$ large directions, while the operator $\sigma_k$ requires $k$ directions to produce its first nonzero contribution. This rank defect is precisely what reveals the source, but it also destroys ellipticity in the full domain at the leading scale. The first correction is therefore not governed by a uniformly elliptic equation in $\Omega$. It is governed by Poisson equations on the lower-dimensional affine sections parallel to $V=E^\perp$.
	
	This creates two analytic issues. First, the fiberwise Poisson solutions are initially defined section by section, and the sections may become tangent to $\p\Omega$ and collapse at glancing points. One has to prove that the sectionwise Dirichlet solutions nevertheless form a smooth function on $\ol\Omega$. This requires a local normal form for uniformly convex domains near glancing points and a regularity result for elliptic Dirichlet problems on shrinking balls. Second, the global comparison argument gives an $L^\infty$ large-data expansion, but the inverse problem needs the boundary normal derivative of the first correction. Since the large solutions are close to a degenerate Hessian profile, one cannot simply differentiate a uniformly elliptic expansion. We instead use local barriers near boundary points where the $V$-sections meet $\p\Omega$ transversely to pass from the $L^\infty$ expansion to the DN asymptotics.
	
	The proof starts from the algebraic identity \eqref{eq:first_variation_id}. If $u_{t,E}^f$ denotes the solution with boundary value $t\phi_E$, this identity forces the first correction in
	\begin{equation*}
		u_{t,E}^f=t\phi_E+t^{1-k}w_E^f+o(t^{1-k})
	\end{equation*}
	to satisfy $\Delta_Vw_E^f=f$ on the affine sections $\Omega\cap(y+V)$ with zero section boundary value. The large-$t$ DN asymptotics determine $\p_\nu w_E^f$ on the boundary points where the $V$-sections meet $\p\Omega$ transversely. The divergence theorem on each section then converts this boundary information into the section integrals
	\begin{equation*}
		\int_{\Omega\cap(y+V)} f\,d\mathcal H^q .
	\end{equation*}
	Thus, one large ray for a fixed $E$ recovers all section integrals in the direction $V=E^\perp$, and varying $E$ gives the full affine $q$-plane Radon transform of the zero extension of the source. This also gives a direct reconstruction formula: the Fourier transform of the zero extension of $f$ is obtained from the Fourier transform, in the section parameter, of the recovered affine section data. We record this formula after the main uniqueness theorem.

	\para{Mathematical formulations} The forward problem in the setting of the theorem is classical. If $\Omega$ is smooth and uniformly convex and $f\in C^\infty(\ol\Omega)$ is positive, the existence and uniqueness results in \cite{CNS_nonlinear_Hessian} give a unique smooth admissible solution for every smooth boundary value. We recall this in Section \ref{sec:forward}, together with the local smooth dependence of the solution on the boundary value. This means that the nonlinear DN map is defined on the usual smooth boundary data class.
	
	We now describe the large boundary family used in the inverse theorem. For $0\le m\le n$, let $\Gr(m,n)$ be the Grassmannian of $m$-dimensional linear subspaces of $\R^n$. Thus, an element $E\in\Gr(k-1,n)$ is a choice of $k-1$ linear directions. Let $P_E$ be the Euclidean orthogonal projection onto $E$, and set
	\begin{equation}\label{eq:V-def-intro}
		V=E^\perp, \quad q=\dim V=n-k+1.
	\end{equation}
	The space $E$ will be the space of large Hessian directions, and $V$ will be the complementary space of missing directions. Associated with $E$, define
	\begin{equation}\label{eq:phi-intro}
		\phi_E(x):=\frac12 |P_E x|^2.
	\end{equation}
	The boundary values used below are
	\begin{equation}\label{eq:large-data-intro}
		g_{t,E}=t\phi_E|_{\p\Omega}, \quad t\gg1.
	\end{equation}
	Since $D^2(t\phi_E)=tP_E$, this profile has exactly $k-1$ large Hessian directions and is flat in the directions in $V$. The operator $\sigma_k$ needs $k$ directions to give a leading contribution. Hence, the rank $k-1$ profile is one direction short, and the first correction is forced to appear in the missing directions $V$.
	
	This rank defect leads to a sectionwise equation. Let $\Omega_E:=P_E(\Omega)\subset E$. For $y\in\Omega_E$, define the affine section
	\begin{equation}\label{eq:sections-intro}
		\Omega_{E,y}=\Omega\cap(y+V).
	\end{equation}
	The variable $y$ labels the section, while the variables inside the section lie in $V$. If $z$ denotes the variable in $V$, then $\Delta_V$ means the Euclidean Laplacian in the $z$ variables only, with $y$ fixed. For a function defined in $\Omega$, the fiberwise Laplacian is given by 
	\begin{equation}\label{eq:DeltaV-intro}
		\Delta_Vu=\tr(P_VD^2u).
	\end{equation}
	On the boundary, the relevant transversality condition is measured by the $V$-component of the outward unit normal. We define
	\begin{equation}\label{eq:non-glancing}
		\Gamma_E:=\left\{x\in\p\Omega:\ |P_V\nu(x)|>0\right\},
		\quad
		\mathcal G_E:=\left\{x\in\p\Omega:\ P_V\nu(x)=0\right\}.
	\end{equation}
	Points in $\Gamma_E$ are called non-glancing points for the family of $V$-sections, while points in $\mathcal G_E$ are called glancing points. Equivalently, $x\in\mathcal G_E$ means that $V\subset T_x\p\Omega$. The boundary normal derivative asymptotics will be obtained on compact subsets of $\Gamma_E$, and the smoothness of the fiberwise correction through $\mathcal G_E$ will be proved in Section \ref{sec:Poisson_corrections}.
	
	Let $u_{t,E}^f$ be the solution with boundary value $t\phi_E$. The leading term of $u_{t,E}^f$ as $t\to\infty$ is the imposed quadratic profile $t\phi_E$. The next nontrivial term is the first term in the expansion that detects the source $f$. We write this expansion as
	\begin{equation}\label{eq:intro-asymp}
		u_{t,E}^f=t\phi_E+t^{1-k}w_E^f+o(t^{1-k}).
	\end{equation}
	Therefore, $w_E^f$ is the coefficient of the first lower-order term after the large profile $t\phi_E$. This correction is not the first linearized solution obtained by differentiating the solution map with respect to small boundary perturbations. Rather, it is the first asymptotic correction along the large ray $t\phi_E|_{\p\Omega}$, and its boundary flux converts the large-data DN asymptotics into section integrals of the source. This coefficient is determined by a Poisson equation on each nondegenerate section:
	\begin{equation}\label{eq:section-poisson-intro}
		\begin{cases}
			\Delta_Vw_E^f=f & \text{in } \Omega_{E,y},\\
			w_E^f=0 & \text{on } \p\Omega_{E,y}.
		\end{cases}
	\end{equation}
	The boundary normal derivative asymptotics determine the flux of $w_E^f$ through the section boundary. By the divergence theorem, this flux is the integral of $f$ over $\Omega_{E,y}$. It follows that the nonlinear DN map on the large boundary family determines the affine $q$-plane Radon transform of the zero extension of $f$. The Fourier slice identity then gives an explicit reconstruction formula, and the injectivity of the affine Radon transform gives uniqueness.
	
	When $k=n$, one has $\sigma_n(D^2u)=\det D^2u$, and the $n$-admissible branch is the convex branch of the Monge--Amp\`ere equation. In this endpoint, the space $V$ is one-dimensional. Writing $V=\R\omega$, the profile $\phi_E$ becomes $|P_{\omega^\perp}x|^2/2$, and the sections are chords in the direction $\omega$. For $2\le k<n$, the missing space has dimension $q\ge2$, and the proof uses higher-dimensional sections instead of chords. This intermediate case can occur only in dimensions $n\ge3$, and it is the genuinely non-determinant part of the $k$-Hessian result.
	
	The result should also be distinguished from the two-dimensional Monge--Amp\`ere inverse source theorem \cite{LL2025IP_Monge_Ampere} and from the two-dimensional fully nonlinear theorem \cite{LLW26_fully_nonlinear}. In dimension two, the $k$-Hessian family contains only the Laplace and Monge--Amp\`ere equations, so the present intermediate $k$-Hessian mechanism is not a two-dimensional one. The latter work treats more general operators $F(D^2u)$ in the plane and uses higher-order linearization together with structural assumptions on $F$. The present argument is specific to the $k$-Hessian class, but it is non-perturbative in the boundary data and gives higher-dimensional non-determinant examples when $n\ge3$ and $2\le k\le n-1$. The structural input is the first variation of $\sigma_k$ at a rank $k-1$ matrix: 
	\begin{equation}\label{eq:structure-intro}
		D\sigma_k(P_E)[H]=\tr(P_VH).
	\end{equation}
	Here $D\sigma_k(P_E)[H]=\frac{d}{ds}\big|_{s=0}\sigma_k(P_E+sH)$. This identity is responsible for the sectionwise Poisson equation.
	
	Related problems for Hessian equations have also been studied in several directions. Besides the Dirichlet theory \cite{CNS_nonlinear_Hessian}, we refer to \cite{Wang2009_kHessian} for background on $k$-Hessian equations and to \cite{ChouWang2001} for variational aspects. Serrin-type overdetermined problems for Hessian equations were considered in \cite{BNST_08}, while the present paper concerns the recovery of an unknown source from boundary measurements.
	
	The restriction $k\ge2$ is essential. If $k=1$, the equation is $\Delta u=f$. The construction above would give $q=n$, where $q=\dim V$ is defined in \eqref{eq:V-def-intro}, and the corresponding integral transform would only see the total integral over the whole space. This is not injective. There is also a direct PDE obstruction. Let $w\in C^\infty_c(\Omega)$ be nonzero, and let $\widetilde f=f+\Delta w$. After replacing $w$ by a sufficiently small nonzero multiple, we may also assume that $\widetilde f$ is positive. If $u_g$ solves $\Delta u_g=f$ with boundary value $g$, then $\widetilde u_g=u_g+w$ solves $\Delta \widetilde u_g=\widetilde f$ with the same boundary value $g$. Since $w$ is compactly supported in $\Omega$, one has $\p_\nu \widetilde u_g=\p_\nu u_g$ on $\p\Omega$. Thus, the full DN maps for $f$ and $\widetilde f$ agree, although the sources are different when $\Delta w$ is not identically zero. For this reason, the theorem begins at $k=2$.
	
	If $k=n$, then the equation is the Monge--Amp\`ere equation $\det D^2u=f$ on the convex admissible branch. Hence, the theorem below includes the Monge--Amp\`ere inverse source problem in all dimensions. In particular, when $n=2$, the range $2\le k\le n$ contains only $k=2$, so the theorem is exactly the two-dimensional Monge--Amp\`ere endpoint. The genuinely non-determinant cases covered by the theorem are precisely those with $n\ge3$ and $2\le k\le n-1$. 
	
	It is worth emphasizing that the parameter $y$ of the affine sections is not introduced as an additional boundary input. For a fixed plane $E$, the boundary value is only the one-parameter ray $t\phi_E|_{\p\Omega}$, $t\to\infty$. The different section parameters appear because the boundary response is observed pointwise on $\p\Omega$, and the portions $\p\Omega\cap(y+V)$ separate the affine sections. Thus, the large-data limit for a single fixed $E$ recovers all translates of the $q$-plane direction $V=E^\perp$. This is the sense in which the nonlinear boundary measurement produces affine section tomography of the source.
	
	We now state the main result, which answers Question \ref{Q:IP}. For a positive source $f\in C^\infty(\ol\Omega)$, the solution associated with the boundary value $t\phi_E$ will be denoted by $u_{t,E}^f$.
	
	\begin{theorem}\label{thm:main}
		Let $\Omega\subset\R^n$ be a bounded smooth uniformly convex domain, let $2\le k\le n$, and let $f_1,f_2\in C^\infty(\ol\Omega)$ satisfy $f_j\ge c_0>0$. For each $E\in\Gr(k-1,n)$ and each $t>0$, let $u_{j,t,E}$ be the unique smooth $k$-admissible solution of
		\begin{equation}\label{eq:main-forward}
			\begin{cases}
				\sigma_k(D^2u_{j,t,E})=f_j & \text{ in } \Omega, \\
				u_{j,t,E}=t\phi_E   &\text{ on } \p\Omega,
			\end{cases}
		\end{equation}
		for $j=1,2$. Assume that for every $E\in\Gr(k-1,n)$ there exists $t_E>0$ such that
		\begin{equation}\label{eq:main-DN-equality}
			\Lambda_{f_1}\big(t\phi_E|_{\p\Omega}\big)=\Lambda_{f_2}\big(t\phi_E|_{\p\Omega}\big)
			\quad \text{on }\p\Omega, 
		\end{equation}
		for all $t\ge t_E$. Then $f_1=f_2$ in $\Omega$.
	\end{theorem}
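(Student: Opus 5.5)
The plan is to derive, for each fixed $E$, the large-$t$ expansion \eqref{eq:intro-asymp} together with boundary normal derivative asymptotics. We then read off section integrals of $f_j$ from the DN equality \eqref{eq:main-DN-equality}, and conclude by injectivity of the affine $q$-plane Radon transform.

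\textbf{Step 1: the expansion in $L^\infty$.} Fix $E$, $V=E^\perp$, and write $u_{j,t,E}=t\phi_E+t^{1-k}w_j+r_{j,t}$, where $w_j=w_E^{f_j}$ is the sectionwise Dirichlet solution of \eqref{eq:section-poisson-intro}. Smoothness of $w_j$ on $\ol\Omega$, including through the glancing set $\mathcal G_E$, is the regularity result of Section~\ref{sec:Poisson_corrections}, which we take as given. Expanding $\sigma_k(tP_E+t^{1-k}D^2w)$ by multilinearity, the terms of order $t^{k-1}\cdot t^{1-k}$ give $D\sigma_k(P_E)[D^2w]=\tr(P_VD^2w)=\Delta_Vw$ by \eqref{eq:first_variation_id}. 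The remaining terms are $O(t^{k-2}t^{2(1-k)})=O(t^{-k})$, and $\sigma_k(tP_E)=0$ because $P_E$ has rank $k-1$. Hence $t\phi_E+t^{1-k}(w_j\pm\eps\psi)$, with a suitable strictly convex correction $\psi$ vanishing on $\p\Omega$ (for instance a multiple of a defining function), gives $k$-admissible sub- and supersolutions for $t$ large. Admissibility holds since $tP_E+t^{1-k}D^2(\cdot)$ is close to $tP_E$ plus a perturbation that is positive definite on $V$; this lies in $\Gk$ because $\sigma_k$ of it is positive and it is near the boundary of $\overline{\Gk}$ in the right direction. The comparison principle then yields $\|u_{j,t,E}-t\phi_E-t^{1-k}w_j\|_{L^\infty}=o(t^{1-k})$.

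\textbf{Step 2: normal derivatives on $\Gamma_E$.} This is the main obstacle. The operator linearized at $tP_E$ is degenerate: it is elliptic only in the $V$ directions at the relevant scale. So we cannot differentiate the $L^\infty$ expansion by interior or boundary Schauder theory. Near a point $x_0\in\Gamma_E$, I would build local barriers of the form
\[
t\phi_E+t^{1-k}\bigl(w_j\pm\eps(\rho+A\rho^2)\bigr)\pm\eps t^{1-k}\chi,
\]
where $\rho$ is a boundary defining function, chosen so that the barrier agrees with $t\phi_E$ on $\p\Omega$ near $x_0$. The $L^\infty$ bound from Step 1 dominates on the inner boundary of a small neighborhood. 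Transversality $|P_V\nu(x_0)|>0$ is what makes $\Delta_V\rho$-type terms strictly signed, so that the barriers are strict sub/supersolutions. Comparison then squeezes $\p_\nu u_{j,t,E}=t\p_\nu\phi_E+t^{1-k}\p_\nu w_j+o(t^{1-k})$ uniformly on compact subsets of $\Gamma_E$. The DN equality for all $t\ge t_E$ then gives $\p_\nu w_1=\p_\nu w_2$ on $\Gamma_E$.

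\textbf{Step 3: section integrals and Radon transform.} For $y\in\Omega_E$, the section $\Omega_{E,y}$ is a uniformly convex $q$-dimensional domain whose boundary lies in $\Gamma_E$, up to the measure-zero glancing endpoints. Its outward conormal within $y+V$ is $P_V\nu/|P_V\nu|$, so $\p_{\nu_V}w_j=\p_\nu w_j/|P_V\nu|$ because $w_j=0$ on $\p\Omega$. The divergence theorem on the section gives
\[
\int_{\Omega_{E,y}}f_j\,d\Hq=\int_{\p\Omega_{E,y}}\frac{\p_\nu w_j}{|P_V\nu|}\,d\mathcal H^{q-1}.
\]
Hence the section integrals of $f_1$ and $f_2$ coincide for every $y\in\Omega_E$; they trivially coincide (both zero) when the plane misses $\Omega$. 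Varying $E\in\Gr(k-1,n)$, equivalently $V\in\Gr(q,n)$, shows that the zero extensions $\tilde f_j\in L^1(\R^n)$ with compact support have the same affine $q$-plane Radon transform. The Fourier slice identity, $\widehat{\tilde f}(\xi)$ for $\xi\in E$ equal to the $E$-Fourier transform in $y$ of the section data, shows $\widehat{\tilde f_1}=\widehat{\tilde f_2}$ on $\bigcup_E E=\R^n$. Therefore $f_1=f_2$.
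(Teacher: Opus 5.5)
Your proposal is correct. It follows the same architecture as the paper: global comparison for an $L^\infty$ expansion, local barriers at non-glancing points for the normal derivative, the section flux identity, and the Fourier slice argument.

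\textbf{Where it differs: the barriers.} The paper builds both barriers from the section barrier $b_E$ ($\Delta_Vb_E=1$, $b_E=0$ on $\p\Omega$). Globally it uses $t\phi_E+t^{1-k}(w_E^f\pm C_*t^{-k}b_E)$. Locally it uses $G_{x_0}=-b_E+\alpha|x-x_0|^2$ at the scale $\eps_t=t^{-k/2}$, and the sectionwise Hopf lemma ($\p_\nu b_E>0$ on $\Gamma_E$) makes $G_{x_0}$ grow linearly into the interior. This gives explicit rates, $O(t^{1-2k})$ in $L^\infty$ and $O(t^{1-k-k/2})$ for $\p_\nu u_{t,E}^f$.

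You instead use any strictly convex $\psi\le0$ globally; a general defining function need not be convex, but e.g.\ $|x|^2-R^2$ works. Locally you use the classical Hopf barrier $\rho+A\rho^2$ plus a localizer. Transversality then enters directly through $\Delta_V(\rho^2)=2|P_V\nabla\rho|^2+2\rho\,\Delta_V\rho$. You also fix $\eps$ before letting $t\to\infty$. On the inner boundary it is the offset $c\eps t^{1-k}$ that dominates the $o(t^{1-k})$ remainder, not the other way round as your wording suggests.

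Your route avoids $b_E$ altogether: its smoothness through $\mathcal G_E$, its bounds, and its Hopf lemma are all unnecessary. The cost is a purely qualitative $o(t^{1-k})$ error, which is all that uniqueness needs. The smoothness of $w_E^{f_j}$ through the glancing set is still essential in both versions, because the global comparison needs $D^2w_E^{f_j}$ bounded on all of $\ol\Omega$.

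\textbf{Slips to fix.}
\begin{enumerate}
\item Since $\nabla w_j=(\p_\nu w_j)\nu$ on $\p\Omega$, one has $\p_{\nu_V}w_j=\nabla w_j\cdot P_V\nu/|P_V\nu|=|P_V\nu|\,\p_\nu w_j$, not $\p_\nu w_j/|P_V\nu|$. This does not affect uniqueness, because the weight is independent of $f_j$, but it does matter for the reconstruction formula.
\item $D^2(w_j\pm\eps\psi)$ is not positive definite on $V$; only its $V$-trace is positive. Admissibility should be argued via $\sigma_i(tP_E+t^{1-k}H)=\binom{k-1}{i}t^i+O(t^{i-k})>0$ for $i<k$, together with $\sigma_k=\tr(P_VH)+O(t^{-k})>0$.
\item Nondegenerate sections have no glancing boundary points at all, by Lemma~\ref{lem:section-geometry}. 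Your ``measure-zero glancing endpoints'' caveat is therefore unnecessary.
\item Once the localizer $\chi$ is included, your local barriers are only ordered with respect to $t\phi_E$ on $\p D\cap\p\Omega$, with equality at $x_0$. They do not agree with $t\phi_E$ there.
\end{enumerate}
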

	
	Thus, only the values of the nonlinear DN maps on the restricted large-data family 
	\begin{equation*}
		\left\{t\phi_E|_{\p\Omega}: E\in\Gr(k-1,n), t\ge t_E\right\}
	\end{equation*}
	are used. The threshold for the largeness of $t$ may depend on $E$; no single lower bound for $t$ is needed uniformly over all $E\in\Gr(k-1,n)$. The same large-data limits also give the exact reconstruction formula in Corollary \ref{cor:reconstruction}.
	
	\begin{remark}[Size of the data set]\label{rem:size-data-set}
		The measurements used in Theorem \ref{thm:main} are much smaller than the full nonlinear DN map. For each fixed $E\in\Gr(k-1,n)$, the proof only uses the one-parameter large ray $\left\{t\phi_E|_{\p\Omega}: t\ge t_E\right\}$. The full family is obtained by letting $E$ vary in the finite-dimensional Grassmannian $\Gr(k-1,n)$. The measurements form a finite-dimensional large-data family, not an open subset of $C^\infty(\p\Omega)$.
		
		The section parameter $y\in E$ is not prescribed through additional boundary data. Instead, for a fixed $E$, the large-$t$ boundary response determines the boundary normal derivative of the correction $w_E^f$ on the boundary portions $\p\Omega\cap(y+V)$ for all sections $\Omega\cap(y+V)$ with nonempty interior. Integrating this boundary information over $\p\Omega\cap(y+V)$ gives $\int_{\Omega\cap(y+V)} f\,d\mathcal H^q$. 
		
		For sections with empty or degenerate intersections, the corresponding section integral is zero. Hence, one large ray for each $E$ recovers all translates in the corresponding direction $V=E^\perp$, and varying $E$ gives the full affine $q$-plane Radon transform.
	\end{remark}

    We also record the resulting reconstruction formula. Here, the notation is prepared to keep the statement concise. Let $q=n-k+1$. For a positive source $f$ whose restricted large-data DN map is known, define the following quantities. If $E\in\Gr(k-1,n)$, set $V=E^\perp$ and use the non-glancing set $\Gamma_E$ defined in \eqref{eq:non-glancing}. By Lemma \ref{lem:section-geometry}, for every $y\in P_E(\Omega)$ one has
    \[
    \p\Omega\cap(y+V)\subset \Gamma_E.
    \]
    Hence, the large-data DN limit from Corollary \ref{cor:DN-limit} can be used on the boundary of each nondegenerate section. Define
    \begin{equation}\label{eq:reconstruction-NE}
    	N_E^f(x):=\lim_{t\to\infty}
    	t^{k-1}\left(\Lambda_f(t\phi_E|_{\p\Omega})(x)-t\p_\nu\phi_E(x)\right),
    	\quad x\in\Gamma_E.
    \end{equation}
    Then $N_E^f=\p_\nu w_E^f$ on $\Gamma_E$.
    
    For $V\in\Gr(q,n)$, put $E=V^\perp$. Motivated by the section flux identity in Lemma \ref{lem:flux}, define the recovered section data by
    \begin{equation}\label{eq:reconstruction-section-data}
    	\mathcal S_f(V,y):=
    	\int_{\p\Omega\cap(y+V)}
    	|P_V\nu(x)|\,N_E^f(x)\,dS_{q-1}(x),
    	\quad y\in E,
    \end{equation}
    when $y\in P_E(\Omega)$, and set $\mathcal S_f(V,y)=0$ otherwise. The proof below shows that $\mathcal S_f(V,y)$ is the affine section integral of the zero extension of $f$.
    
    Finally, let $F=f\mathbf 1_\Omega$ be the zero extension of $f$ to $\R^n$, where $\mathbf 1_\Omega$ is the characteristic function of $\Omega$. We use the Fourier transform convention
    \begin{equation*}
    	\widehat F(\xi)=\int_{\R^n}e^{-\mathsf{i}x\cdot\xi}F(x)\,dx.
    \end{equation*}
	
	\begin{corollary}[Reconstruction formula]\label{cor:reconstruction}
		Let $\Omega\subset\R^n$ be a bounded smooth uniformly convex domain, let $2\le k\le n$, and let $f\in C^\infty(\ol\Omega)$ satisfy $f\ge c_0>0$. Assume that, for every $E\in\Gr(k-1,n)$, the values $\Lambda_f(t\phi_E|_{\p\Omega})$ are known for all sufficiently large $t$. Then, for each $\xi\in\R^n\setminus\{0\}$ and any choice of $q$-plane $V_\xi\subset\xi^\perp$, one has
		\begin{equation}\label{eq:reconstruction-Fourier}
			\widehat F(\xi)=\int_{V_\xi^\perp}e^{-\mathsf{i}y\cdot\xi}\mathcal S_f(V_\xi,y)\,dy.
		\end{equation}
				The right-hand side is independent of the choice of $V_\xi$. Moreover,
		\begin{equation}\label{eq:reconstruction-inverse-Fourier}
			F=\mathcal F^{-1}\widehat F
			\quad \text{in } \mathcal S'(\R^n),
		\end{equation}
		and hence $f=F|_\Omega$. In other words, for any measurable choice
		$\xi\mapsto V_\xi\in\Gr(q,n)$ with $V_\xi\subset\xi^\perp$ for $\xi\ne0$,
		\begin{equation}\label{eq:reconstruction-pointwise}
			f(x)=(2\pi)^{-n}\int_{\R^n}e^{\mathsf{i}x\cdot\xi}\bigg[\int_{V_\xi^\perp}e^{-\mathsf{i}y\cdot\xi}
			\mathcal S_f(V_\xi,y)\,dy\bigg]\,d\xi,\quad x\in\Omega,
		\end{equation}
		where the value at $\xi=0$ is immaterial and the inverse Fourier transform is understood in the standard distributional sense.
	\end{corollary}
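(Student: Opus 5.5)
The proof of Corollary \ref{cor:reconstruction} rests on one identification: $\mathcal S_f(V,y)$ equals the affine section integral $\int_{y+V}F\,d\mathcal H^q$ for every $y\in E=V^\perp$. Granting this, the Fourier slice identity and Fourier inversion in $\mathcal S'$ give the formulas.

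\textbf{Step 1: identify $N_E^f$.} Fix $E\in\Gr(k-1,n)$ and set $V=E^\perp$. By Corollary \ref{cor:DN-limit}, the limit \eqref{eq:reconstruction-NE} exists on $\Gamma_E$ and equals $\p_\nu w_E^f$ there. Here $w_E^f$ is the smooth fiberwise Poisson correction solving \eqref{eq:section-poisson-intro}, which is smooth on $\ol\Omega$ by the results of Section \ref{sec:Poisson_corrections}.

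\textbf{Step 2: compute the section flux.} For $y\in P_E(\Omega)$ the section $\Omega_{E,y}$ is a convex open subset of $y+V$ with nonempty interior. By Lemma \ref{lem:section-geometry}, its relative boundary $\p\Omega\cap(y+V)$ lies in $\Gamma_E$, so it is a smooth $(q-1)$-manifold. Its outward conormal inside $y+V$ is $P_V\nu/|P_V\nu|$. Since $w_E^f$ vanishes on $\p\Omega$, we have $\nabla w_E^f=(\p_\nu w_E^f)\,\nu$ on $\p\Omega$. Hence the conormal derivative is $\nabla_V w\cdot P_V\nu/|P_V\nu|=|P_V\nu|\,\p_\nu w_E^f$. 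The divergence theorem in $y+V$ applied to $\Delta_V w_E^f=f$ then gives
\[
\int_{\Omega_{E,y}}f\,d\Hq=\int_{\p\Omega\cap(y+V)}|P_V\nu|\,\p_\nu w_E^f\,dS_{q-1}=\mathcal S_f(V,y).
\]
This is the content of Lemma \ref{lem:flux}, which I would invoke directly.

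\textbf{Step 3: the remaining values of $y$.} For $y\notin P_E(\Omega)$, the plane $y+V$ meets $\ol\Omega$ at most in a boundary set of $\mathcal H^q$-measure zero, because $\Omega$ is convex. So both sides vanish and $\mathcal S_f(V,y)=\int_{y+V}F\,d\Hq$ for all $y\in E$.

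\textbf{Step 4: Fourier slice.} Fix $\xi\ne0$ and $V=V_\xi\subset\xi^\perp$, and write $x=y+z$ with $y\in V^\perp$ and $z\in V$. Then $z\cdot\xi=0$, so $x\cdot\xi=y\cdot\xi$. Since $F\in L^1$ with compact support, Fubini applies:
\[
\widehat F(\xi)=\int_{V^\perp}e^{-\mathsf i y\cdot\xi}\int_V F(y+z)\,dz\,dy=\int_{V^\perp}e^{-\mathsf i y\cdot\xi}\,\mathcal S_f(V,y)\,dy.
\]
This proves \eqref{eq:reconstruction-Fourier}. The right-hand side is independent of the choice of $V_\xi$ because the left-hand side does not involve it.

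\textbf{Step 5: inversion.} Since $F\in L^1\cap L^2$, we have $\widehat F\in L^\infty\cap L^2$. Inverse Fourier transform in $\mathcal S'$ returns $F$, giving \eqref{eq:reconstruction-inverse-Fourier}. The integrand of \eqref{eq:reconstruction-pointwise} depends only on $\widehat F(\xi)$, so measurability of $\xi\mapsto V_\xi$ is needed only to make the double integral meaningful. Its value is $\widehat F(\xi)$ regardless of the choice, and $\{\xi=0\}$ is a null set. Restricting to $\Omega$ recovers $f$, and since $f$ is continuous the identity holds pointwise there.

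\textbf{Expected obstacle.} The only step needing care is Step 2: matching the boundary measure and conormal factor $|P_V\nu|$ precisely, and confirming that $\p_\nu w_E^f$ is known on all of $\p\Omega\cap(y+V)$. The latter is exactly what Lemma \ref{lem:section-geometry} and Corollary \ref{cor:DN-limit} supply. The rest is standard Fourier analysis.
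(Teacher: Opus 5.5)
Your proposal is correct and follows essentially the same route as the paper. You identify $N_E^f=\p_\nu w_E^f$ on $\Gamma_E$ via Corollary~\ref{cor:DN-limit}, turn this into section integrals with Lemma~\ref{lem:flux} (using Lemma~\ref{lem:section-geometry}\ref{item_1_sec_geo} to place each nondegenerate section boundary in $\Gamma_E$), dispose of the remaining $y$, and finish with the Fourier slice identity and Fourier inversion in $\mathcal S'(\R^n)$.

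One small correction in Step~3: convexity alone does not make $\ol\Omega\cap(y+V)$ $\Hq$-null, since a convex body can have flat $q$-dimensional boundary pieces. Either cite strict convexity as in Lemma~\ref{lem:section-geometry}\ref{item_2_sec_geo}, or, more simply, note that $F=f\mathbf 1_\Omega$ vanishes off the open set $\Omega$, so $\Rq F(V,y)=0$ whenever $(y+V)\cap\Omega=\emptyset$.
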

	
	\begin{corollary}[Powers of the $k$-Hessian]\label{cor:power}
		Let $\theta>0$. Consider the equation
		\begin{equation}\label{eq:power-equation}
			\sigma_k(D^2u)^\theta=f(x)
		\end{equation}
		on the $k$-admissible branch. Let $f_1,f_2\in C^\infty(\ol\Omega)$ satisfy $f_j\ge c_0>0$. If the nonlinear DN maps for \eqref{eq:power-equation} agree on the same restricted large-data family as in Theorem \ref{thm:main}, then $f_1=f_2$ in $\Omega$.
	\end{corollary}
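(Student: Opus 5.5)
The plan is to reduce Corollary \ref{cor:power} to Theorem \ref{thm:main} by a change of source. On the $k$-admissible branch we have $\sigma_k(D^2u)>0$. Since $\theta>0$, the map $s\mapsto s^\theta$ is a smooth increasing bijection of $(0,\infty)$. Hence $\sigma_k(D^2u)^\theta=f$ holds if and only if
\begin{equation*}
\sigma_k(D^2u)=f^{1/\theta}.
\end{equation*}
We therefore set $\tilde f_j:=f_j^{1/\theta}$ for $j=1,2$.

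\textbf{Step 1: the new sources satisfy the hypotheses.} Because $f_j\in C^\infty(\ol\Omega)$ with $f_j\ge c_0>0$, we get $\tilde f_j\in C^\infty(\ol\Omega)$. Indeed, $s\mapsto s^{1/\theta}$ is smooth on $[c_0,\infty)$, which contains the range of $f_j$. We also get the lower bound $\tilde f_j\ge c_0^{1/\theta}>0$. So Theorem \ref{thm:main} applies to the pair $\tilde f_1,\tilde f_2$, with $c_0$ replaced by $c_0^{1/\theta}$.

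\textbf{Step 2: the forward and DN problems coincide.} For every boundary value $t\phi_E|_{\p\Omega}$, the unique smooth $k$-admissible solution of \eqref{eq:power-equation} with source $f_j$ is the same function as the unique smooth $k$-admissible solution of \eqref{eq:main-forward} with source $\tilde f_j$. Existence and uniqueness come from Section \ref{sec:forward} via \cite{CNS_nonlinear_Hessian}; the equivalence is the pointwise one above, valid because admissibility forces $\sigma_k>0$. Consequently, on the whole restricted large-data family, the DN map of \eqref{eq:power-equation} with source $f_j$ equals $\Lambda_{\tilde f_j}$.

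\textbf{Step 3: conclude.} The hypothesis then gives \eqref{eq:main-DN-equality} for $\tilde f_1,\tilde f_2$ for every $E\in\Gr(k-1,n)$ and all $t\ge t_E$. Theorem \ref{thm:main} yields $\tilde f_1=\tilde f_2$ in $\Omega$. Since $s\mapsto s^\theta$ is injective on $(0,\infty)$, raising to the power $\theta$ gives $f_1=f_2$.

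There is no real obstacle. The only point needing care is Step 2: we must confirm that the admissible solution class for \eqref{eq:power-equation} is defined as the $k$-admissible branch, so that the positivity of $\sigma_k$ makes the two equations literally equivalent and the forward solutions, hence the DN maps, coincide.
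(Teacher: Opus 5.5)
Your proposal is correct and follows the paper's proof: on the $k$-admissible branch $\sigma_k(D^2u)>0$, so the powered equation is equivalent to $\sigma_k(D^2u)=f^{1/\theta}$, the DN maps coincide, Theorem~\ref{thm:main} gives $f_1^{1/\theta}=f_2^{1/\theta}$, and injectivity of $s\mapsto s^\theta$ gives $f_1=f_2$. Your Step 1, checking that $f_j^{1/\theta}\in C^\infty(\ol\Omega)$ with $f_j^{1/\theta}\ge c_0^{1/\theta}>0$, makes explicit a hypothesis of Theorem~\ref{thm:main} that the paper uses without stating it.
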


	\para{Organization of the paper}
    The paper is organized as follows. Section~\ref{sec:forward} recalls the forward theory, the admissible branch, and the nonlinear DN map. Section~\ref{sec:Poisson_corrections} constructs the fiberwise Poisson correction and proves its smoothness through glancing points. Section~\ref{sec:asymp} proves the large-data asymptotics by global and local barriers. Section~\ref{sec:recovery} recovers the source by the affine Radon transform and proves both the reconstruction formula and the power corollary.

	\section{The forward problem and admissibility}\label{sec:forward}
	
	We recall the basic facts used below. The admissible cone $\Gk$, defined in \eqref{eq:Gk-intro}, is an open convex cone containing the positive orthant $\{\lambda\in\R^n:\lambda_i>0 \text{ for all } i\}$. The ellipticity of $\sigma_k$ on $\Gk$ means that, if $M\in\Gk$, then the Newton tensor
	\begin{equation}\label{eq:newton-tensor}
		T_{k-1}^{ij}(M)=\frac{\p\sigma_k}{\p M_{ij}}(M)
	\end{equation}
	is positive definite. The concavity of $\sigma_k^{1/k}$ on $\Gk$ will only be used through the standard comparison principle.
	
	\begin{lemma}\label{lem:comparison}
		Let $D\subset\R^n$ be a bounded domain. Let $u,v\in C^2(\ol D)$ satisfy $D^2u(x),D^2v(x)\in\Gk$ for every $x\in\ol D$. Suppose that $u\le v$ on $\p D$ and
		\begin{equation}\label{eq:comparison-assumption}
			\sigma_k(D^2u)\ge \sigma_k(D^2v) \quad \text{in } D.
		\end{equation}
		Then $u\le v$ in $D$.
	\end{lemma}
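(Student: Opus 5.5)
The plan is the standard argument by contradiction with a strict perturbation. Both $u$ and $v$ are admissible on $\ol D$ and we will use the ellipticity of $\sigma_k$ on $\Gk$ through the mean value theorem along the segment between $D^2u$ and $D^2v$. That segment stays in $\Gk$ because $\Gk$ is convex.

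First I will make the inequality strict. For $\eps>0$ set
\[
u_\eps(x)=u(x)+\eps\left(\tfrac12|x-x_0|^2-R^2\right),
\]
where $x_0$ is fixed and $R$ is large enough that $\tfrac12|x-x_0|^2\le R^2$ on $\ol D$. Then $u_\eps\le u\le v$ on $\p D$. Moreover $D^2u_\eps=D^2u+\eps I$, which lies in $\Gk$ since $\Gk+\Gamma_n\subset\Gk$; this holds because $\Gk$ is a convex cone containing the positive orthant. Next, $\sigma_k(D^2u+\eps I)>\sigma_k(D^2u)$. To see this, note that the derivative in $s$ of $\sigma_k(D^2u+sI)$ equals $\tr T_{k-1}(D^2u+sI)$, which is positive since the Newton tensor is positive definite on $\Gk$, and the whole path stays in $\Gk$. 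Hence $\sigma_k(D^2u_\eps)>\sigma_k(D^2v)$ in $D$. It suffices to prove $u_\eps\le v$ and then let $\eps\to0$.

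Suppose instead that $u_\eps-v$ has a positive maximum. Since $u_\eps\le v$ on $\p D$, the maximum is attained at an interior point $x_1$. There $D^2u_\eps(x_1)\le D^2v(x_1)$ in the matrix order. Set $A=D^2v(x_1)$ and $B=D^2u_\eps(x_1)$, both in $\Gk$, and write $H=A-B\ge0$. Along $M_s=B+sH$ for $s\in[0,1]$ we have $M_s\in\Gk$ by convexity. This gives
\[
\sigma_k(A)-\sigma_k(B)=\int_0^1\tr\big(T_{k-1}(M_s)H\big)\,ds\ge0,
\]
because $T_{k-1}(M_s)$ is positive definite and $H$ is positive semidefinite. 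Therefore $\sigma_k(D^2v(x_1))\ge\sigma_k(D^2u_\eps(x_1))$, which contradicts the strict inequality. So $u_\eps\le v$ in $D$, and letting $\eps\to0$ gives $u\le v$.

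The only delicate point is the algebraic input that $\Gk$ is a convex cone with $\Gk+\Gamma_n\subset\Gk$ and with positive definite $T_{k-1}$ on it. These are the standard facts recalled just before the lemma (Caffarelli--Nirenberg--Spruck), and I take them as given. The $C^2(\ol D)$ hypothesis ensures that the maximum is attained and that the Hessians are admissible at interior points, so no viscosity argument is needed.
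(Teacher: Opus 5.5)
Your proof is correct, but it takes a slightly different route from the paper.

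The paper linearizes globally. It writes $\sigma_k(D^2u)-\sigma_k(D^2v)=a^{ij}\p_{ij}(u-v)$ with $a^{ij}(x)=\int_0^1\frac{\p\sigma_k}{\p M_{ij}}\big(D^2v+s(D^2u-D^2v)\big)\,ds$. Convexity of $\Gk$ and compactness of the set of segment matrices over $\ol D\times[0,1]$ give uniform positive definiteness of $(a^{ij})$. The paper then quotes the weak maximum principle for the linear operator $a^{ij}\p_{ij}$.

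You never build a global linear operator. You first make the inequality strict with the perturbation $\eps(\tfrac12|x-x_0|^2-R^2)$, using $\Gk+\Gamma_n\subset\Gk$ and $\frac{d}{ds}\sigma_k(M+sI)=\tr T_{k-1}(M+sI)>0$. You then use the matrix-order monotonicity of $\sigma_k$ on $\Gk$ at a single interior maximum point. In effect you run the proof of the weak maximum principle directly on the nonlinear operator, with your quadratic perturbation playing the role of the auxiliary function $e^{\gamma x_1}$ in the linear proof.

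Your version is self-contained and needs only positive definiteness of the Newton tensor along one segment at one point. It uses no compactness, no uniform ellipticity, and no linear theory. The paper's version is shorter given the textbook maximum principle. Its linearization identity is also the same mechanism as the implicit function theorem argument used for the forward problem.
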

	
	\begin{proof}
		Set $w=u-v$. Since $\Gk$ is convex, for every $x\in\ol D$ and every $0\le s\le1$,
		\[
		M_s(x)=D^2v(x)+s(D^2u(x)-D^2v(x))
		\]
		belongs to $\Gk$. The set $\{M_s(x):x\in\ol D,\ 0\le s\le1\}$ is compact and contained in $\Gk$. Therefore, the Newton tensor is uniformly positive definite along this set.
		
		By the fundamental theorem of calculus,
		\begin{equation}\label{eq:comparison-linearization}
			\sigma_k(D^2u)-\sigma_k(D^2v)=a^{ij}\p_{ij}w,
		\end{equation}
		where $a^{ij}(x)=\int_0^1\frac{\p\sigma_k}{\p M_{ij}}(M_s(x))\,ds$. The matrix $(a^{ij})$ is uniformly positive definite on $\ol D$. By \eqref{eq:comparison-assumption}, $a^{ij}\p_{ij}w\ge0$ in $D$. The weak maximum principle gives
		\[
		\sup_D w\le \sup_{\p D}w\le0.
		\]
		This proves $u\le v$ in $D$.
	\end{proof}
	
	The next result gives the forward solvability used in Theorem \ref{thm:main}.
	
	\begin{proposition}\label{prop:CNS-forward}
		Let $\Omega\subset\R^n$ be a bounded smooth uniformly convex domain, let $2\le k\le n$, and let $f\in C^\infty(\ol\Omega)$ satisfy $f\ge c_0>0$. For every $g\in C^\infty(\p\Omega)$, the Dirichlet problem
		\begin{equation}\label{eq:CNS-forward}
			\begin{cases}
				\sigma_k(D^2u)=f & \text{in } \Omega,\\
				u=g & \text{on } \p\Omega
			\end{cases}
		\end{equation}
		has a unique smooth $k$-admissible solution $u\in C^\infty(\ol\Omega)$. Since $f\ge c_0>0$, the solution satisfies $D^2u(x)\in\Gk$ for every $x\in\ol\Omega$.
		
		Moreover, the solution map is locally smooth at the boundary value. More precisely, if $g_0\in C^\infty(\p\Omega)$ and $u_0$ is the corresponding solution, then for every integer $m\ge2$ and every $\alpha\in(0,1)$ there are neighborhoods $\mathcal U$ of $g_0$ in $C^{m+2,\alpha}(\p\Omega)$ and $\mathcal V$ of $u_0$ in $C^{m+2,\alpha}(\ol\Omega)$ such that $\mathcal U\ni g\mapsto u_g\in\mathcal V$ is a smooth map.
	\end{proposition}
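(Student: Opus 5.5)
The plan is to reduce Proposition \ref{prop:CNS-forward} to the Caffarelli--Nirenberg--Spruck theory \cite{CNS_nonlinear_Hessian}, and then obtain local smooth dependence from the implicit function theorem.

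\emph{Existence.} For existence and regularity, I would invoke the main theorem of \cite{CNS_nonlinear_Hessian} for $F(D^2u)=\sigma_k(D^2u)^{1/k}$ on the cone $\Gk$. Its hypotheses hold in our setting:
\begin{enumerate}[(i)]
\item $\Gk$ is an open convex symmetric cone with vertex at the origin, containing the positive cone and contained in $\{\sum\lambda_i>0\}$;
\item $\sigma_k^{1/k}$ is concave, positive and symmetric on $\Gk$, vanishes on $\p\Gk$, and satisfies $\p\sigma_k/\p\lambda_i>0$ there;
\item the right-hand side $f^{1/k}\in C^\infty(\ol\Omega)$ is bounded below by $c_0^{1/k}>0$;
\item $\p\Omega$ is uniformly convex, so the boundary condition of \cite{CNS_nonlinear_Hessian} (the principal curvatures $\kappa$ of $\p\Omega$ satisfy $(\kappa,R)\in\Gk$ for large $R$) holds, because every $\kappa_i>0$.
\end{enumerate}
The CNS theorem then gives, for $g\in C^\infty(\p\Omega)$, a unique admissible solution $u\in C^\infty(\ol\Omega)$. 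The standard route is a priori $C^{2,\alpha}$ estimates, including the boundary double-normal estimate where uniform convexity enters, followed by the method of continuity; here this is only cited.

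\emph{Admissibility and uniqueness.} Admissibility up to the boundary holds because $u\in C^2(\ol\Omega)$ and $\sigma_k(D^2u)=f\ge c_0>0$ on $\ol\Omega$. Thus $\lambda(D^2u(x))$ lies in the closure of $\Gk$ and has $\sigma_k>0$, which places it in the component $\Gk$ by the standard characterization of $\Gk$ as the connected component of $\{\sigma_k>0\}$ containing the positive cone. Uniqueness among $C^2(\ol\Omega)$ admissible solutions then follows by applying Lemma \ref{lem:comparison} in both directions.

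\emph{Smooth dependence.} I would define
\[
\Phi: C^{m+2,\alpha}(\ol\Omega)\times C^{m+2,\alpha}(\p\Omega)\to C^{m,\alpha}(\ol\Omega)\times C^{m+2,\alpha}(\p\Omega),
\qquad
\Phi(u,g)=\bigl(\sigma_k(D^2u)-f,\ u|_{\p\Omega}-g\bigr).
\]
This map is smooth, being polynomial in $D^2u$, and is defined on the open set where $D^2u\in\Gk$ on $\ol\Omega$; the set is open because $m\ge2$ gives a $C^0$ bound on $D^2u$ and $\ol\Omega$ is compact. The partial derivative at $(u_0,g_0)$ is
\[
h\mapsto \bigl(T_{k-1}^{ij}(D^2u_0)\p_{ij}h,\ h|_{\p\Omega}\bigr).
\]
Its coefficients are $C^\infty$ and uniformly elliptic on $\ol\Omega$ by the positive-definiteness of the Newton tensor on the compact set $\{D^2u_0(x)\}\subset\Gk$, and it has no zeroth-order term. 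By the maximum principle it is injective, and by Schauder theory it is surjective from $C^{m+2,\alpha}$ onto $C^{m,\alpha}\times C^{m+2,\alpha}(\p\Omega)$, hence an isomorphism. The implicit function theorem then yields neighborhoods $\mathcal U$, $\mathcal V$ and a smooth map $g\mapsto u_g$ with $\Phi(u_g,g)=0$. This $u_g$ is admissible, since $\mathcal V$ is chosen inside the admissible open set. By uniqueness, for smooth $g$ it coincides with the CNS solution.

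The main obstacle is the existence part, specifically the boundary $C^2$ estimate, but it is entirely borrowed from \cite{CNS_nonlinear_Hessian}. The only genuine checks are that uniform convexity of $\p\Omega$ supplies the CNS boundary condition and that admissibility holds up to $\ol\Omega$.
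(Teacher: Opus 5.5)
Your proposal is correct and follows essentially the same route as the paper. You cite Caffarelli--Nirenberg--Spruck for existence, uniqueness and smoothness up to the boundary (uniform convexity supplies the boundary curvature condition), show that the Hessian stays in $\Gk$ on all of $\ol\Omega$, and apply the implicit function theorem using the uniformly elliptic linearization $T_{k-1}^{ij}(D^2u_0)\p_{ij}$. The differences are only in packaging. You get admissibility on $\p\Omega$ from the characterization of $\Gk$ as the component of $\{\sigma_k>0\}$ containing the positive cone, where the paper uses the Newton--Maclaurin bound $\sigma_j\ge c\,\sigma_k^{j/k}$ on $\ol{\Gk}$. You also put the trace into the target space of the implicit-function map, where the paper reduces to zero-trace unknowns via an extension operator.
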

	
	\begin{proof}
		The existence, uniqueness, and boundary smoothness follow from the Dirichlet theory for Hessian equations on uniformly $(k-1)$-convex domains \cite[Theorem 2]{CNS_nonlinear_Hessian}. In the notation of Caffarelli--Nirenberg--Spruck, the boundary condition required for the $k$-Hessian equation is the uniform $(k-1)$-convexity of $\p\Omega$. A uniformly convex domain has all principal curvatures bounded below by a positive constant, and hence it is uniformly $(k-1)$-convex for every $2\le k\le n$. Since $f\in C^\infty(\ol\Omega)$ satisfies $f\ge c_0>0$ and $g\in C^\infty(\p\Omega)$, the theorem gives a unique smooth admissible solution in $\Omega$. Smoothness up to $\p\Omega$ follows from the boundary regularity in the same theorem and the standard bootstrapping for the uniformly elliptic equation along the admissible solution.
		
		We prove the local smooth dependence by the implicit function theorem. Fix $g_0$ and let $u_0$ be the corresponding solution. The solution is smooth up to the boundary and is $k$-admissible in $\Omega$. Since $D^2u_0$ is continuous on $\ol\Omega$, the boundary limiting eigenvalues are contained in $\ol\Gk$. Also, by continuity of the equation,
		\[
		\sigma_k(D^2u_0)=f\ge c_0>0
		\quad \text{on } \ol\Omega.
		\]
		We now use the Newton--Maclaurin inequalities. For completeness, we recall how the needed Newton--Maclaurin inequality follows from the standard concavity of $E_\ell^{1/\ell}$ on $\Gamma_\ell$. Set
		\[
		E_j(\lambda):=\frac{\sigma_j(\lambda)}{\binom{n}{j}}.
		\]
		We first prove the adjacent inequality. Let $2\le \ell\le k$ and define $\Phi_\ell(\lambda):=E_\ell(\lambda)^{1/\ell}$, then the function $\Phi_\ell$ is concave and homogeneous of degree one on $\Gamma_\ell$. For $\lambda\in\Gk\subset\Gamma_\ell$, concavity at $\lambda$ with comparison point $\mathbf 1=(1,\ldots,1)$ gives
		\begin{equation}\label{eq:Phi_ell_inequality}
			1=\Phi_\ell(\mathbf 1)\le \Phi_\ell(\lambda)+D_\lambda\Phi_\ell(\lambda)[\mathbf 1-\lambda].
		\end{equation}
		Since $\Phi_\ell$ is homogeneous of degree one, Euler's identity for homogeneous functions gives $D_\lambda\Phi_\ell(\lambda)[\lambda]=\Phi_\ell(\lambda)$. Indeed, this follows by differentiating
		$\Phi_\ell(s\lambda)=s\Phi_\ell(\lambda)$ with respect to $s$ at $s=1$. Using the algebraic identity $\sum_{i=1}^n\frac{\partial\sigma_\ell(\lambda)}{\partial\lambda_i}=(n-\ell+1)\sigma_{\ell-1}(\lambda)$ and \eqref{eq:Phi_ell_inequality}, we can compute
		\[
		1=\Phi_\ell(\mathbf 1)\le D_\lambda\Phi_\ell(\lambda)[\mathbf 1]=	\frac1\ell\binom{n}{\ell}^{-1/\ell}		\sigma_\ell(\lambda)^{1/\ell-1}(n-\ell+1)\sigma_{\ell-1}(\lambda).
		\]
		Since $\frac{n-\ell+1}{\ell}=\frac{\binom{n}{\ell}}{\binom{n}{\ell-1}}$, this becomes $D_\lambda\Phi_\ell(\lambda)[\mathbf 1]=E_{\ell-1}(\lambda)E_\ell(\lambda)^{-(\ell-1)/\ell}$. Therefore, $E_{\ell-1}(\lambda)\ge E_\ell(\lambda)^{(\ell-1)/\ell}$, or, $E_{\ell-1}(\lambda)^{1/(\ell-1)}\ge E_\ell(\lambda)^{1/\ell}$. Iterating this adjacent inequality gives
		\begin{equation}\label{eq:NM-ineq}
			\bigg(\frac{\sigma_j(\lambda)}{\binom{n}{j}}\bigg)^{1/j}\ge
			\bigg(\frac{\sigma_\ell(\lambda)}{\binom{n}{\ell}}\bigg)^{1/\ell},
			\quad 1\le j<\ell\le k,
		\end{equation}
		for $\lambda\in\Gk$. If $\lambda\in\ol\Gk$, we apply the inequality to $\lambda+s\mathbf 1\in\Gk$ and let $s\downarrow0$. Thus, \eqref{eq:NM-ineq} holds for every $\lambda\in\ol\Gk$.
		
		Taking $\ell=k$ in \eqref{eq:NM-ineq}, we get $\sigma_j(\lambda)\ge \binom{n}{j}\big(\frac{\sigma_k(\lambda)}{\binom{n}{k}}\big)^{j/k}$ for $1\le j\le k-1$. Applying this with $\lambda=\lambda(D^2u_0(x))$ gives $\sigma_j(D^2u_0(x))>0$, $1\le j\le k-1$, $x\in\ol\Omega$, because $\sigma_k(D^2u_0(x))=f(x)\ge c_0>0$. Hence, the eigenvalues of $D^2u_0$ actually remain in $\Gk$ on $\ol\Omega$. By compactness of $\ol\Omega$,
		\begin{equation}\label{eq:compact-admissible-set}
			\{\lambda(D^2u_0(x)):x\in\ol\Omega\}
		\end{equation}
		is contained in a compact subset of $\Gk$.
		
		Let $\gamma:C^{m+2,\alpha}(\ol\Omega)\to C^{m+2,\alpha}(\p\Omega)$ with $\gamma U=U|_{\p\Omega}$ be the trace map. By the standard extension theorem for H\"older spaces on smooth domains, there exists a bounded linear operator $\mathcal E:C^{m+2,\alpha}(\p\Omega)\to C^{m+2,\alpha}(\ol\Omega)$ such that $\gamma(\mathcal E h)=h$, for every $h\in C^{m+2,\alpha}(\p\Omega)$. Thus, $\mathcal E$ is a linear right inverse of the trace map.
		
		For $g$ close to $g_0$, write
		\begin{equation}\label{eq:IFT-parametrization}
			u=u_0+\mathcal E(g-g_0)+w, \quad w\in X,
		\end{equation}
		where $X=\{w\in C^{m+2,\alpha}(\ol\Omega):w|_{\p\Omega}=0\}$. This parametrization fixes the boundary condition, and $u|_{\p\Omega}=g_0+(g-g_0)+0=g$. Moreover, if $u$ has boundary value $g$, then
		\begin{equation}\label{eq:w-zero-trace}
			w=u-u_0-\mathcal E(g-g_0)
		\end{equation}
		has zero trace. Hence, the Dirichlet problem near $u_0$ is equivalent to solving for the zero-trace unknown $w$.
		
		Define
		\begin{equation}\label{eq:IFT-map}
			\mathcal F(w,g)
			=
			\sigma_k(D^2(u_0+\mathcal E(g-g_0)+w))-f.
		\end{equation}
		This is a smooth map from a neighborhood of $(0,g_0)$ in $X\times C^{m+2,\alpha}(\p\Omega)$ to $C^{m,\alpha}(\ol\Omega)$, since $\sigma_k$ is a polynomial in the entries of the Hessian.
		
		The derivative in the $w$ variable at $(0,g_0)$ is
		\begin{equation}\label{eq:linearized-kHessian}
			D_w\mathcal F(0,g_0)v=F^{ij}\p_{ij}v, \quad
			F^{ij}=\frac{\p\sigma_k}{\p M_{ij}}(D^2u_0).
		\end{equation}
		The tensor $(F^{ij})$ is positive definite on the $k$-admissible cone. By \eqref{eq:compact-admissible-set}, it is uniformly positive definite on $\ol\Omega$. Hence, the Dirichlet realization $v\mapsto F^{ij}\p_{ij}v$ with $v|_{\p\Omega}=0$ is an isomorphism from $X$ to $C^{m,\alpha}(\ol\Omega)$ by the maximum principle, the Fredholm alternative, and Schauder theory \cite{gilbarg2015elliptic}.
		
		The implicit function theorem gives a unique local branch $g\mapsto w(g)$, and hence a smooth local solution map $g\mapsto u_g$ in the stated H\"older spaces. The admissibility persists after shrinking $\mathcal U$, since $\Gk$ is open. Applying the same argument in all H\"older scales and using elliptic bootstrapping gives smooth dependence in the $C^\infty$ topology.
	\end{proof}

	\begin{remark}\label{rem:why-smooth-dependence}
		The proof of Theorem \ref{thm:main} does not use higher-order linearization of the nonlinear DN map. It uses the values of $\Lambda_f$ on the large family \eqref{eq:large-data-intro} and then passes to the limit $t\to\infty$. The smooth dependence in Proposition~\ref{prop:CNS-forward} is included to make the nonlinear DN map a smooth boundary response map in the usual H\"older and smooth categories, and to justify the first variation formula recorded below.
	\end{remark}
	
	For a fixed positive source $f$, the nonlinear DN map is therefore well defined by \eqref{eq:DN-intro}.
	In Theorem \ref{thm:main}, only the values on the boundary data family
	\begin{equation}\label{eq:restricted-family}
		\mathcal B_{\rm large}=\left\{t\phi_E|_{\p\Omega}:\, E\in\Gr(k-1,n), \, t\ge t_E\right\}
	\end{equation}
	are used. The threshold $t_E$ may depend on $E$. All asymptotic estimates are proved for a fixed $E$, which is sufficient since the Radon transform is recovered in one direction $V=E^\perp$ at a time.
	
	Although higher-order linearization is not used in the proof of Theorem \ref{thm:main}, Proposition \ref{prop:CNS-forward} allows one to differentiate the solution map. If $u_g$ solves \eqref{eq:CNS-forward} and $h\in C^\infty(\p\Omega)$, the first variation $v_h:=Du_g[h]$ is well defined and satisfies
	\begin{equation}\label{eq:first-var-khessian}
		\begin{cases}
			F^{ij}\p_{ij}v_h=0 & \text{in } \Omega,\\
			v_h=h & \text{on } \p\Omega,
		\end{cases}
	\end{equation}
	where $F^{ij}=\frac{\p\sigma_k}{\p M_{ij}}(D^2u_g)$.

	\section{Fiberwise Poisson corrections}\label{sec:Poisson_corrections}
	
	Fix $E\in\Gr(k-1,n)$. Recall that $V=E^\perp$ and $q=\dim V=n-k+1$. Then every point $x\in\R^n$ has a unique decomposition
	\begin{equation}\label{eq:x-y-z}
		x=y+z, \quad y=P_Ex\in E, \quad z=P_Vx\in V.
	\end{equation}
	We also set $\Omega_E=P_E(\Omega)$, $\Omega_{E,y}=\Omega\cap(y+V)$, for $y\in\Omega_E$. The set $\Omega_E$ parametrizes the sections, and $\Omega_{E,y}$ is the $q$-dimensional section of $\Omega$ with parameter $y$.
	
	The purpose of this section is not merely to solve the Dirichlet problem on each fixed section. For the inverse problem argument, the coefficient $w_E^f$ has to be a smooth function on the original domain, so that its ambient normal derivative on $\p\Omega$ is meaningful and can be compared with the DN asymptotics. Away from glancing points, this is standard parameter-dependent elliptic regularity. The main issue is the behavior near points where $V$ becomes tangent to $\p\Omega$. At such points, the corresponding affine sections shrink to a point, and the Dirichlet problem is posed on a family of collapsing domains. The next lemmas show that uniform convexity gives a quadratic normal form for this collapse and that the sectionwise solutions have smooth expansions through it.
	
	The gradient, Hessian, and Laplacian in the $V$ variables are denoted by $\nabla_V$, $D_V^2$, and $\Delta_V$. After fixing $y\in E$, these are the usual Euclidean differential operators acting on the variable $z\in V$ along the affine plane $y+V$. If $\{e_1,\ldots,e_q\}$ is an orthonormal basis of $V$, then
	\begin{equation}\label{eq:Delta-V-basis}
		\Delta_Vu(y+z)=\sum_{\alpha=1}^q \frac{d^2}{ds^2}\Big|_{s=0}u(y+z+se_\alpha).
	\end{equation}
	Equivalently, in coordinate-free notation,
	\begin{equation}\label{eq:Delta-V}
		\Delta_Vu=\tr(P_VD^2u).
	\end{equation}
	In particular, $\Delta_V$ contains only the second derivatives in the $V$ directions; it does not contain second derivatives in the $E$ directions or mixed $E$-$V$ derivatives. For each fixed parameter $y$, the equations below are ordinary Dirichlet problems for the Euclidean Laplacian on the $q$-dimensional domain $\Omega_{E,y}\subset y+V$.
	
	\begin{definition}\label{def:fiber-correction}
		Let $f\in C^\infty(\ol\Omega)$. For every $y\in\Omega_E$, define $w_E^f(y,\cdot)$ by
		\begin{equation}\label{eq:fiber-w}
			\begin{cases}
				\Delta_V w_E^f(y,\cdot)=f(y+\cdot)& \text{ in } \Omega_{E,y}, \\
				w_E^f(y,\cdot)=0 & \text{ on } \p\Omega_{E,y}.
			\end{cases}
		\end{equation}
		We also define the section barrier $b_E$ by
		\begin{equation}\label{eq:fiber-b}
			\begin{cases}
				\Delta_V b_E(y,\cdot)=1 & \text{ in } \Omega_{E,y}, \\
				b_E(y,\cdot)=0 &\text{ on } \p\Omega_{E,y}.
			\end{cases}
		\end{equation}
	\end{definition}
	
	The functions are initially defined section by section. We use the terminology and notation from \eqref{eq:non-glancing}: points of $\Gamma_E$ are non-glancing, while points of $\mathcal G_E$ are glancing. The point needing proof is smoothness up to $\ol\Omega$ across the glancing set. Near a glancing point, the affine sections become tangent to $\p\Omega$, and in suitable local coordinates, the fiber domains shrink to a point.
	
	We record the elementary geometry of the projected sections. Since $\Omega$ is open and $P_E$ is an open linear map, the set $\Omega_E=P_E(\Omega)$ is open as a subset of $E$. Its boundary $\p\Omega_E$ is understood as the boundary relative to the vector space $E$. For $y\in\Omega_E$, the section $\Omega_{E,y}$ has a nonempty interior in the affine plane $y+V$. Points of $\p\Omega_E$ correspond to degenerate limiting sections.
	
	\begin{lemma}\label{lem:section-geometry}
		Let $\Omega\subset\R^n$ be a bounded smooth uniformly convex domain. Fix $E\in\Gr(k-1,n)$ and $V=E^\perp$.		
		\begin{enumerate}[\rm(i)]
			\item\label{item_1_sec_geo} If $y\in\Omega_E$, then every point of $\p\Omega\cap(y+V)$ is non-glancing, that is $|P_V\nu|>0$ there.
			\item\label{item_2_sec_geo} If $y\in\p\Omega_E$, then $\ol\Omega\cap(y+V)$ consists of at most one point. In particular, it has $q$-dimensional Hausdorff measure zero.
		\end{enumerate}
	\end{lemma}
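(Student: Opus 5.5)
Both parts follow from convexity of $\Omega$ together with the observation that glancing means $V\subset T_x\p\Omega$. The common idea is that if $\p\Omega$ contains a point $x$ with $V\subset T_x\p\Omega$, then the supporting hyperplane $H_x=x+T_x\p\Omega$ contains the whole affine plane $x+V$. Since $\Omega$ is strictly convex, $\ol\Omega\cap H_x=\{x\}$.

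For (i), let $y\in\Omega_E$ and pick $x_0\in\Omega$ with $P_Ex_0=y$, so $x_0\in(y+V)\cap\Omega$. Suppose $x\in\p\Omega\cap(y+V)$ were glancing, i.e. $P_V\nu(x)=0$. Then $\nu(x)\in E$, and $x_0-x\in V$. Hence $\nu(x)\cdot(x_0-x)=0$, so $x_0$ lies on the supporting hyperplane $\{p:\nu(x)\cdot(p-x)=0\}$. But convexity of $\Omega$ gives $\nu(x)\cdot(p-x)<0$ for every interior point $p$, since $\Omega$ is open and lies in the open half-space. This is a contradiction, so $|P_V\nu(x)|>0$.

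For (ii), let $y\in\p\Omega_E$. Since $\ol\Omega$ is compact, $P_E(\ol\Omega)=\ol{\Omega_E}$ is compact, so $\ol\Omega\cap(y+V)\neq\emptyset$. Suppose it contains two distinct points $a,b$. Their midpoint $m$ lies in $\ol\Omega\cap(y+V)$. I claim $m\in\Omega$, which would give $y=P_Em\in\Omega_E$, contradicting $y\in\p\Omega_E$ because $\Omega_E$ is open. The claim is exactly strict convexity: a smooth uniformly convex domain has positive curvature, so $\p\Omega$ contains no segments, and the open segment between two points of $\ol\Omega$ lies in $\Omega$.

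I would justify this by the standard argument. If $m\in\p\Omega$, the supporting hyperplane at $m$ has $\ol\Omega$ on one side and contains $m$ in the interior of the segment $[a,b]$. It therefore contains the whole segment, so $[a,b]\subset\p\Omega\cap H_m$. This contradicts uniform convexity, since locally $\p\Omega$ lies strictly below $H_m$ quadratically: $\nu(m)\cdot(p-m)\le -c|p-m|^2$ for $p\in\p\Omega$ near $m$.

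Hence $\ol\Omega\cap(y+V)$ is a single point, and since $q\ge1$ it has $\mathcal H^q$-measure zero. The only mildly delicate step is the strict convexity claim, i.e. the quadratic separation from supporting hyperplanes. That follows from positive principal curvatures locally, and from convexity to globalize.
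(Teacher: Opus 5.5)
Your proof is correct. For (i) you follow the paper's argument: a glancing point forces $\nu(x)\in E$, so the supporting hyperplane at $x$ contains $y+V$ and hence a point of $\Omega$. Your version needs only plain convexity, since the open set $\Omega$ lies in the open half-space $\{p:\nu(x)\cdot(p-x)<0\}$, whereas the paper appeals to strict convexity here.

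For (ii) your route is genuinely different.

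The paper uses that $\Omega_E$ is open and convex in $E$. It takes a supporting hyperplane of $\Omega_E$ at $y$ with unit normal $\eta\in E$ and lifts it to $\{x:\eta\cdot x=\eta\cdot y\}$. This hyperplane supports $\Omega$ and contains $y+V$. The paper then uses that a supporting hyperplane of a uniformly convex domain meets $\ol\Omega$ in at most one point.

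You instead take the midpoint of two putative points of $\ol\Omega\cap(y+V)$. Strict convexity puts it in $\Omega$, so $y\in\Omega_E$, which contradicts $y\in\p\Omega_E$ because $\Omega_E$ is open. This avoids the supporting hyperplane theorem in $E$ and any use of the convexity of $\Omega_E$. You also justify the strict convexity fact, which the paper takes for granted, via the local estimate $\nu(m)\cdot(p-m)\le -c|p-m|^2$. That local estimate at $m$ already suffices, so the globalization step you mention is unnecessary.

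In exchange, the paper's route shows implicitly that the touching hyperplane has normal $\eta\in E$. Hence the single point of a degenerate section is itself glancing, though the lemma does not need this. Your remark that $\ol\Omega\cap(y+V)$ is nonempty is correct but likewise not required.
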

	
	\begin{proof}
		We first prove \ref{item_1_sec_geo}. Let $y\in\Omega_E$. Since $y$ is in the projection of the open set $\Omega$, the section $\Omega\cap(y+V)$ contains an interior point of $\Omega$. 		Suppose that there is a point $x\in\p\Omega\cap(y+V)$ with $P_V\nu(x)=0$. Then $\nu(x)$ is orthogonal to $V$. Indeed, for every $v\in V$,
		\[
		v\cdot \nu(x)=v\cdot P_V\nu(x)=0.
		\]
		Since $T_x\p\Omega=\{\xi\in\R^n:\xi\cdot \nu(x)=0\}$, this gives $V\subset T_x\p\Omega$. Hence, the affine plane $x+V=y+V$ is contained in the tangent hyperplane to $\p\Omega$ at $x$. Uniform convexity implies strict convexity, and the tangent hyperplane supports $\ol\Omega$ at $x$ and meets $\ol\Omega$ only at $x$. This contradicts the fact that the section contains an interior point of $\Omega$. This proves $|P_V\nu|>0$ on $\p\Omega\cap(y+V)$.
		
		We next prove \ref{item_2_sec_geo}. Let $y\in\p\Omega_E$. Since $\Omega_E$ is an open convex subset of the Euclidean space $E$, the supporting hyperplane theorem gives a nonzero vector $\eta\in E$ such that $\eta\cdot (y'-y)\le 0$, for all $y'\in\Omega_E$. After normalizing $\eta$, we may assume $|\eta|=1$. Equivalently, $\eta\cdot y'\le \eta\cdot y$, for $y'\in\Omega_E$. For any $x'\in\Omega$, one has $P_Ex'\in\Omega_E$. Since $\eta\in E$ and $V=E^\perp$, there holds $\eta\cdot x'=\eta\cdot P_Ex'\le \eta\cdot y$. Hence, the hyperplane $\{x:\eta\cdot P_Ex=\eta\cdot y\}$ supports $\Omega$. The affine plane $y+V$ lies inside this supporting hyperplane. Uniform convexity implies that a supporting hyperplane touches $\ol\Omega$ at at most one point. Thus, $\ol\Omega\cap(y+V)$ consists of at most one point, and its $q$-dimensional Hausdorff measure is zero.
	\end{proof}
	
	\begin{lemma}\label{lem:b-sign}
		Let $\Omega\subset\R^n$ be a bounded smooth uniformly convex domain. Fix $E\in\Gr(k-1,n)$ and set $V=E^\perp$. Let $b_E$ be the section barrier defined in \eqref{eq:fiber-b}. For every nondegenerate section $\Omega_{E,y}$,
		\begin{equation}\label{eq:b-section-sign}
			-\frac{\diam(\Omega)^2}{2q}\le b_E(y,\cdot)\le0
			\quad
			\text{in } \Omega_{E,y}.
		\end{equation}
		The same bounds hold in the whole domain:
		\begin{equation}\label{eq:b-sign}
			-\frac{\diam(\Omega)^2}{2q}\le b_E\le0
			\quad
			\text{in } \Omega.
		\end{equation}
		If $x\in\p\Omega$ and $|P_V\nu(x)|>0$, then
		\begin{equation}\label{eq:b-Hopf}
			\p_\nu b_E(x)>0.
		\end{equation}
	\end{lemma}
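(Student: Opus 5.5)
The plan is to argue section by section with the explicit quadratic comparison function, and then to read off the Hopf sign from a one-sided difference quotient in the ambient normal direction.

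\emph{Step 1: the bounds on a section.} Fix $y\in\Omega_E$ and any point $z_0\in\Omega_{E,y}$. Consider
\[
\psi(z)=\frac{|z-z_0|^2-\diam(\Omega)^2}{2q}, \qquad z\in y+V.
\]
Then $\Delta_V\psi=1$, and on $\p\Omega_{E,y}$ we have $|z-z_0|\le\diam(\Omega)$, so $\psi\le0=b_E$ there. Hence $b_E-\psi$ is $\Delta_V$-harmonic with nonnegative boundary values, and the maximum principle on the bounded convex $q$-dimensional domain $\Omega_{E,y}$ gives $b_E\ge\psi$. Evaluating at $z=z_0$ yields $b_E(y,z_0)\ge-\diam(\Omega)^2/(2q)$. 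The upper bound comes from $\Delta_Vb_E=1>0$: $b_E$ is subharmonic with zero boundary values, so $b_E\le0$. Since every $x\in\Omega$ lies in exactly one section $\Omega_{E,P_Ex}$, which is nondegenerate, \eqref{eq:b-section-sign} immediately gives \eqref{eq:b-sign}.

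\emph{Step 2: the Hopf sign.} Let $x\in\p\Omega$ with $P_V\nu(x)\neq0$ and set $y=P_Ex$. By Lemma~\ref{lem:section-geometry}, $x$ is a boundary point of a nondegenerate section, and $\p\Omega_{E,y}$ is smooth near $x$. The reason is that transversality $|P_V\nu|>0$ makes $\p\Omega\cap(y+V)$ a smooth hypersurface of $y+V$, with outward unit normal $n_V=P_V\nu/|P_V\nu|$. The section is convex, so it satisfies an interior ball condition. The Hopf lemma for $b_E(y,\cdot)$, which is strictly subharmonic, attains its maximum $0$ at $x$, and is nonconstant, gives $\p_{n_V}b_E(y,\cdot)(x)>0$.

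To pass to the ambient normal derivative I would use the smoothness of $b_E$ on $\ol\Omega$, which is the regularity established in this section near non-glancing points. That regularity is standard there, by parameter-dependent Schauder theory after flattening the section family. With it, $\nabla b_E(x)$ is defined. Since $b_E=0$ on $\p\Omega$, its tangential gradient vanishes, so $\nabla b_E(x)=\p_\nu b_E(x)\,\nu$. Then
\[
\p_{n_V}b_E(x)=\nabla b_E(x)\cdot n_V=\p_\nu b_E(x)\,|P_V\nu(x)|,
\]
and positivity transfers to $\p_\nu b_E(x)>0$.

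\emph{Main obstacle.} The delicate point is this last step, namely having enough regularity of $b_E$ in the $y$-variable at $x$ to identify the ambient gradient. If one wants to avoid relying on global smoothness, an alternative is a difference-quotient argument. Along $-\nu$ one leaves the section $y+V$, so instead I would compare within the section: $b_E(x-sn_V)<-cs$ by a uniform Hopf barrier. Such a barrier is available with uniform constants for nearby sections, since the interior ball radius and the curvature bounds are uniform near a non-glancing point. Combining this with $C^1$ regularity of $b_E$ near $x$, which holds away from glancing points, gives the claim.
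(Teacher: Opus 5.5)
Your proposal is correct and follows the paper's proof essentially step for step. It uses the quadratic comparison function $(|z-z_0|^2-\diam(\Omega)^2)/(2q)$ on each section for the two-sided bound, then the sectionwise Hopf lemma at the non-glancing point, and finally ambient smoothness of $b_E$ near non-glancing points (parameter-dependent Schauder theory on the smooth family of sections) together with $\nabla b_E=(\p_\nu b_E)\nu$ to get $\p_{\nu_V}b_E=|P_V\nu|\,\p_\nu b_E$.

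Two justifications are slightly off. The interior ball condition at $x$ comes from the smoothness of $\p\Omega_{E,y}$ near $x$, not from convexity. Nondegeneracy of the section through $x$ does not follow from Lemma~\ref{lem:section-geometry} as stated; it is seen most directly, as in the paper, from the transversal segment $x+sP_V\nu(x)$ entering $\Omega$.
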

	
	\begin{proof}
		We first work on a fixed section. Let $y\in\Omega_E$. The section $\Omega_{E,y}$ is a bounded smooth convex domain in the affine plane $y+V$. On this section, $b_E(y,\cdot)$ solves \eqref{eq:fiber-b}. The weak maximum principle in $\Omega_{E,y}$ gives $b_E(y,\cdot)\le0$ in $\Omega_{E,y}$. The strong maximum principle gives $b_E(y,\cdot)<0$ in the interior of every nondegenerate section.
		
		We next prove the lower bound. Set $D=\diam(\Omega)$ and choose a point $z_0\in\Omega_{E,y}$. Since $\Omega_{E,y}\subset\Omega$, every point $z\in\Omega_{E,y}$ satisfies $|z-z_0|\le D$. Define
		\begin{equation}\label{eq:section-quadratic-barrier}
			\beta(z)=\frac{|z-z_0|^2-D^2}{2q},
			\quad
			z\in y+V.
		\end{equation}
		Then $\Delta_V\beta=1$ in $y+V$. The function $b_E-\beta$ is harmonic in $\Omega_{E,y}$, and on $\p\Omega_{E,y}$ one has $b_E-\beta=-\beta\ge0$. The maximum principle gives $b_E\ge \beta$ in $\Omega_{E,y}$. Since $\beta\ge -D^2/(2q)$, we get $b_E(y,\cdot)\ge -\frac{D^2}{2q}$ in $\Omega_{E,y}$, which proves \eqref{eq:b-section-sign}. Every point of $\Omega$ lies in a nondegenerate section, so \eqref{eq:b-sign} follows.
		
		It remains to prove the boundary sign. Let $x\in\p\Omega$ satisfy $|P_V\nu(x)|>0$, and set $y=P_Ex$, then $x$ is a non-glancing boundary point of the section $\Omega_{E,y}$. Since $P_V\nu(x)\ne0$, the affine line $x+sP_V\nu(x)$ crosses $\p\Omega$ transversely at $x$. Hence, for one sign of small $s$, the points $x+sP_V\nu(x)$ lie in $\Omega$. These points have the same $E$-projection $y=P_Ex$, and $y\in\Omega_E$. The section boundary is smooth near $x$, and its outward unit normal inside the affine plane $y+V$ is $\nu_V:=\frac{P_V\nu}{|P_V\nu|}$. By the strong maximum principle, $b_E<0$ inside the section and $b_E=0$ on $\p\Omega_{E,y}$. The Hopf lemma in the section gives
		\begin{equation}\label{eq:Hopf-section}
			\p_{\nu_V}b_E(x)>0.
		\end{equation}
		
		We next justify the smoothness in the ambient variables near this non-glancing point. Let $x_0=x$ and set $y_0=P_Ex_0$. Since $x_0\in\p\Omega\cap(y_0+V)$ and $|P_V\nu(x_0)|>0$, the section crosses the boundary transversely at $x_0$. More explicitly, let $\rho$ be a smooth defining function for $\Omega$ near $\p\Omega$, with $\Omega=\{\rho>0\}$. In the splitting $x=y+z$, the boundary of the section is given by $\rho(y+z)=0$. The non-glancing condition is exactly
		\[
		D_z\rho(y_0+z_0)=P_V\nabla\rho(x_0)\neq0,
		\quad z_0=P_Vx_0.
		\]
		Thus, the implicit function theorem gives local boundary charts for $\p\Omega\cap(y+V)$ depending smoothly on $y$ near $y_0$.
		
		In fact, this can be done uniformly along the whole boundary of the section. By Lemma~\ref{lem:section-geometry}\ref{item_1_sec_geo}, every point of the compact set $\p\Omega\cap(y_0+V)$ is non-glancing. After shrinking a neighborhood $O\Subset\Omega_E$ of $y_0$, one has $|P_V\nu|\ge\lambda>0$ on $\p\Omega\cap(y+V)$ for all $y\in O$. A finite collection of the above implicit-function charts therefore shows that the domains $\Omega_{E,y}$ form a smooth family of bounded domains in the affine planes $y+V$, for $y\in O$. Equivalently, after choosing a reference section, there are smooth diffeomorphisms from this fixed section to $\Omega_{E,y}$, depending smoothly on $y$.
		
		Pulling back the sectionwise Dirichlet problem for $b_E$ to the fixed reference section gives a family of uniformly elliptic Dirichlet problems with smooth coefficients, smooth right-hand side, and zero boundary value. Standard parameter-dependent Schauder theory, or the implicit function theorem for the pulled-back Dirichlet operator, gives smooth dependence of $b_E(y,\cdot)$ on both the parameter $y$ and the fiber variable up to the section boundary. Thus $b_E$ is smooth in the ambient variables near $x_0$.
		
		Since $b_E=0$ on $\p\Omega$, its full gradient at $x$ is normal to $\p\Omega$: $\nabla b_E(x)=(\p_\nu b_E(x))\nu(x)$. Therefore,
		\begin{equation}\label{eq:normal-relation-b}
			\p_{\nu_V}b_E(x)=\nabla_Vb_E(x)\cdot\nu_V=|P_V\nu(x)|\p_\nu b_E(x).
		\end{equation}
		Since $|P_V\nu(x)|>0$, \eqref{eq:Hopf-section} gives $\p_\nu b_E(x)>0$.
	\end{proof}
	
	Smooth dependence of $w_E^f$ and $b_E$ on the section parameter is standard near $\Gamma_E$. The next lemmas deal with points of $\mathcal G_E$, where the sections shrink tangentially.

	\begin{remark}\label{rem:morse-lemma-parameters}
		We shall use the Morse lemma with parameters in the following elementary form. Let $F(y,z)$ be smooth near $(0,0)$, where $y$ is a parameter and $z$ is the variable. Suppose that $D_zF(0,0)=0$ and that $D_z^2F(0,0)$ is nondegenerate. Then, after shrinking neighborhoods, the critical point in the $z$ variable is a smooth function $z=c(y)$, and there is a smooth change of the $z$ variables, depending smoothly on $y$, which puts $F$ into a quadratic normal form in the $z$ variables. We use this with $F=\rho$ and with the negative definite quadratic form in the $z$ variables. See, for instance, \cite[Proposition 2.41]{Nicolaescu2011_Morse}.
	\end{remark}

	\begin{lemma}\label{lem:normal-form}
		Let $x_0\in\mathcal G_E$ be a glancing point and put $y_0=P_Ex_0$. There exist neighborhoods $O\subset E$ of $y_0$, $B\subset V$ of $0$, and $\mathcal U\subset\R^n$ of $x_0$, a smooth diffeomorphism $\Phi:O\times B\to\mathcal U$, and a smooth function $\beta:O\to\R$ with the following properties:
		\begin{enumerate}[\rm(i)]
			\item $\Phi(y,z)\in y+V$ for all $(y,z)\in O\times B$, and $\Phi(y_0,0)=x_0$.
			\item $\beta(y_0)=0$ and $d\beta(y_0)\ne0$.
			\item In these coordinates,
			\begin{equation}\label{eq:normal-form}
				\Phi^{-1}(\Omega\cap\mathcal U)=\{(y,z)\in O\times B: |z|^2<\beta(y)\}.
			\end{equation}
			\item If $\widetilde u=u\circ\Phi$, then the pullback of $\Delta_V$ has the form
			\begin{equation}\label{eq:Ly}
				(\Delta_Vu)\circ\Phi=L_y\widetilde u,
				\quad
				L_y=A^{\alpha\beta}(y,z)\p_{z_\alpha z_\beta}
				+B^\alpha(y,z)\p_{z_\alpha},
			\end{equation}
			where the coefficients are smooth, and $L_y$ is uniformly elliptic in the $z$ variables.
		\end{enumerate}
	\end{lemma}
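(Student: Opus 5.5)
We work with a local defining function $\rho$ for $\Omega$ near $x_0$, with $\Omega=\{\rho>0\}$, $\nabla\rho\neq0$ on $\p\Omega$, and with $\rho$ uniformly concave near $\p\Omega$. Such a $\rho$ exists by uniform convexity, for instance a suitable modification of the signed distance, or the function $-\psi(\text{distance})$ built from a uniformly convex defining function. In the splitting $x=y+z$ we set $F(y,z)=\rho(y+z)$.

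At the glancing point we have $P_V\nu(x_0)=0$. Hence $D_zF(y_0,z_0)=P_V\nabla\rho(x_0)=0$, where $z_0=P_Vx_0$. Uniform concavity of $\rho$ gives $D_z^2F(y_0,z_0)=P_VD^2\rho(x_0)P_V|_V$, which is negative definite. We are therefore in the setting of Remark \ref{rem:morse-lemma-parameters}. It gives a smooth critical point $z=c(y)$ of $F(y,\cdot)$ with $c(y_0)=z_0$, and a smooth change of variables $z=c(y)+\psi(y,\zeta)$, with $\psi(y,0)=0$ and $\psi(y,\cdot)$ a local diffeomorphism of $V$ near $0$ depending smoothly on $y$, such that
\[
F(y,c(y)+\psi(y,\zeta))=F(y,c(y))-|\zeta|^2 .
\]
To reach the negative identity form we absorb a fixed square root of $-D_z^2F$ into $\psi$. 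We then define $\beta(y):=F(y,c(y))=\rho(y+c(y))$ and $\Phi(y,\zeta):=y+c(y)+\psi(y,\zeta)$.

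With these definitions, property (i) holds by construction, and $\Phi(y_0,0)=y_0+z_0=x_0$. Since $\Phi$ is smooth and $D\Phi(y_0,0)$ is block triangular with invertible diagonal blocks ($\mathrm{Id}_E$ and $D_\zeta\psi$), the inverse function theorem makes $\Phi$ a diffeomorphism from some $O\times B$ onto a neighborhood $\mathcal U$. Property (iii) follows because $\{\rho>0\}$ becomes $\{|\zeta|^2<\beta(y)\}$. For (ii), first $\beta(y_0)=\rho(x_0)=0$. Since $D_zF(y,c(y))=0$, the chain rule gives
\[
d\beta(y)=D_yF(y,c(y))=P_E\nabla\rho(y+c(y)).
\]
At $y_0$ we have $\nabla\rho(x_0)\neq0$ and $P_V\nabla\rho(x_0)=0$, so $P_E\nabla\rho(x_0)=\nabla\rho(x_0)\neq0$. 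Hence $d\beta(y_0)\neq0$. For (iv), $\Delta_V$ differentiates only along $y+V$, and for fixed $y$ the map $\zeta\mapsto\Phi(y,\zeta)$ is a diffeomorphism within $y+V$. The chain rule therefore expresses $\Delta_V$ as a second-order operator in $\zeta$ alone, with coefficients
\[
A=(D_\zeta\psi)^{-1}(D_\zeta\psi)^{-T},\qquad B^\alpha=\Delta_V(\zeta^\alpha\circ\Phi^{-1})\circ\Phi,
\]
both smooth. Since $D_\zeta\psi$ is invertible near $(y_0,0)$, $A$ is uniformly positive definite after shrinking $O\times B$.

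The main obstacle is the existence of a uniformly concave defining function, together with the claim that $D_z^2F$ is negative definite, i.e.\ transferring the uniform convexity of $\p\Omega$ to nondegeneracy of the fiberwise Hessian. If only positive curvature of $\p\Omega$ is used directly, I would instead argue as follows. Take $\rho$ as minus the signed distance and note that $D^2\rho(x_0)$ restricted to $T_{x_0}\p\Omega$ equals minus the second fundamental form, which is negative definite. Since $V\subset T_{x_0}\p\Omega$ at a glancing point, this alone gives the required negative definiteness on $V$. A second point needing care is shrinking the neighborhoods consistently, so that $B$ contains the ball $\{|\zeta|^2<\beta(y)\}$ for all $y\in O$. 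This holds after shrinking $O$, since $\beta$ is small near $y_0$.
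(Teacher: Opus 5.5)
Your proposal is correct and follows essentially the same route as the paper: a defining function $\rho$ with $\Omega=\{\rho>0\}$, the vanishing of $D_z\rho$ and the negative definiteness of $D_z^2\rho$ at the glancing point (via the second fundamental form on $V\subset T_{x_0}\p\Omega$, which is exactly your fallback argument), the parametric Morse lemma producing $\beta(y)=\rho(y+c(y))$ with $d\beta(y_0)=P_E\nabla\rho(x_0)\neq0$, and the chain rule for (iv). Your explicit coefficient formulas and the inverse-function-theorem check for $\Phi$ only spell out steps the paper states briefly, and no globally uniformly concave defining function is actually needed, since only $D_z^2\rho$ at $x_0$ on $V\subset T_{x_0}\p\Omega$ enters.
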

	
	\begin{proof}
		We work in the splitting $\R^n=E\oplus V$ and write points as $(y,z)$, where $y\in E$ and $z\in V$. Translating the coordinates, we may assume that $x_0=0$ and $y_0=P_Ex_0=0$. Choose a smooth defining function in these coordinates. More explicitly, let $\rho=\rho(y,z)$ be a smooth real-valued function near $(0,0)$ such that, near $0$,
		\begin{equation}\label{eq:defining-function-local}
			\Omega=\{(y,z):\, \rho(y,z)>0\}, \quad
			\p\Omega=\{(y,z):\, \rho(y,z)=0\},
		\end{equation}
		and $D\rho\ne0$ on $\p\Omega$. Here $D_y\rho$ and $D_z\rho$ denote the derivatives of $\rho$ with respect to the base variables $y\in E$ and the fiber variables $z\in V$.
		
		Since $x_0$ is glancing, one has $P_V\nu(x_0)=0$, equivalently $V\subset T_{x_0}\p\Omega$. The differential $D\rho(0,0)$ is normal to $\p\Omega$ at $x_0$, and therefore it vanishes on every vector in $V$. In the variables $(y,z)$, this gives
		\begin{equation}\label{eq:Dzrho-zero}
			D_z\rho(0,0)=0.
		\end{equation}
		Since $\rho$ is a defining function, $D\rho(0,0)\ne0$. The $V$-component of this derivative is zero by \eqref{eq:Dzrho-zero}, so the $E$-component must be nonzero:
		\begin{equation}\label{eq:Dyrho-nonzero}
			D_y\rho(0,0)\ne0.
		\end{equation}
		
		We next check the nondegeneracy in the fiber variables. At a glancing point, the space $V$ is contained in $T_{x_0}\p\Omega$. With the sign convention $\Omega=\{\rho>0\}$, the second fundamental form of $\p\Omega$ is represented, up to a positive scalar factor, by $-D^2\rho(0,0)$ restricted to tangent directions. Uniform convexity therefore gives that $-D^2\rho(0,0)$ is positive definite on $T_{x_0}\p\Omega$. In particular,
		\begin{equation}\label{eq:Dzzrho-negative}
			-D_z^2\rho(0,0)>0
		\end{equation}
		as a quadratic form on $V$.
		
		By \eqref{eq:Dzzrho-negative}, the implicit function theorem applies to the equation $D_z\rho(y,z)=0$. Thus there is a unique smooth function $c(y)$ near $0$, with $c(0)=0$, such that $D_z\rho(y,c(y))=0$. Define
		\begin{equation}\label{eq:beta-def}
			\beta(y)=\rho(y,c(y)).
		\end{equation}
		Since $D_z\rho(y,c(y))=0$, differentiating \eqref{eq:beta-def} at $0$ gives $d\beta(0)=D_y\rho(0,0)$. By \eqref{eq:Dyrho-nonzero}, $d\beta(0)\ne0$.
		
		We apply the Morse lemma with parameters in the $z$ variables; see Remark \ref{rem:morse-lemma-parameters}. For each fixed $y$ close to $0$, the function $z\mapsto\rho(y,z)$ has a nondegenerate critical point at $z=c(y)$, and its Hessian in the $z$ variable is negative definite. The parameter-dependent Morse lemma gives a smooth change of fiber variables, depending smoothly on $y$, which moves this critical point to $z=0$ and writes the defining function as
		\begin{equation}\label{eq:morse}
			\rho(\Phi(y,z))=\beta(y)-|z|^2.
		\end{equation}
		The map is fiber-preserving, so $\Phi(y,z)\in y+V$ for each fixed $y$. Since $\Omega=\{\rho>0\}$ near $x_0$, \eqref{eq:morse} gives
		\begin{equation}\label{eq:normal-form-proof}
			\Phi^{-1}(\Omega\cap\mathcal U)=\{(y,z): |z|^2<\beta(y)\}
		\end{equation}
		after shrinking the neighborhoods.
		
		For fixed $y$, the map $z\mapsto\Phi(y,z)$ is a diffeomorphism between open subsets of the affine plane $y+V$. The operator $\Delta_V$ differentiates only along this affine plane. Its pullback by the fiber diffeomorphism is therefore a second-order operator only in the $z$ variables. By the chain rule, it has the form
		\begin{equation}\label{eq:Ly-proof}
			L_y=A^{\alpha\beta}(y,z)\p_{z_\alpha z_\beta}
			+B^\alpha(y,z)\p_{z_\alpha}.
		\end{equation}
		The coefficients are smooth because $\Phi$ is smooth. The principal matrix $(A^{\alpha\beta})$ is uniformly positive definite after shrinking the neighborhoods, since $D_z\Phi$ is invertible and the original operator is the Euclidean Laplacian on each fiber. This gives \eqref{eq:Ly}.
	\end{proof}

	In the next two local lemmas, the variables are $(y,t,z)$, where $y$ is a parameter, $t\ge0$ is a base variable, and $z$ is the fiber variable. For $t>0$, the fiber domain is the ball $|z|^2<t$. When $t=0$, this ball has radius zero and is reduced to the point $z=0$. We denote the corresponding zero-radius set by
	\begin{equation}\label{eq:zero-radius-set}
		\mathcal C=\{(y,t,z):\, t=0,\ z=0\}.
	\end{equation}
	A function is said to be flat on $\mathcal C$ if all its derivatives vanish to infinite order as $(t,z)\to(0,0)$, uniformly for $y$ in compact sets.

	\begin{lemma}\label{lem:flat-R}
		Let $y$ range in a small open set in $\R^m$, let $t\ge0$, and let $z\in\R^q$. Let $L_{y,t}$ be a smooth family of uniformly elliptic operators in the $z$ variables near $(y,t,z)=(0,0,0)$, of the form
		\begin{equation}\label{eq:Lyt}
			L_{y,t}=a^{ij}(y,t,z)\p_{z_i z_j}+a^i(y,t,z)\p_{z_i}.
		\end{equation}
		Assume that the matrix $(a^{ij})$ is uniformly positive definite near $(0,0,0)$. Let $R\in C^\infty$ be flat on the zero-radius set $\mathcal C$ in the following sense: for every multi-index $\alpha$ in the $y$ variables, every multi-index $\beta$ in the $z$ variables, every integer $a\ge0$, and every integer $N\ge0$,
		\begin{equation}\label{eq:R-flat}
			|\p_y^\alpha\p_t^a\p_z^\beta R(y,t,z)|\le C_{\alpha,a,\beta,N}(t+|z|^2)^N
		\end{equation}
		near $t=0$, $z=0$, for $t\ge0$. For $t>0$, let $W_f$ solve
		\begin{equation}\label{eq:flat-problem}
			\begin{cases}
				L_{y,t}W_f=R & \text{in } |z|^2<t,\\
				W_f=0 & \text{on } |z|^2=t.
			\end{cases}
		\end{equation}
		Then $W_f$ is flat on $\mathcal C$, together with all mixed derivatives. Here and below, the derivatives $\p_t^aW_f$ in the original variables are taken with $y$ and the original variable $z$ fixed. More explicitly, for every $\alpha,\beta$, every integer $a\ge0$, and every integer $N\ge0$,
		\begin{equation}\label{eq:Wf-flat}
			|\p_y^\alpha\p_t^a\p_z^\beta W_f(y,t,z)|
			\le C_{\alpha,a,\beta,N}t^N
			\quad
			\text{for } |z|\le\sqrt t
		\end{equation}
		when $t>0$ is sufficiently small.
	\end{lemma}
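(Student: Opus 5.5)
The plan is to rescale the shrinking balls to the unit ball and then control derivatives there.

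\emph{Rescaling.} For $t>0$ set $z=\sqrt t\,\zeta$ with $|\zeta|<1$, and write $\widetilde W(y,t,\zeta)=W_f(y,t,\sqrt t\,\zeta)$. Then \eqref{eq:flat-problem} becomes
\[
\widetilde L_{y,t}\widetilde W:=a^{ij}(y,t,\sqrt t\zeta)\p_{\zeta_i\zeta_j}\widetilde W+\sqrt t\,a^i(y,t,\sqrt t\zeta)\p_{\zeta_i}\widetilde W=t\,R(y,t,\sqrt t\zeta)
\]
in $|\zeta|<1$, with $\widetilde W=0$ on $|\zeta|=1$. This is a fixed domain, and the operator is uniformly elliptic uniformly in small $t>0$.

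\emph{Zeroth order in $(y,t)$.} By \eqref{eq:R-flat}, every $\zeta$-derivative of $tR(y,t,\sqrt t\zeta)$ is $O(t^N)$ for every $N$. The coefficients have uniformly bounded $C^m$ norms in $\zeta$ for $t\le1$. Schauder estimates on the unit ball, together with the ABP/maximum principle bound $\|\widetilde W\|_\infty\lesssim\|tR\|_\infty$, give $\|\widetilde W(y,t,\cdot)\|_{C^m(\ol B_1)}\le C_{m,N}t^N$. Passing back via $\p_z^\beta=t^{-|\beta|/2}\p_\zeta^\beta$ only loses finitely many powers of $t$, and these are absorbed since $N$ is arbitrary.

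\emph{Parameter derivatives.} These are the main obstacle, because the domain moves with $t$ in the original variables. I will work in the rescaled variables, where the domain is fixed. Differentiating the rescaled problem in $y$ is straightforward: $\p_y^\alpha\widetilde W$ solves the same Dirichlet problem, and its right-hand side consists of $\p_y$ of $tR$ plus commutator terms $(\p_y a^{ij})\p_{\zeta\zeta}\p_y^{\alpha'}\widetilde W$. By induction on $|\alpha|$ these are all $O(t^N)$.

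The $t$-derivatives are harder, because the coefficients depend on $\sqrt t$ and $\p_t$ produces factors $t^{-1/2}$. I will use $\p_t$ at fixed $\zeta$. Each such derivative hitting $a^{ij}(y,t,\sqrt t\zeta)$ or $R(y,t,\sqrt t\zeta)$ costs at most a factor $t^{-1/2}$ times bounded quantities. Inductively, $\p_t^a\p_y^\alpha\widetilde W$ solves a Dirichlet problem on $B_1$ with zero boundary data (since $\widetilde W=0$ on $|\zeta|=1$ for all $t$), and its right-hand side is bounded by $t^{-a}\cdot O(t^N)$. The Schauder estimate then gives $O(t^{N-a})=O(t^{N'})$ for arbitrary $N'$.

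\emph{Returning to the original variables.} Finally, convert to $(y,t,z)$ derivatives with $z$ fixed. Since $\zeta=z/\sqrt t$, the chain rule gives $\p_t|_z=\p_t|_\zeta-\frac{1}{2t}\zeta\cdot\nabla_\zeta$. Each application therefore costs at most a factor $t^{-1}$, and all resulting terms are still $O(t^N)$ on $|z|\le\sqrt t$, which yields \eqref{eq:Wf-flat}.

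The delicate points are bookkeeping ones. First, the solution and its $t$-derivatives must be smooth for $t>0$, which follows from the implicit function theorem for the fixed-domain problem, as in the proof of Lemma \ref{lem:b-sign}. Second, the negative powers of $t$ generated by differentiation must always be beaten by the infinite-order flatness of $R$.
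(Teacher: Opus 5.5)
Your proposal is correct and follows the paper's proof essentially step for step: the same rescaling $z=\sqrt t\,\zeta$ to the fixed ball $B_1$, the same use of smoothness in the parameters for $t>0$, uniform Schauder estimates with the maximum principle for the zero-trace Dirichlet problem, induction on $(y,t)$-derivatives at fixed $\zeta$ in which the commutator terms' finite negative powers of $t$ are absorbed by the arbitrary-order flatness of $R$, and the chain rule $\p_t|_z=\p_t|_\zeta-\frac{1}{2t}\zeta\cdot\nabla_\zeta$, $\p_z^\beta=t^{-|\beta|/2}\p_\zeta^\beta$ to return to the original variables. One harmless slip: a $t$-derivative that hits an already differentiated factor of $\sqrt t$ costs $t^{-1}$, not $t^{-1/2}$ (e.g.\ $\p_t^2\sqrt t=-\tfrac14 t^{-3/2}$), but your overall bound $t^{-a}$ is still right, and any finite loss is absorbed anyway.
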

	
	\begin{proof}
		We rescale the shrinking ball to a fixed ball. Put $z=\sqrt t\,\zeta$ and define
		\begin{equation}\label{eq:scaled-W}
			\widetilde W_f(y,t,\zeta)=W_f(y,t,\sqrt t\,\zeta),
			\quad
			|\zeta|<1.
		\end{equation}
		Since $\p_{z_i}=t^{-1/2}\p_{\zeta_i}$ and $\p_{z_i z_j}=t^{-1}\p_{\zeta_i\zeta_j}$, multiplying the equation by $t$ gives
		\begin{equation}\label{eq:scaled-flat-equation}
			\begin{cases}
				\widetilde L_{y,t}\widetilde W_f=tR(y,t,\sqrt t\,\zeta) & \text{ in }|\zeta|<1, \\
				\widetilde W_f=0 & \text{ on } |\zeta|=1,
			\end{cases}
		\end{equation}
		where
		\[
		\widetilde L_{y,t}
		=
		a^{ij}(y,t,\sqrt t\,\zeta)\p_{\zeta_i\zeta_j}
		+
		t^{1/2}a^i(y,t,\sqrt t\,\zeta)\p_{\zeta_i}.
		\]
		For $0<t<t_0$, after shrinking the neighborhood if necessary, the operators $\widetilde L_{y,t}$ are uniformly elliptic on the fixed ball $B_1$. Their coefficients are uniformly bounded in every $C^m(\ol{B_1})$ norm when no $t$ derivatives are taken. When $t$ derivatives are taken, the only possible singular factors come from differentiating $\sqrt t\,\zeta$. Thus, any fixed finite number of $t$ derivatives of the coefficients is bounded by a fixed finite power of $t^{-1}$.
		
		We first justify differentiating the rescaled solution with respect to the parameters. Fix $0<t_1<t_2$ and a compact set of $y$ parameters. On this parameter set, the Dirichlet realizations
		\begin{equation*}
			\widetilde L_{y,t}:\{U\in C^{m+2,\alpha_0}(\ol{B_1}):U|_{\p B_1}=0\}\to C^{m,\alpha_0}(\ol{B_1})
		\end{equation*}
		are isomorphisms, by the maximum principle and Schauder theory, and their coefficients depend smoothly on $(y,t)$ for $t>0$. Therefore, the solution $\widetilde W_f$ depends smoothly on $(y,t)$ for every $t>0$. Equivalently, one may obtain the same differentiated equations by applying difference quotients in the parameters and passing to the limit by Schauder estimates. In the argument below, all parameter differentiations are first made for $t>0$, and the estimates obtained are uniform as $t\downarrow0$.
		
		We next check the rescaled right-hand side. Fix the number of derivatives in $y,t,\zeta$ and fix an integer $M\ge0$. Derivatives in $\zeta$ give factors of $\sqrt t$ and derivatives in $z$. Derivatives in $t$ produce finitely many factors of $t^{-1/2}$ from the composition $z=\sqrt t\,\zeta$. Since $|\zeta|<1$, one has $t+|\sqrt t\,\zeta|^2\le 2t$. The flatness assumption \eqref{eq:R-flat} is available with arbitrarily large order. Choosing an order large enough absorbs all finite powers of $t^{-1/2}$. Therefore, for every integer $m\ge0$ and every $M\ge0$,
		\begin{equation}\label{eq:scaled-R-flat}
			\big\|\p_y^\alpha\p_t^a\p_\zeta^\beta(tR(y,t,\sqrt t\,\zeta))\big\|_{C^{m,\alpha_0}(\ol{B_1})}
			\le C_{\alpha,a,\beta,m,M}t^M,
		\end{equation}
		where $\alpha_0\in(0,1)$ is fixed.
		
		Apply the Schauder estimate for the Dirichlet problem on the fixed ball $B_1$ to \eqref{eq:scaled-flat-equation}. Since the boundary value is zero and the undifferentiated coefficients are uniformly elliptic with uniformly controlled $C^{m,\alpha_0}$ norms in the $\zeta$ variables, \eqref{eq:scaled-R-flat} gives
		\begin{equation}\label{eq:scaled-Schauder-0}
			\big\|\widetilde W_f\big\|_{C^{m+2,\alpha_0}(\ol{B_1})}\le C_{m,M}t^M
		\end{equation}
		for every $m$ and every $M$.
		
		Let $U_{\alpha,a}=\p_y^\alpha\p_t^a\widetilde W_f$. Since $\widetilde W_f=0$ on $|\zeta|=1$ for every $y$ and $t$, each $U_{\alpha,a}$ also has zero boundary value on $|\zeta|=1$. Differentiating \eqref{eq:scaled-flat-equation} gives
		\begin{equation}\label{eq:differentiated-scaled-equation}
			\begin{cases}
				\widetilde L_{y,t}U_{\alpha,a}
				=
				\p_y^\alpha\p_t^a(tR(y,t,\sqrt t\,\zeta))+\mathcal E_{\alpha,a}
				&\text{in }|\zeta|<1,\\
				U_{\alpha,a}=0&\text{on }|\zeta|=1.
			\end{cases}
		\end{equation}
		Here $\mathcal E_{\alpha,a}$ is a finite sum of terms of the form
		\[
		C(y,t,\zeta)\,\p_\zeta^\gamma\p_y^{\alpha'}\p_t^{a'}\widetilde W_f,
		\quad
		|\alpha'|+a'<|\alpha|+a,
		\]
		where the coefficients $C(y,t,\zeta)$ are derivatives of the coefficients of $\widetilde L_{y,t}$. For any fixed differentiated equation, these coefficients are bounded by some finite power of $t^{-1}$ in the required $C^{m,\alpha_0}(B_1)$ norms.
		
		We prove by induction on $|\alpha|+a$ that
		\begin{equation}\label{eq:scaled-flat-all}
			\|\p_y^\alpha\p_t^a\widetilde W_f\|_{C^{m,\alpha_0}(B_1)}
			\le C_{\alpha,a,m,M}t^M
		\end{equation}
		for every $m$ and every $M$. The case $|\alpha|+a=0$ is \eqref{eq:scaled-Schauder-0}. For the induction step, the first term on the right-hand side of \eqref{eq:differentiated-scaled-equation} is $O(t^M)$ by \eqref{eq:scaled-R-flat}, after increasing the flatness order of $R$. The error term $\mathcal E_{\alpha,a}$ is a finite sum of lower parameter derivatives of $\widetilde W_f$, multiplied by coefficients with only finite powers of $t^{-1}$. By the induction hypothesis, the lower derivatives are $O(t^{M'})$ for every $M'$. Taking $M'$ sufficiently large absorbs the finite losses from the differentiated coefficients. Schauder estimates for \eqref{eq:differentiated-scaled-equation} give \eqref{eq:scaled-flat-all}.
		
		It remains to return to the original variables. Let $D_t^z$ denote differentiation with $y$ and the original variable $z$ fixed. Let $D_t^\zeta$ denote differentiation with $y$ and the rescaled variable $\zeta$ fixed. Since $\zeta=z/\sqrt t$, when $z$ is fixed one has $D_t^z\zeta=-(2t)^{-1}\zeta$. By the chain rule,
		\begin{equation}\label{eq:t-fixed-z}
			D_t^z W_f(y,t,z)=
			\Big(D_t^\zeta\widetilde W_f-\frac{1}{2t}\zeta\cdot\nabla_\zeta\widetilde W_f\Big)
			\big(y,t,\frac{z}{\sqrt t}\big).
		\end{equation}
		Also,
		\begin{equation}\label{eq:z-derivative-loss}
			\p_z^\beta W_f(y,t,z)
			=
			t^{-|\beta|/2}\p_\zeta^\beta\widetilde W_f
			\big(y,t,\frac{z}{\sqrt t}\big).
		\end{equation}
		Iterating \eqref{eq:t-fixed-z} and \eqref{eq:z-derivative-loss}, every mixed derivative $\p_y^\alpha(D_t^z)^a\p_z^\beta W_f(y,t,z)$ is a finite sum of terms of the form
		\begin{equation}\label{eq:back-transform-terms}
			t^{-N_0}C(\zeta)\p_y^{\alpha'}(D_t^\zeta)^{a'}\p_\zeta^{\beta'}\widetilde W_f(y,t,\zeta),
			\quad
			\zeta=\frac{z}{\sqrt t},
			\quad
			|\zeta|<1.
		\end{equation}
		Here $N_0$ depends only on $\alpha,a,\beta$, and $C(\zeta)$ is smooth for $|\zeta|<1$. Since \eqref{eq:scaled-flat-all} holds with an arbitrary power $t^M$, taking $M$ larger before applying \eqref{eq:back-transform-terms} absorbs all finite losses. This gives \eqref{eq:Wf-flat}. The estimate also shows that the extension obtained by setting $W_f=0$ on $\mathcal C$ has all mixed derivatives equal to zero there. Thus, the zero extension is smooth across $\mathcal C$. Since the estimates above are obtained in boundary H\"older norms on $\ol{B_1}$, the same flatness estimates hold up to the lateral boundary $|z|^2=t$ for $t>0$.
	\end{proof}

	\begin{remark}\label{rem:Borel-theorem}
		We use the following standard form of Borel's theorem. If one prescribes an arbitrary formal Taylor series in variables $(t,z)$ whose coefficients are smooth functions of an additional parameter $y'$, then there exists a smooth function of $(y',t,z)$ realizing this formal Taylor series. In particular, if two smooth functions have the same prescribed Taylor series in $(t,z)$ at $(t,z)=(0,0)$ for every $y'$, then their difference is flat on the set $\{(y',t,z):t=0,\ z=0\}$.
	\end{remark}
	
	\begin{lemma}\label{lem:shrinking-section}
		Let $U\subset\R^m$ be open, let $0\in U$, and let $\mathcal D=\{(y,z):|z|^2<\beta(y)\}$, where $\beta\in C^\infty(U)$, $\beta(0)=0$, and $d\beta(0)\ne0$. Let $L_y$ be a smooth family of uniformly elliptic operators in the $z$ variables,
		\begin{equation}\label{eq:shrinking-Ly}
			L_y=a^{ij}(y,z)\p_{z_i z_j}+a^i(y,z)\p_{z_i}.
		\end{equation}
		Let $F$ be smooth near $(0,0)$. For $\beta(y)>0$, let $W(y,\cdot)$ solve
		\begin{equation}\label{eq:shrinking-problem}
			\begin{cases}
				L_yW(y,\cdot)=F(y,\cdot) & \text{ in } |z|^2<\beta(y), \\
				W(y,\cdot)=0 &\text{ on } |z|^2=\beta(y).
			\end{cases}
		\end{equation}
		Then $W$ is $C^\infty$ up to the boundary near $(0,0)$, in the sense that every derivative in the variables $(y,z)$ has a continuous extension to $\ol{\mathcal D}$ near $(0,0)$.
	\end{lemma}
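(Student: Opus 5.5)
First I will straighten the base variable. Since $d\beta(0)\ne0$, after a linear change of the $y$ variables I can write $y=(y',t)$ with $t=\beta(y)$ as one coordinate; this is a smooth change of the parameters alone. The domain becomes $\{(y',t,z):|z|^2<t\}$ and the operators become $L_{y',t}$ of the form \eqref{eq:Lyt}. Smoothness up to $\ol{\mathcal D}$ near $(0,0)$ is then equivalent to smoothness in $(y',t,z)$ up to the boundary $\{|z|^2=t,\ t\ge0\}$, including the zero-radius set $\mathcal C$. Away from $\mathcal C$, that is for $t\ge t_1>0$, this is exactly the parameter-dependent Schauder argument already used in Lemma~\ref{lem:b-sign}. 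So everything reduces to a neighbourhood of $\mathcal C$.

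The strategy is to split $W$ into a formal part and a flat part. I will construct a smooth function $\Psi(y',t,z)$ (a formal solution) such that:
\begin{enumerate}[\rm(a)]
\item $L_{y',t}\Psi-F=:-R$ is flat on $\mathcal C$ in the sense of \eqref{eq:R-flat};
\item $\Psi=0$ exactly on $|z|^2=t$.
\end{enumerate}
Then $W-\Psi$ solves \eqref{eq:flat-problem} with flat right-hand side $R$. By Lemma~\ref{lem:flat-R}, $W-\Psi$ is flat on $\mathcal C$, and its zero extension is smooth up to the boundary. It follows that $W=\Psi+(W-\Psi)$ is smooth up to $\ol{\mathcal D}$.

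To enforce (b) exactly, I seek $\Psi=(|z|^2-t)\,G(y',t,z)$ with $G$ smooth. The equation $L(( |z|^2-t)G)=F$ becomes
\[
(|z|^2-t)\,LG+2a^{ij}\bigl(2z_i\p_{z_j}G+\delta_{ij}G\bigr)+2a^iz_iG=F,
\]
where the constant term $2\operatorname{tr}(a)\,G$ is invertible. I will solve for the Taylor series of $G$ in $(t,z)$ at $(t,z)=(0,0)$, with coefficients smooth in $y'$, order by order. Assign weight $2$ to $t$ and weight $1$ to $z$. Then the operator $G\mapsto 2a^{ij}(0)(\delta_{ij}G+2z_i\p_{z_j}G)+(|z|^2-t)a^{ij}(0)\p_{ij}G$ maps weighted-homogeneous polynomials of degree $d$ to themselves; all remaining terms raise the weight. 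So at each weight I must invert a linear map on the finite-dimensional space of weighted-homogeneous polynomials of degree $d$.

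This invertibility is the main obstacle. It amounts to unique solvability of the Dirichlet problem for the frozen operator $a^{ij}(y',0,0)\p_{ij}$ on the ball $|z|^2<t$ within polynomial data. I would argue it as follows. Given polynomial data of weighted degree $d$, after a linear change making $a^{ij}(0)=\delta_{ij}$ and rescaling $z=\sqrt t\,\zeta$, the Dirichlet problem $\Delta_\zeta$ on the unit ball with polynomial right-hand side has a unique polynomial solution vanishing on the sphere. This is standard: $\Delta$ is a bijection from $(|\zeta|^2-1)\mathcal P_m$ onto $\mathcal P_m$, and injectivity follows from the maximum principle. Scaling back gives a weighted-homogeneous solution, so the map is injective and hence bijective. (The change to $a^{ij}=\delta_{ij}$ changes the ball into an ellipsoid, so I would work instead with a $y'$-dependent linear change of $z$, or handle general ellipsoids, for which the polynomial bijection still holds.)

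With the formal series determined, Borel's theorem (Remark~\ref{rem:Borel-theorem}) realizes $G$ smoothly with the given coefficients. Then $L\Psi-F$ has vanishing Taylor series in $(t,z)$ at $\mathcal C$, so it is bounded by $C_N(t+|z|)^N$ for every $N$. This implies the bound $(t+|z|^2)^N$ with a loss of order, which is harmless, and the analogous estimates hold for derivatives. This verifies \eqref{eq:R-flat} and completes the argument.
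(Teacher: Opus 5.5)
Your proposal is correct and follows essentially the same route as the paper. You straighten $t=\beta(y)$, make the ansatz $(|z|^2-t)\cdot(\text{smooth})$, and solve for its Taylor series in the weighted grading ($t$ of weight $2$, $z$ of weight $1$), inverting the frozen operator on each graded piece via the maximum principle; you then realize the series by Borel's theorem and absorb the flat remainder with Lemma~\ref{lem:flat-R}.

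There are two cosmetic points. The change $y\mapsto(y',t)$ is a smooth nonlinear change of parameters given by the inverse function theorem, not a linear one. Also, the normalization $a^{ij}(0)=\delta_{ij}$ is unnecessary: the maximum principle for the constant-coefficient operator $a^{ij}(y',0,0)\p_{z_iz_j}$ on the ball $|z|^2<t$ gives injectivity directly, which is exactly how the paper argues.
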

	
	\begin{proof}
		Since $d\beta(0)\ne0$, we use $t=\beta(y)$ as one of the base coordinates and write the remaining base variables as $y'$. Thus, the collapsing sections are $|z|^2<t$, $t\ge0$. Set $r=|z|^2$. Since second derivatives commute, only the symmetric part of the principal coefficients enters the operator. Thus, replacing $a^{ij}$ by $(a^{ij}+a^{ji})/2$, we may assume that $a^{ij}=a^{ji}$.
		
		We first construct a smooth approximate solution whose boundary value vanishes on $r=t$. We look for it in the form $W_0=(r-t)\Psi$. Define $\mathcal T\Psi:=L_{y',t}\big((r-t)\Psi\big)$, then a direct computation gives
		\begin{equation}\label{eq:T-expanded}
			\mathcal T\Psi=(r-t)L_{y',t}\Psi+2a^{ij}(y',t,z)(\p_{z_i}r)\p_{z_j}\Psi+(L_{y',t}r)\Psi.
		\end{equation}
		At $t=0$, $z=0$, one has
		\begin{equation}\label{eq:Lr-positive}
			\left. L_{y',0}r\right|_{z=0}=2\tr(a^{ij}(y',0,0))>0.
		\end{equation}
		
		We now construct the Taylor series of $\Psi$ in the variables $(t,z)$, with coefficients depending smoothly on $y'$. For each integer $N\ge0$, let $\mathcal H_N$ be the finite-dimensional space spanned by the monomials $t^a z^\gamma$, $2a+|\gamma|=N$. Let $\Pi_N$ denote the projection of a formal Taylor series in $(t,z)$ onto the sum of these monomials.
		
		Set $A^{ij}(y'):=a^{ij}(y',0,0)$, $L_{0,y'}:=A^{ij}(y')\p_{z_i z_j}$, and $\mathcal T_{0,y'}\Psi:=L_{0,y'}\big((r-t)\Psi\big)$. We first record the following elementary fact. If $P\in\mathcal H_N$, then
		\begin{equation}\label{eq:PiN-T}
			\Pi_N(\mathcal TP)=\mathcal T_{0,y'}P.
		\end{equation}
		Indeed, the part of $\mathcal T$ in which the second-order coefficients are frozen at $(t,z)=(0,0)$ and only the second-order operator is kept is exactly $\mathcal T_{0,y'}$. All other terms have no contribution to $\Pi_N(\mathcal TP)$. More explicitly, terms coming from $a^{ij}(y',t,z)-A^{ij}(y')$ contain at least one factor of either $z$ or $t$, and hence contribute only to monomials with $2a+|\gamma|>N$. The first-order part of $L_{y',t}$ also contributes only to monomials with $2a+|\gamma|>N$: in $(r-t)a^i\p_{z_i}P$ the factor $(r-t)$ contributes two powers relative to the relation $r=t$, while $\p_{z_i}$ lowers the $z$ degree by one; in $a^i(\p_{z_i}r)P$ the factor $\p_{z_i}r=2z_i$ raises the $z$ degree by one. This proves \eqref{eq:PiN-T}.
		
		We next show that $\mathcal T_{0,y'}:\mathcal H_N\to\mathcal H_N$ is an isomorphism for every $N$. Suppose that $P\in\mathcal H_N$ and $\mathcal T_{0,y'}P=0$. For each fixed $t>0$, set $Z(t,z)=(r-t)P(t,z)$, then
		\[
		\begin{cases}
			L_{0,y'}Z(t,\cdot)=0 & \text{ in } |z|^2<t,\\
			Z(t,\cdot)=0 & \text{ on } |z|^2=t.
		\end{cases}
		\]
		The operator $L_{0,y'}$ is uniformly elliptic with constant coefficients in the $z$ variables. By the maximum principle in the ball $|z|^2<t$, one has $Z(t,\cdot)=0$ for every $t>0$. Since $r-t$ is not identically zero in the open set $\{t>0,\ |z|^2<t\}$, this implies that $P=0$ there, and then $P$ is the zero polynomial. Thus, $\mathcal T_{0,y'}$ is injective on the finite-dimensional space $\mathcal H_N$, and it is an isomorphism.
		
		We now determine the Taylor coefficients of $\Psi$ recursively. Write the formal Taylor expansion of $F$ in the form $F\sim \sum_{N=0}^\infty F_N$ with $F_N\in\mathcal H_N$, where the coefficients of each $F_N$ are smooth functions of $y'$. Suppose that $\Psi_0,\ldots,\Psi_{N-1}$ have already been chosen, with $\Psi_\ell\in\mathcal H_\ell$, so that
		\[
		\Pi_\ell\Big(\mathcal T\sum_{j=0}^{N-1}\Psi_j-F\Big)=0 \quad\text{for }0\le \ell\le N-1.
		\]
		At level $N$, the only unknown contribution is $\mathcal T_{0,y'}\Psi_N$, by \eqref{eq:PiN-T}. All other terms of level $N$ are already determined by $\Psi_0,\ldots,\Psi_{N-1}$ and by the Taylor coefficients of the operator. Therefore, we choose $\Psi_N\in\mathcal H_N$ as the unique solution of
		\[
		\mathcal T_{0,y'}\Psi_N =F_N-\Pi_N\Big(\mathcal T\sum_{j=0}^{N-1}\Psi_j\Big).
		\]
		This determines $\Psi_N$ uniquely. In a fixed monomial basis of $\mathcal H_N$, the matrix of $\mathcal T_{0,y'}$ depends smoothly on $y'$, and it is invertible. Hence, its inverse also depends smoothly on $y'$, and the coefficient $\Psi_N$ depends smoothly on $y'$.
		
		By induction in $N$, we obtain a formal Taylor series $\sum_{N=0}^\infty \Psi_N$ in the variables $(t,z)$, with coefficients smooth in $y'$, such that the formal Taylor series of $\mathcal T\Psi$ agrees with that of $F$. By Borel's theorem with parameters, there exists a smooth function $\Psi_{\rm app}(y',t,z)$ whose Taylor series in $(t,z)$ at $(0,0)$ is this formal series. This implies that $F-\mathcal T\Psi_{\rm app}$ is flat at $t=0$, $z=0$, with all derivatives in $(y',t,z)$.
		
		Set $W_0=(r-t)\Psi_{\rm app}$, then $W_0=0$ on $r=t$, and
		\[
		L_{y',t}W_0=\mathcal T\Psi_{\rm app}=F-R,
		\]
		where $R:=F-\mathcal T\Psi_{\rm app}$ satisfies the flatness condition in Lemma~\ref{lem:flat-R} at the collapsed set $\{t=0,\ z=0\}$.
		
		Let $W_f=W-W_0$, then $W_f$ solves
		\[
		\begin{cases}
			L_{y',t}W_f=R & \text{in } |z|^2<t,\\
			W_f=0 & \text{on } |z|^2=t.
		\end{cases}
		\]
		By Lemma \ref{lem:flat-R}, $W_f$ is flat at the zero-radius set, together with all mixed derivatives, up to the lateral boundary $|z|^2=t$. Hence, the zero extension of $W_f$ is smooth through $t=0$, $z=0$. Since $W_0$ is smooth, $W=W_0+W_f$ is smooth up to the collapsed section.
		
		Away from the zero-radius set, the boundary $|z|^2=t$ is an ordinary smooth moving boundary. Standard parameter-dependent elliptic boundary regularity gives smoothness there. Combining this with the collapsed-section analysis proves the claimed smooth extension near $(0,0)$.
	\end{proof}
	
	\begin{proposition}\label{prop:smooth-corrections}
				Let $\Omega\subset\R^n$ be a bounded smooth uniformly convex domain. Fix $E\in\Gr(k-1,n)$ and set $V=E^\perp$. If $f\in C^\infty(\ol\Omega)$, then the functions $w_E^f$ and $b_E$ defined section by section in \eqref{eq:fiber-w} and \eqref{eq:fiber-b} extend to functions in $C^\infty(\ol\Omega)$. Moreover,
		\begin{equation}\label{eq:w_E^f}
			\begin{cases}
				\Delta_Vw_E^f=f & \text{ in }\Omega, \\
				w_E^f =0 &\text{ on }\p \Omega,
			\end{cases}
		\end{equation}
		and 
		\begin{equation}\label{eq:b_E}
			\begin{cases}
				\Delta_Vb_E=1 & \text{ in }\Omega, \\
				b_E=0 &\text{ on }\p\Omega.
			\end{cases}
		\end{equation}
	\end{proposition}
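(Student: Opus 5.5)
The plan is a localization argument: smoothness of $w_E^f$ and $b_E$ on $\ol\Omega$ is a local property, so it is enough to show that every point $x_0\in\ol\Omega$ has a neighborhood $\mathcal U$ on which the sectionwise function coincides with a $C^\infty$ function on $\ol\Omega\cap\mathcal U$. We treat $w_E^f$ only; $b_E$ is the special case $f\equiv1$. By Lemma~\ref{lem:section-geometry}\ref{item_2_sec_geo}, every point of $\ol\Omega$ lies either over $\Omega_E$ or is the single point of a degenerate section over $\p\Omega_E$. We therefore split into three cases: interior points of $\Omega$, non-glancing boundary points, and glancing boundary points. These cases are exhaustive, because a point $x_0\in\p\Omega$ with $P_Ex_0\in\p\Omega_E$ must be glancing by the supporting hyperplane argument in Lemma~\ref{lem:section-geometry}.

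\emph{Interior and non-glancing points.} Let $x_0\in\Omega$, or $x_0\in\Gamma_E$, and set $y_0=P_Ex_0$. In either case $y_0\in\Omega_E$, since a non-glancing boundary point sees interior points of $\Omega$ along $P_V\nu$. By Lemma~\ref{lem:section-geometry}\ref{item_1_sec_geo} the whole compact set $\p\Omega\cap(y_0+V)$ is non-glancing. We then repeat verbatim the argument from the proof of Lemma~\ref{lem:b-sign}:
\begin{enumerate}[\rm(1)]
	\item implicit-function charts give a smooth family of diffeomorphisms from a reference section onto $\Omega_{E,y}$, for $y$ in a neighborhood $O\Subset\Omega_E$ of $y_0$;
	\item pulling back the equation yields uniformly elliptic Dirichlet problems with coefficients and right-hand side $f(y+\cdot)$ depending smoothly on $y$;
	\item parameter-dependent Schauder theory, or the implicit function theorem for the Dirichlet realization, gives joint smoothness in $(y,z)$ up to the section boundary.
\end{enumerate}
Since $\Omega\cap P_E^{-1}(O)$ is exactly the union of these sections, $w_E^f$ is smooth on $\ol\Omega\cap P_E^{-1}(O)$, and this set is a neighborhood of $x_0$ in $\ol\Omega$.

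\emph{Glancing points.} Let $x_0\in\mathcal G_E$. Lemma~\ref{lem:normal-form} supplies a fiber-preserving diffeomorphism $\Phi$ under which $\Omega\cap\mathcal U$ becomes $\{|z|^2<\beta(y)\}$ with $d\beta(y_0)\ne0$, and $\Delta_V$ becomes $L_y$. We must check that for each $y$ with $\beta(y)>0$, the local ball $\{|z|^2<\beta(y)\}$ is the \emph{entire} section $\Omega_{E,y}$ pulled back, and not just a piece of it. The argument is as follows. Because $\Phi(y,\cdot)$ is a diffeomorphism onto an open subset of $y+V$ and the image ball has closure inside $\mathcal U$, this image is relatively open and closed in the convex, hence connected, set $\Omega\cap(y+V)$, for $\mathcal U$ and $O$ small enough. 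For the closedness we shrink $O$ so that $\beta(y)$ is small compared with the radius of $B$. The boundary sphere then maps into $\p\Omega$, so the image equals the section.

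Consequently $\widetilde w=w_E^f\circ\Phi$ solves exactly the problem \eqref{eq:shrinking-problem} with $F=f\circ\Phi$. Lemma~\ref{lem:shrinking-section} then shows that $\widetilde w$ is $C^\infty$ up to the boundary, including through the collapsed point. Composing with $\Phi^{-1}$ gives smoothness of $w_E^f$ on $\ol\Omega\cap\mathcal U$.

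\emph{Conclusion.} The local smooth extensions all agree on overlaps, since each equals the uniquely defined sectionwise solution on $\Omega$. They therefore patch to a function in $C^\infty(\ol\Omega)$. The equations \eqref{eq:w_E^f} and \eqref{eq:b_E} hold pointwise in $\Omega$ because $\Delta_V=\tr(P_VD^2)$ is computed along sections. The boundary condition holds because every $x\in\p\Omega$ either lies on the boundary of a nondegenerate section, where the value is $0$, or is a degenerate limit point, where the value is $0$ by continuity.

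The main obstacle is the glancing case, and most of its analytic work is already contained in Lemmas~\ref{lem:normal-form}, \ref{lem:flat-R} and \ref{lem:shrinking-section}. The delicate point left in this proof is the identification of the local normal-form ball with the full global section near $x_0$. That identification is where uniform convexity, via connectedness of convex sections, has to be used carefully.
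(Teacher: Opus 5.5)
Your proposal is correct and follows essentially the same route as the paper: the parameter-dependent Dirichlet theory on smooth families of sections handles interior and non-glancing points, and at glancing points you use the normal form of Lemma~\ref{lem:normal-form} together with Lemma~\ref{lem:shrinking-section}, then patch. The only variation is in identifying the local ball $\{|z|^2<\beta(y)\}$ with the full section. The paper uses $\ol\Omega\cap(y_0+V)=\{x_0\}$ and a compactness argument to get $\ol\Omega\cap(y+V)\subset\mathcal U$ for nearby $y$, whereas your open--closed argument via connectedness of the convex section, with $\{|z|^2\le\beta(y)\}\subset B$ after shrinking $O$, is equally valid.
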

	
	\begin{proof}
		We first treat all points whose section parameter lies in $\Omega_E$. Fix $y_0\in\Omega_E$. By Lemma~\ref{lem:section-geometry}\ref{item_1_sec_geo}, every point of $\p\Omega\cap(y_0+V)$ is non-glancing. Since $\p\Omega\cap(y_0+V)$ is compact, there are a neighborhood $O_0\Subset\Omega_E$ of $y_0$ and a constant $\lambda>0$ such that $|P_V\nu|\ge \lambda$ on $\p\Omega\cap(y+V)$ for every $y\in O_0$. The implicit function theorem applied to a defining function of $\p\Omega$ then gives boundary charts for the domains $\Omega_{E,y}$ depending smoothly on $y\in O_0$. Equivalently, the sections $\Omega_{E,y}$ form a smooth family of bounded domains in the affine planes $y+V$.
		
		In these smooth families of domains, the Dirichlet problems for $\Delta_V$ have smooth coefficients, smooth right-hand sides, and zero boundary values. Standard parameter-dependent elliptic regularity gives that $w_E^f$ and $b_E$ depend smoothly on $y$ and on the fiber variable, up to the section boundary. This proves smoothness near all points lying over $O_0$. Since $y_0\in\Omega_E$ was arbitrary, this covers the non-glancing part and all interior points of $\Omega$.
		
		It remains to treat glancing points. Let $x_0\in\mathcal G_E$ and set $y_0=P_Ex_0$. Since $x_0\in\p\Omega$ and $P_Ex_0=y_0$, one has $y_0\in \ol{\Omega_E}$. If $y_0\in\Omega_E$, then Lemma \ref{lem:section-geometry}\ref{item_1_sec_geo} would imply that every point of $\p\Omega\cap(y_0+V)$ is non-glancing. This contradicts $x_0\in\mathcal G_E$. Hence $y_0\in\p\Omega_E$.
		
		By Lemma~\ref{lem:section-geometry}\ref{item_2_sec_geo}, the set $\ol\Omega\cap(y_0+V)$ consists of at most one point. Since it contains $x_0$, it is exactly $\{x_0\}$. Apply Lemma~\ref{lem:normal-form} at $x_0$, and let $\mathcal U$ be the corresponding normal-form neighborhood. There is a neighborhood $O$ of $y_0$ in $E$ such that
		\begin{equation}\label{eq:nearby-sections-in-chart}
			\ol\Omega\cap(y+V)\subset \mathcal U
			\quad
			\text{for all } y\in O\cap\ol\Omega_E.
		\end{equation}
		Indeed, otherwise one could find $y_\ell\to y_0$ and $x_\ell\in\ol\Omega\cap(y_\ell+V)$ with $x_\ell\notin \mathcal U$. Passing to a subsequence gives a point in $\ol\Omega\cap(y_0+V)=\{x_0\}$ outside $\mathcal U$, a contradiction.
		
		In the fiber-preserving coordinates given by Lemma~\ref{lem:normal-form}, the nearby full sections are $|z|^2<\beta(y)$, and the operator $\Delta_V$ pulls back to a smooth uniformly elliptic family $L_y$ in the $z$ variables. Lemma \ref{lem:shrinking-section} applies to the transformed source $f\circ\Phi$ and gives smoothness of $w_E^f$ near $x_0$. Applying the same lemma with the transformed right-hand side $1$ gives smoothness of $b_E$. The local solutions agree with the original sectionwise Dirichlet solutions on all nearby nondegenerate sections, since \eqref{eq:nearby-sections-in-chart} ensures that the local section is the full section. The local extensions, therefore, patch with the non-glancing extensions. The equations and boundary conditions follow by continuity from the corresponding sectionwise Dirichlet problems on nondegenerate sections.
	\end{proof}
	
	The flux identity below is the bridge from the boundary asymptotics to the Radon transform.
	
	\begin{lemma}\label{lem:flux}
		Let $y\in\Omega_E$ be such that $\Omega_{E,y}$ is nondegenerate, then
		\begin{equation}\label{eq:flux}
			\int_{\Omega\cap(y+V)} f\,d\Hq
			=\int_{\p\Omega\cap(y+V)} |P_V\nu(x)|\p_\nu w_E^f(x)\,dS_{q-1}(x).
		\end{equation}
	\end{lemma}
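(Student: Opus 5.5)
The plan is to apply the divergence theorem on the single $q$-dimensional section $\Omega_{E,y}\subset y+V$ and then convert the intrinsic conormal derivative into the ambient normal derivative, as was done for $b_E$ in Lemma~\ref{lem:b-sign}.

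First, regularity of the section. Fix $y\in\Omega_E$. By Lemma~\ref{lem:section-geometry}\ref{item_1_sec_geo}, every point of $\p\Omega\cap(y+V)$ is non-glancing, so $|P_V\nu|>0$ there. The implicit function theorem applied to the restriction of a defining function $\rho$ to $y+V$ then shows that $\Omega_{E,y}$ is a bounded domain in $y+V$ with smooth boundary $\p\Omega_{E,y}=\p\Omega\cap(y+V)$. This uses the non-glancing condition $D_z\rho=P_V\nabla\rho\neq0$. The outward unit conormal within the plane is
\[
\nu_V=\frac{P_V\nu}{|P_V\nu|}.
\]
By Proposition~\ref{prop:smooth-corrections}, $w_E^f\in C^\infty(\ol\Omega)$, so its restriction to the closed section is smooth up to the boundary and satisfies $\Delta_Vw_E^f=f$ there.

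Second, the divergence theorem in the Euclidean space $y+V\cong\R^q$. It gives
\[
\int_{\Omega_{E,y}}f\,d\Hq=\int_{\Omega_{E,y}}\Delta_Vw_E^f\,d\Hq=\int_{\p\Omega_{E,y}}\nabla_Vw_E^f\cdot\nu_V\,dS_{q-1}.
\]

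Third, conversion of the boundary term. Since $w_E^f=0$ on $\p\Omega$ and $w_E^f$ is $C^1$ up to the boundary, its full gradient at $x\in\p\Omega$ is normal:
\[
\nabla w_E^f(x)=\p_\nu w_E^f(x)\,\nu(x).
\]
Hence
\[
\nabla_Vw_E^f\cdot\nu_V=P_V\nabla w_E^f\cdot\nu_V=\p_\nu w_E^f\,\frac{P_V\nu\cdot P_V\nu}{|P_V\nu|}=|P_V\nu|\,\p_\nu w_E^f.
\]
This is the same computation as \eqref{eq:normal-relation-b}, and substituting it into the boundary integral yields \eqref{eq:flux}.

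The only delicate point is the use of ambient smoothness of $w_E^f$ up to $\p\Omega$. This is needed so that $\p_\nu w_E^f$ makes sense and coincides with the sectionwise conormal data. On a nondegenerate section all boundary points are non-glancing, so only the standard parameter-dependent regularity part of Proposition~\ref{prop:smooth-corrections} is needed, not the glancing analysis. I therefore expect no real obstacle.
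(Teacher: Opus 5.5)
Your proposal is correct and follows the paper's own proof. Both apply the divergence theorem to $\Delta_V w_E^f=f$ on the section, then use $\nabla w_E^f=(\partial_\nu w_E^f)\nu$ on $\partial\Omega$ to get $\partial_{\nu_V}w_E^f=|P_V\nu|\,\partial_\nu w_E^f$. Your additional remarks are accurate justifications that the paper leaves implicit: the section boundary is smooth by the non-glancing condition, and only the non-glancing part of Proposition~\ref{prop:smooth-corrections} is needed.
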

	
	\begin{proof}
		By the fiberwise equation and the divergence theorem in the affine plane $y+V$,
		\begin{equation}\label{eq:flux-proof-1}
			\int_{\Omega_{E,y}} f\,d\Hq
			=\int_{\p\Omega_{E,y}}\p_{\nu_V}w_E^f\,dS_{q-1}.
		\end{equation}
		Since $w_E^f=0$ on $\p\Omega$, its full gradient is normal to $\p\Omega$, so $\nabla w_E^f=(\p_\nu w_E^f)\nu$ on $\p\Omega$. Using $\nu_V=P_V\nu/|P_V\nu|$, we get $\p_{\nu_V}w_E^f=|P_V\nu|\p_\nu w_E^f$. Substitution gives \eqref{eq:flux}.
	\end{proof}

	\section{Large rank \texorpdfstring{$k-1$}{k-1} asymptotics}\label{sec:asymp}
	
	We prove the asymptotics for the solution with boundary value $t\phi_E$. The algebraic input is the expansion of $\sigma_k$ near the rank $k-1$ matrix $P_E$.
	
	\begin{lemma}\label{lem:sigma-expansion}
		Let $H$ range over a bounded subset of $\Sym(n)$. As $t\to\infty$,
		\begin{equation}\label{eq:sigma-j-expansion}
			\sigma_j(tP_E+t^{1-k}H)=\binom{k-1}{j}t^j+O(t^{j-k})
			\quad \text{for } 1\le j\le k-1,
		\end{equation}
		and
		\begin{equation}\label{eq:sigma-k-expansion}
			\sigma_k(tP_E+t^{1-k}H)=\tr(P_VH)+t^{-k}Q_E(H)+O(t^{-2k}).
		\end{equation}
		Here, $Q_E$ is a homogeneous polynomial of degree two in the entries of $H$, and the remainders are uniform when $H$ ranges over bounded sets.
	\end{lemma}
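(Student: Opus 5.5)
The plan is to expand $\sigma_j$ of a sum through the polarization (mixed discriminant) expansion in the block decomposition $\R^n=E\oplus V$. Since $\sigma_j$ is a homogeneous polynomial of degree $j$ in the matrix entries, for any $A,B\in\Sym(n)$ we have
\begin{equation*}
\sigma_j(A+B)=\sum_{m=0}^{j}\binom{j}{m}\,\sigma_j\big(\underbrace{A,\ldots,A}_{j-m},\underbrace{B,\ldots,B}_{m}\big),
\end{equation*}
where $\sigma_j(\cdot,\ldots,\cdot)$ is the fully polarized symmetric multilinear form. We take $A=tP_E$ and $B=t^{1-k}H$. Each term is then $t^{j-m}\,t^{m(1-k)}$ times a coefficient $c_{j,m}(H)$, which is a polynomial in $H$, homogeneous of degree $m$ and hence bounded on bounded sets. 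The exponent is $j-mk$. The whole lemma therefore reduces to identifying the coefficients with $m=0$ and $m=1$, and bounding everything else crudely.

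The key algebraic fact is that $\sigma_j(P_E)=\binom{k-1}{j}$, because the eigenvalues of $P_E$ are $1$ with multiplicity $k-1$ and $0$ otherwise. In particular $\sigma_k(P_E)=0$.

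\emph{Case $j\le k-1$.} The $m=0$ term equals $\binom{k-1}{j}t^j$. All terms with $m\ge1$ have exponent at most $j-k$, uniformly for bounded $H$, which gives \eqref{eq:sigma-j-expansion}.

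\emph{Case $j=k$.} The $m=0$ term vanishes since $\sigma_k(P_E)=0$. The $m=1$ term has exponent $0$ and coefficient $D\sigma_k(P_E)[H]$. The $m=2$ term has exponent $-k$, and its coefficient $Q_E(H):=\binom{k}{2}\sigma_k(P_E,\ldots,P_E,H,H)$ is quadratic in $H$. The terms with $m\ge3$ have exponent at most $-2k$, which gives the remainder $O(t^{-2k})$.

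It remains to verify the first variation identity \eqref{eq:first_variation_id}, namely $D\sigma_k(P_E)[H]=\tr(P_VH)$. Choose an orthonormal basis adapted to $E\oplus V$, so that $P_E=\mathrm{diag}(I_{k-1},0)$. Then $\sigma_k(P_E+sH)$ is the sum of the principal $k\times k$ minors of $P_E+sH$.
\begin{itemize}
\item A principal minor indexed by a set $I$ with $|I|=k$ contains at least one index from $V$.
\item If $I$ contains exactly one $V$-index $\alpha$, then $I=\{1,\ldots,k-1,\alpha\}$. Expanding the minor to first order in $s$ gives $s\,H_{\alpha\alpha}+O(s^2)$, since the only way to get a degree-one term is to take the diagonal $1$'s in the $E$-block and the entry $sH_{\alpha\alpha}$.
\item If $I$ contains two or more $V$-indices, the corresponding rows are $O(s)$ each, so the minor is $O(s^2)$.
\end{itemize}
Summing over $\alpha$ yields $s\sum_{\alpha}H_{\alpha\alpha}=s\,\tr(P_VH)$.

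The only real care point is this minor computation together with the bookkeeping of the exponents $j-mk$. Uniformity in $H$ is automatic, because each coefficient $c_{j,m}(H)$ is a fixed polynomial in $H$ and is therefore bounded on bounded sets.
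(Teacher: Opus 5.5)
Your proposal is correct and follows essentially the paper's route. Your polarization expansion with exponents $t^{j-mk}$ is the coefficient-by-coefficient form of the paper's reduction $\sigma_j(tP_E+t^{1-k}H)=t^j\sigma_j(P_E+sH)$ with $s=t^{-k}$, and your principal-minor proof of $D\sigma_k(P_E)[H]=\tr(P_VH)$ (sets $\{1,\ldots,k-1,\alpha\}$ contribute $sH_{\alpha\alpha}$, all other minors are $O(s^2)$) matches the paper's; the only cosmetic gain is the explicit formula $Q_E(H)=\binom{k}{2}\sigma_k(P_E,\ldots,P_E,H,H)$ with an exact finite expansion instead of a Taylor remainder.
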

	
	\begin{proof}
		The statement is invariant under orthogonal changes of coordinates. We may assume that
		\begin{equation}\label{eq:E-standard-coordinates}
			E=\operatorname{span}\{e_1,\ldots,e_{k-1}\},
			\quad
			V=\operatorname{span}\{e_k,\ldots,e_n\}.
		\end{equation}
		In these coordinates,
		\begin{equation}\label{eq:PE-standard}
			P_E=\operatorname{diag}(\underbrace{1,\ldots,1}_{k-1},0,\ldots,0).
		\end{equation}
		Set $s=t^{-k}$, by homogeneity, then 
		\begin{equation}\label{eq:homogeneous-reduction}
			\sigma_j(tP_E+t^{1-k}H)=t^j\sigma_j(P_E+sH).
		\end{equation}
		
		We first consider $1\le j\le k-1$. Since $P_E$ has exactly $k-1$ eigenvalues equal to $1$ and the remaining eigenvalues equal to $0$, $\sigma_j(P_E)=\binom{k-1}{j}$. Since $\sigma_j$ is a polynomial in the entries of the matrix, and $H$ ranges in a bounded set,
		\begin{equation}\label{eq:sigma-j-taylor}
			\sigma_j(P_E+sH)=\binom{k-1}{j}+O(s).
		\end{equation}
		Multiplying by $t^j$ and using $s=t^{-k}$ gives \eqref{eq:sigma-j-expansion}.
		
		It remains to compute the first nonzero term in $\sigma_k(P_E+sH)$. We use the principal minor formula
		\begin{equation}\label{eq:principal-minor-formula}
			\sigma_k(A)=\sum_{|I|=k}\det A_{I,I},
		\end{equation}
		where $A_{I,I}$ denotes the principal submatrix with row and column set $I$. The only principal minors that can contribute to the coefficient of $s$ are those whose index set contains all $k-1$ directions in $E$ and one direction in $V$. These index sets are
		\begin{equation}\label{eq:leading-minors}
			I_\alpha=\{1,\ldots,k-1,\alpha\},
			\quad
			\alpha=k,\ldots,n.
		\end{equation}
		For such an $I_\alpha$, the first $k-1$ indices belong to $E$, and the last index $\alpha$ belongs to $V$. Since $P_E$ is the orthogonal projection onto $E$, it is the identity on the first $k-1$ directions, and it is zero on the $V$ direction $e_\alpha$. Hence,
		\begin{equation}\label{eq:minor-PE}
			(P_E)_{I_\alpha,I_\alpha}
			=
			\operatorname{diag}(1,\ldots,1,0).
		\end{equation}
		The final zero is the entry corresponding to the $V$ direction $e_\alpha$.
		
		For this minor,
		\begin{equation}\label{eq:minor-expansion}
			\det\big((P_E+sH)_{I_\alpha,I_\alpha}\big)=sH_{\alpha\alpha}+O(s^2).
		\end{equation}
		Indeed, at $s=0$ the matrix has diagonal entries $1,\ldots,1,0$. The only cofactor that is nonzero is the cofactor of the zero diagonal entry, and that cofactor is the product of the $k-1$ ones. Thus, the linear term is exactly $H_{\alpha\alpha}$.
		
		If a $k$-element set $I$ does not contain all indices $1,\ldots,k-1$, then $(P_E)_{I,I}$ has rank at most $k-2$. The first derivative of the determinant of such a matrix is zero because all cofactors vanish. These minors start only at order $s^2$.
		
		Adding the linear contributions from \eqref{eq:leading-minors}, the coefficient of $s$ in $\sigma_k(P_E+sH)$ is $\sum_{\alpha=k}^n H_{\alpha\alpha}=\tr(P_VH)$. If $D\sigma_k(P_E)[H]$ denotes the directional derivative
		\begin{equation}\label{eq:D-sigma-definition}
			D\sigma_k(P_E)[H]
			=
			\frac{d}{ds}\Big|_{s=0}\sigma_k(P_E+sH),
		\end{equation}
		then
		\begin{equation}\label{eq:first-var-sigmak-identity}
			D\sigma_k(P_E)[H]=\tr(P_VH).
		\end{equation}
		
		Since $\sigma_k(P_E)=0$, the Taylor expansion of $\sigma_k(P_E+sH)$ begins with the linear term just computed. The coefficient of $s^2$ is a homogeneous quadratic polynomial in $H$, which we denote by $Q_E(H)$. The remaining terms are $O(s^3)$ uniformly for $H$ in bounded sets. Thus,
		\begin{equation}\label{eq:sigma-k-taylor}
			\sigma_k(P_E+sH)
			=
			s\tr(P_VH)+s^2Q_E(H)+O(s^3).
		\end{equation}
		Multiplying by $t^k$ and using $s=t^{-k}$ gives
		\begin{equation}\label{eq:sigma-k-final}
			\sigma_k(tP_E+t^{1-k}H)
			=
			\tr(P_VH)+t^{-k}Q_E(H)+O(t^{-2k}).
		\end{equation}
		This proves \eqref{eq:sigma-k-expansion}.
	\end{proof}
	
	For the rest of this section, fix $E\in\Gr(k-1,n)$ and a positive source $f\in C^\infty(\ol\Omega)$. Recall that $\phi_E(x)=|P_Ex|^2/2$, so $D^2\phi_E=P_E$. By Proposition \ref{prop:smooth-corrections}, the fiberwise Poisson correction $w_E^f$ belongs to $C^\infty(\ol\Omega)$, satisfies $w_E^f=0$ on $\p\Omega$, and satisfies $\Delta_Vw_E^f=\tr(P_VD^2w_E^f)=f$ in $\Omega$.
	
	We define
	\begin{equation}\label{eq:U-def}
		U_{t,E}^f=t\phi_E+t^{1-k}w_E^f.
	\end{equation}
	This function has boundary value $t\phi_E$ on $\p\Omega$, and $D^2U_{t,E}^f=tP_E+t^{1-k}D^2w_E^f$. Applying Lemma \ref{lem:sigma-expansion} with $H=D^2w_E^f$ gives $\sigma_k(D^2U_{t,E}^f)=f+O(t^{-k})$. 
	
	\begin{lemma}\label{lem:admissibility}
		Let $W\in C^2(\ol\Omega)$ satisfy $\Delta_VW\ge c_1>0$, then $D^2(t\phi_E+t^{1-k}W)\in\Gk$, for all sufficiently large $t$.
	\end{lemma}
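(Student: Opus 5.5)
The plan is to verify the defining inequalities $\sigma_j(D^2(t\phi_E+t^{1-k}W))>0$ for $1\le j\le k$ directly, using the expansion in Lemma~\ref{lem:sigma-expansion}. Set $H(x)=D^2W(x)$. Since $W\in C^2(\ol\Omega)$ and $\ol\Omega$ is compact, $\{H(x):x\in\ol\Omega\}$ is a bounded subset of $\Sym(n)$. Hence the remainders in Lemma~\ref{lem:sigma-expansion} are uniform in $x\in\ol\Omega$, and
\[
D^2(t\phi_E+t^{1-k}W)=tP_E+t^{1-k}H(x).
\]

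For $1\le j\le k-1$, I will use \eqref{eq:sigma-j-expansion}:
\[
\sigma_j\big(tP_E+t^{1-k}H(x)\big)=\binom{k-1}{j}t^j+O(t^{j-k}).
\]
The binomial coefficient is at least $1$ because $j\le k-1$. The error is $O(t^{j-k})$, which is $o(t^j)$ uniformly in $x$. So these $\sigma_j$ are positive for all $t$ larger than some $T_1$ that is independent of $x$.

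For $j=k$, I will use \eqref{eq:sigma-k-expansion} together with $\tr(P_VH)=\Delta_VW$ from \eqref{eq:Delta-V}:
\[
\sigma_k\big(tP_E+t^{1-k}H(x)\big)=\Delta_VW(x)+t^{-k}Q_E(H(x))+O(t^{-2k})\ge c_1-Ct^{-k}.
\]
Here $C$ bounds $|Q_E|$ on the bounded set of Hessians, together with the uniform remainder constant. This quantity is at least $c_1/2$ once $t^{k}\ge 2C/c_1$. Taking $t$ larger than both thresholds gives $\lambda(D^2(t\phi_E+t^{1-k}W)(x))\in\Gk$ for every $x\in\ol\Omega$, which proves the lemma.

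The only real point is the uniformity of the remainders in $x$. This is exactly what the lemma provides for bounded sets of $H$, and boundedness comes from $W\in C^2(\ol\Omega)$. I do not expect a genuine obstacle. One remark: the membership $M\in\Gk$ is defined through the eigenvalues, but $\sigma_j(M)$ equals $\sigma_j(\lambda(M))$ by \eqref{eq:sigma-def-intro}, so checking the matrix quantities suffices.
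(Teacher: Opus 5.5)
Your proposal is correct and follows the paper's proof: for $1\le j\le k-1$ you get positivity from \eqref{eq:sigma-j-expansion}, and for $j=k$ from $\sigma_k=\Delta_VW+O(t^{-k})$ via \eqref{eq:sigma-k-expansion}. You also make explicit that $\{D^2W(x):x\in\ol\Omega\}$ is bounded, so the remainders are uniform in $x$; the paper uses this without stating it.
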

	
	\begin{proof}
		For $1\le j\le k-1$, \eqref{eq:sigma-j-expansion} gives $\sigma_j(D^2(t\phi_E+t^{1-k}W))>0$ for large $t$. For $j=k$, \eqref{eq:sigma-k-expansion} gives
		\begin{equation}\label{eq:admissible-sigmak}
			\sigma_k(D^2(t\phi_E+t^{1-k}W))=\Delta_VW+O(t^{-k})>0
		\end{equation}
		for large $t$. Hence, the Hessian lies in $\Gk$.
	\end{proof}
	
	\begin{proposition}\label{prop:global-C0}
		Fix $E\in\Gr(k-1,n)$ and let $f\in C^\infty(\ol\Omega)$ satisfy $f\ge c_0>0$. For each $t>0$, let $u_{t,E}^f\in C^\infty(\ol\Omega)$ be the unique smooth $k$-admissible solution of
		\begin{equation}\label{eq:u-t-E}
			\begin{cases}
				\sigma_k(D^2u_{t,E}^f)=f & \text{in } \Omega,\\
				u_{t,E}^f=t\phi_E & \text{on } \p\Omega.
			\end{cases}
		\end{equation}
		Let $U_{t,E}^f$ be defined by \eqref{eq:U-def}. Then $U_{t,E}^f\in C^\infty(\ol\Omega)$ and $U_{t,E}^f=t\phi_E$ on $\p\Omega$. In particular, $u_{t,E}^f-U_{t,E}^f\in C^\infty(\ol\Omega)$. There are constants $C>0$ and $t_0>0$, depending on $E$, $f$, $\Omega$, and $k$, but not on $t$, such that
		\begin{equation}\label{eq:C0-asymp}
			\big\|u_{t,E}^f-U_{t,E}^f\big\|_{L^\infty(\Omega)}
			\le Ct^{1-2k} \quad \text{for } t\ge t_0.
		\end{equation}
	\end{proposition}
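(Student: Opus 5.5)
The plan is a barrier argument based on the comparison principle, Lemma~\ref{lem:comparison}, using perturbations of $U_{t,E}^f$ by multiples of the section barrier $b_E$ from Proposition~\ref{prop:smooth-corrections}. The first two claims are immediate. By Proposition~\ref{prop:smooth-corrections}, $w_E^f\in C^\infty(\ol\Omega)$ and $w_E^f=0$ on $\p\Omega$, so $U_{t,E}^f\in C^\infty(\ol\Omega)$ with boundary value $t\phi_E$. Proposition~\ref{prop:CNS-forward} gives $u_{t,E}^f\in C^\infty(\ol\Omega)$, and hence the difference is smooth.

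For the $L^\infty$ bound, for a constant $A>0$ to be fixed later, I would define the barriers
\[
U^\pm:=t\phi_E+t^{1-k}\big(w_E^f\pm At^{-k}b_E\big).
\]
Since $b_E=0$ on $\p\Omega$, both barriers equal $t\phi_E$ there. Put $H^\pm=D^2w_E^f\pm At^{-k}D^2b_E$. For $t\ge1$ this ranges over a bounded subset of $\Sym(n)$, because $b_E,w_E^f\in C^\infty(\ol\Omega)$. Lemma~\ref{lem:sigma-expansion} then gives
\[
\sigma_k(D^2U^\pm)=\tr(P_VH^\pm)+t^{-k}Q_E(H^\pm)+O(t^{-2k})=f\pm At^{-k}+t^{-k}Q_E(D^2w_E^f)+O(At^{-2k})+O(t^{-2k}).
\]
Here I use $\Delta_Vw_E^f=f$ and $\Delta_Vb_E=1$. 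I also use that $Q_E(H^\pm)-Q_E(D^2w_E^f)=O(At^{-k})$ once $A$ is fixed. Let $K=\sup_{\ol\Omega}|Q_E(D^2w_E^f)|+1$ and choose $A=2K$. For $t\ge t_0$ large, depending on $A$, this yields
\[
\sigma_k(D^2U^+)\ge f+\tfrac{K}{2}t^{-k}>f,\qquad \sigma_k(D^2U^-)\le f-\tfrac K2 t^{-k}<f.
\]
Admissibility on $\ol\Omega$ follows from Lemma~\ref{lem:admissibility} applied to $W=w_E^f\pm At^{-k}b_E$. Its hypothesis holds because $\Delta_VW=f\pm At^{-k}\ge c_0/2$ for large $t$. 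One point needs care: $W$ depends on $t$, so I would re-run the proof of Lemma~\ref{lem:admissibility} directly. It only needs $D^2W$ bounded and $\Delta_VW\ge c_0/2$ uniformly, since the remainders in Lemma~\ref{lem:sigma-expansion} are uniform over bounded $H$.

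Lemma~\ref{lem:comparison} applies with $D=\Omega$, since $u_{t,E}^f$ is admissible up to the boundary by Proposition~\ref{prop:CNS-forward}. Comparing $U^+$ with $u_{t,E}^f$ gives $U^+\le u_{t,E}^f$, and comparing $u_{t,E}^f$ with $U^-$ gives $u_{t,E}^f\le U^-$. Recall that $b_E\le0$ and $b_E\ge-\diam(\Omega)^2/(2q)$ by Lemma~\ref{lem:b-sign}. Consequently
\[
|u_{t,E}^f-U_{t,E}^f|\le At^{1-2k}\,\|b_E\|_{L^\infty}\le \frac{A\diam(\Omega)^2}{2q}\,t^{1-2k},
\]
which is \eqref{eq:C0-asymp}.

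The main obstacle is entirely in the previous step: making sure the sign convention of the comparison is correct and that the $O(t^{-2k})$ remainder in Lemma~\ref{lem:sigma-expansion} is genuinely uniform for the $t$-dependent family $H^\pm$. The latter holds because $\sigma_k$ is a polynomial and $H^\pm$ stays in a fixed compact set for $t\ge1$. The remaining steps are routine.
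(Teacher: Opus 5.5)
Your proposal is correct and follows essentially the same argument as the paper: you use the two-sided barriers $t\phi_E+t^{1-k}(w_E^f\pm At^{-k}b_E)$, the expansion of Lemma~\ref{lem:sigma-expansion} with $A$ chosen large enough to absorb $Q_E(D^2w_E^f)$, admissibility via Lemma~\ref{lem:admissibility}, two applications of the comparison principle, and the bound $|b_E|\le\diam(\Omega)^2/(2q)$ from Lemma~\ref{lem:b-sign}. You also note that Lemma~\ref{lem:admissibility} is applied to a $t$-dependent $W$ whose Hessian is uniformly bounded; the paper uses this without comment, and your justification of it is sound.
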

	
	\begin{proof}
		The regularity of $u_{t,E}^f$ follows from Proposition \ref{prop:CNS-forward}. The function $w_E^f$ is smooth up to $\p\Omega$ by Proposition \ref{prop:smooth-corrections}, and $w_E^f=0$ on $\p\Omega$. Therefore, $U_{t,E}^f\in C^\infty(\ol\Omega)$ and $U_{t,E}^f=t\phi_E$ on $\p\Omega$.
		
		For $\sigma\in\{-1,1\}$, define
		\begin{equation}\label{eq:global-barriers}
			V_t^\sigma=t\phi_E+t^{1-k}\big(w_E^f+\sigma C_*t^{-k}b_E\big),
		\end{equation}
		where $C_*>0$ will be chosen later. Since $b_E\in C^\infty(\ol\Omega)$ satisfies \eqref{eq:b_E}, the barriers $V_t^\sigma$ are smooth up to the boundary and satisfy $V_t^\sigma=t\phi_E$ on $\p\Omega$.
		
		Applying Lemma \ref{lem:sigma-expansion} with $H=D^2w_E^f+\sigma C_*t^{-k}D^2b_E$ gives 
		\begin{equation}\label{eq:barrier-source-expansion}
			\sigma_k(D^2V_t^\sigma)=f+\sigma C_*t^{-k}+t^{-k}Q_E\big(D^2w_E^f+\sigma C_*t^{-k}D^2b_E\big)
			+O_{C_*}(t^{-2k})
		\end{equation}
		uniformly in $\Omega$. Since $Q_E$ is quadratic and $D^2w_E^f,D^2b_E$ are bounded, the terms in $Q_E$ involving $D^2b_E$ contribute only $O_{C_*}(t^{-2k})$ after multiplication by $t^{-k}$. Hence,
		\begin{equation}\label{eq:barrier-source-expansion-simplified}
			\sigma_k(D^2V_t^\sigma)
			=
			f+t^{-k}\big(\sigma C_*+Q_E(D^2w_E^f)\big)+O_{C_*}(t^{-2k}).
		\end{equation} 
		Now, we choose $C_*$ so large that $C_*>2\big\|Q_E(D^2w_E^f)\big\|_{L^\infty(\Omega)}+2$, then we have 
		\begin{equation}\label{eq:global-barrier-ineq}
			\sigma_k(D^2V_t^+)\ge f,\quad \sigma_k(D^2V_t^-)\le f\quad \text{in } \Omega,
		\end{equation}
		for all sufficiently large $t$.
		
		Before applying Lemma~\ref{lem:comparison}, we check that the barriers are admissible. Since $f\ge c_0>0$ and $\Delta_Vb_E=1$,
		\begin{equation}\label{eq:barrier-delta-positive}
			\Delta_V\big(w_E^f+\sigma C_*t^{-k}b_E\big)=f+\sigma C_*t^{-k}\ge \frac{c_0}{2}
		\end{equation}
		for $t$ sufficiently large and for both signs $\sigma=\pm1$. Lemma \ref{lem:admissibility} then gives $D^2V_t^\sigma\in\Gk$ for large $t$. Since $b_E\le0$ in $\Omega$ by Lemma \ref{lem:b-sign}, the definitions give
		\begin{equation}\label{eq:barrier-order}
			V_t^+\le U_{t,E}^f\le V_t^- \quad \text{in } \Omega.
		\end{equation}
		The functions $V_t^+$, $u_{t,E}^f$, and $V_t^-$ have the same boundary value $t\phi_E$ on $\p\Omega$. The exact solution $u_{t,E}^f$ is $k$-admissible by Proposition~\ref{prop:CNS-forward}, and the barriers $V_t^\pm$ are $k$-admissible by Lemma \ref{lem:admissibility}. We now apply Lemma \ref{lem:comparison} twice. First, with $u=V_t^+$ and $v=u_{t,E}^f$, the inequality $\sigma_k(D^2V_t^+)\ge f=\sigma_k(D^2u_{t,E}^f)$ gives $V_t^+\le u_{t,E}^f$ in $\Omega$. Second, with $u=u_{t,E}^f$ and $v=V_t^-$, the inequality $f=\sigma_k(D^2u_{t,E}^f)\ge \sigma_k(D^2V_t^-)$ gives $u_{t,E}^f\le V_t^-$ in $\Omega$. Therefore,
		\begin{equation}\label{eq:u-between}
			V_t^+\le u_{t,E}^f\le V_t^- \quad \text{in } \Omega.
		\end{equation}
		
		We now compare $u_{t,E}^f$ with $U_{t,E}^f$ pointwise. From the definition \eqref{eq:global-barriers} of $V_t^\sigma$, since $b_E\le0$ in $\Omega$, this gives $V_t^+\le U_{t,E}^f\le V_t^-$. Together with \eqref{eq:u-between}, we have $u_{t,E}^f-U_{t,E}^f \le V_t^- - U_{t,E}^f=-C_*t^{1-2k}b_E$ and $U_{t,E}^f-u_{t,E}^f\le U_{t,E}^f - V_t^+=-C_*t^{1-2k}b_E$.  Since $-b_E=|b_E|$, we obtain
		\begin{equation}\label{eq:u-U-bound-by-b}
			\big|u_{t,E}^f-U_{t,E}^f\big|
			\le C_*t^{1-2k}|b_E|
			\quad
			\text{in } \Omega.
		\end{equation}
		The bound on $b_E$ in Lemma \ref{lem:b-sign} gives
		\begin{equation}\label{eq:C0-final-bound}
			\big\|u_{t,E}^f-U_{t,E}^f\big\|_{L^\infty(\Omega)}
			\le
			C_*t^{1-2k}\frac{\diam(\Omega)^2}{2q}.
		\end{equation}
		This proves \eqref{eq:C0-asymp}.
	\end{proof}
	
	The estimate in Proposition \ref{prop:global-C0} is only an $L^\infty$ estimate. It is not enough to differentiate this estimate up to the boundary, since the leading Hessian profile $tP_E$ lies close to a rank $k-1$ degeneracy of the $k$-Hessian operator. Therefore, the usual route via uniform elliptic boundary estimates is unavailable at the leading scale. The boundary normal derivative has to be extracted by comparison. The next lemma constructs local barriers on one-sided boundary neighborhoods of compact subsets of the non-glancing set. These barriers have two roles: their $\Delta_V$ sign creates a source gap larger than the algebraic error in the $\sigma_k$ expansion, and their positive size on $\p D_{x_0}\setminus\p\Omega$ dominates the global $L^\infty$ remainder.
	
	We now prove the boundary normal derivative asymptotic on compact subsets of the non-glancing boundary.
	
	\begin{lemma}\label{lem:uniform-G}
		Let $K\Subset\Gamma_E$, and fix $0<\alpha<1/(4q)$. Then there exist a constant $c>0$ and one-sided boundary neighborhoods $D_{x_0}\subset\Omega$, $x_0\in K$, with smooth boundary and with $x_0\in\p D_{x_0}\cap\p\Omega$, such that the following holds. If
		\begin{equation}\label{eq:Gx0}
			G_{x_0}(x)=-b_E(x)+\alpha |x-x_0|^2,
		\end{equation}
		then $G_{x_0}\ge0$ on $\overline{D_{x_0}}$, $G_{x_0}(x_0)=0$,
		\begin{equation}\label{eq:Delta-G-neg}
			\Delta_VG_{x_0}\le -c \quad \text{in } D_{x_0},
		\end{equation}
		and
		\begin{equation}\label{eq:G-artificial-positive}
			G_{x_0}\ge c \quad \text{on } \p D_{x_0}\setminus\p\Omega.
		\end{equation}
		The constant $c$ and the size of the neighborhoods may be chosen uniformly for $x_0\in K$.
	\end{lemma}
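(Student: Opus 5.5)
Compute $\Delta_V G_{x_0}=-\Delta_Vb_E+\alpha\Delta_V|x-x_0|^2=-1+2q\alpha$. Since $\alpha<1/(4q)$, this gives $\Delta_VG_{x_0}\le -1/2$ in all of $\Omega$, uniformly in $x_0$. So \eqref{eq:Delta-G-neg} holds with any $c\le 1/2$, whatever $D_{x_0}\subset\Omega$ is. Nonnegativity is also immediate: $b_E\le0$ in $\Omega$ by Lemma~\ref{lem:b-sign}, and $b_E=0$ on $\p\Omega$, so $G_{x_0}\ge\alpha|x-x_0|^2\ge0$ on $\ol\Omega$ with $G_{x_0}(x_0)=0$. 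The real content is therefore the choice of $D_{x_0}$ and the uniform lower bound \eqref{eq:G-artificial-positive} on the artificial boundary.

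For $D_{x_0}$ I would take a one-sided neighborhood $D_{x_0}=\Omega\cap B_{r}(x_0)$, possibly with its corners smoothed so that the boundary is smooth, with $r$ independent of $x_0$. The artificial boundary is then $\p D_{x_0}\setminus\p\Omega\subset\ol\Omega\cap\p B_r(x_0)$, and there
\[
G_{x_0}\ge \alpha r^2+(-b_E)\ge \alpha r^2 .
\]
So $c=\min\{1/2,\alpha r^2\}$ works. If the corners are smoothed by slightly modifying the sphere near $\p\Omega$, one keeps the artificial boundary inside $\{|x-x_0|\ge r/2\}$ and gets $c=\min\{1/2,\alpha r^2/4\}$.

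The uniformity in $x_0\in K$ is essentially free: the radius $r$ can be fixed once, for example smaller than a uniform tubular or graph radius of $\p\Omega$, so that $\Omega\cap B_r(x_0)$ is a graph-type domain. The roles of compactness of $K\Subset\Gamma_E$ and of Lemma~\ref{lem:b-sign}'s strict Hopf sign $\p_\nu b_E>0$ on $K$ are not needed for the inequalities stated here. Those are used later, when comparing the normal derivatives of the local barrier with those of $u_{t,E}^f$. Still, I would choose $r$ small enough, using compactness of $K$ and continuity of $|P_V\nu|$, so that $\ol{D_{x_0}}\cap\p\Omega\subset\Gamma_E$, with $|P_V\nu|\ge\lambda>0$ on it and $b_E$ smooth there by Proposition~\ref{prop:smooth-corrections}.

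The main obstacle is only technical: producing $D_{x_0}$ with genuinely smooth boundary, uniformly in $x_0$. I would do this with a fixed smooth convex model. In boundary-normal coordinates at $x_0$, which depend smoothly on $x_0$ by compactness and smoothness of $\p\Omega$, take the image of a fixed smooth domain lying between $B_{r/2}$ and $B_r$ that touches the flat boundary along a disc. Since $\p\Omega$ is a smooth compact hypersurface, the distortion of these coordinates is uniformly controlled on $K$, which keeps the artificial boundary at distance at least $r/2$ from $x_0$.
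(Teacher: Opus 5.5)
Your proof is correct, but it handles the key inequality \eqref{eq:G-artificial-positive} by a lighter route than the paper.

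The paper takes $D_{x_0}$ to be a rounded collar $\{p-s\nu(p):\ p\in B_{\p\Omega}(x_0,r),\ 0<s<s_1\}$, with $s_1$ possibly small relative to $r$, and treats its artificial faces in two ways:
\begin{itemize}
\item on the lateral faces it uses $\alpha|x-x_0|^2\ge\alpha r^2/16$, exactly as you do;
\item on the inner face $s\ge s_1/2$ it uses the Hopf sign $\p_\nu b_E\ge c_1>0$ on a neighborhood $K_1\Subset\Gamma_E$ of $K$, from Lemma \ref{lem:b-sign}, to get the linear bound $-b_E\ge c_1 s/2$.
\end{itemize}
You use only $-b_E\ge0$ and choose $D_{x_0}$ so that its whole artificial boundary stays a fixed Euclidean distance from $x_0$. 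The quadratic term then does all the work.

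This is legitimate, since the lemma only asks for some uniform $c>0$. In fact the Hopf input is dispensable even in the paper's collar. The inner face lies at distance $s\ge s_1/2$ from $\p\Omega\ni x_0$, so $\alpha|x-x_0|^2\ge\alpha s_1^2/4$ already suffices there.

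What the paper's route buys is a lower bound linear rather than quadratic in the depth, which is never needed downstream. What your route buys is that neither the non-glancing hypothesis nor the Hopf lemma enters. Given $b_E\in C^\infty(\ol\Omega)$ from Proposition \ref{prop:smooth-corrections} and $b_E\le0$, the construction goes through uniformly at every $x_0\in\p\Omega$, with uniformity coming only from the smooth compact geometry of $\p\Omega$.

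One small correction to your aside: the strict Hopf sign is not used later either. In Proposition \ref{prop:normal-asymp} only the boundedness of $\p_\nu b_E$ on $K$ enters. The sign $\p_\nu G_{x_0}(x_0)\le0$ is automatic, because $G_{x_0}\ge0$ in $D_{x_0}$ and $G_{x_0}(x_0)=0$.
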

	
	\begin{proof}
		The number $\alpha$ has already been fixed. Since $\Delta_Vb_E=1$ and $\Delta_V|x-x_0|^2=2q$, one has $\Delta_VG_{x_0}=-1+2q\alpha\le -\frac12$. The inequalities $G_{x_0}\ge0$ and $G_{x_0}(x_0)=0$ follow from $b_E\le0$ in $\Omega$ and $b_E=0$ on $\p\Omega$.
		
		It remains to choose the neighborhoods so that $G_{x_0}$ is uniformly positive on $\p D_{x_0}\setminus\p\Omega$. Since $K\Subset\Gamma_E$, Lemma \ref{lem:b-sign} and compactness give a number $c_1>0$ and a relatively open boundary neighborhood $K_1$ with $K\Subset K_1\Subset\Gamma_E$ such that
		\begin{equation}\label{eq:b-normal-uniform}
			\p_\nu b_E\ge c_1 \quad \text{on } K_1.
		\end{equation}
		Using boundary normal coordinates in a fixed one-sided collar of $K_1$, write points as $x=p-s\nu(p)$, where $p\in\p\Omega$ and $0\le s<s_0$. Since $b_E=0$ on $\p\Omega$, Taylor's formula and \eqref{eq:b-normal-uniform} give, after decreasing $s_0$ if necessary,
		\begin{equation}\label{eq:b-linear-collar}
			-b_E(p-s\nu(p))\ge \frac{c_1}{2}s
		\end{equation}
		for $p\in K_1$ and $0\le s<s_0$.
		
		Choose $r>0$ and $s_1>0$ so small that, for every $x_0\in K$, the boundary ball $B_{\p\Omega}(x_0,2r)$ is contained in $K_1$ and the corresponding one-sided collar of height $2s_1$ is contained in the collar above. We choose $D_{x_0}$ to be a smooth one-sided boundary neighborhood satisfying
		\[
		\left\{p-s\nu(p):\, p\in B_{\p\Omega}(x_0,r/2),\ 0<s<s_1/2\right\}\subset D_{x_0}\subset \{p-s\nu(p):\, p\in B_{\p\Omega}(x_0,r),\ 0<s<s_1\},
		\]
		and such that $\p D_{x_0}\cap\p\Omega$ contains $B_{\p\Omega}(x_0,r/2)$. Such domains are obtained by rounding the corners of the rectangular collar in boundary normal coordinates. The construction is uniform for $x_0\in K$.
		
		On $\p D_{x_0}\setminus\p\Omega$, at least one of the following alternatives holds: either $s\ge s_1/2$, or the boundary footpoint $p$ satisfies $\dist_{\p\Omega}(p,x_0)\ge r/2$. In the first case, \eqref{eq:b-linear-collar} gives $G_{x_0}\ge -b_E\ge \frac{c_1s_1}{4}$. In the second case, after decreasing $s_1$ relative to $r$ if necessary, one has $|p-s\nu(p)-x_0|\ge r/4$, and hence, $G_{x_0}\ge \alpha r^2/16$. Taking $c=\min\big\{\frac12,\frac{c_1s_1}{4},\frac{\alpha r^2}{16}\big\}$ gives \eqref{eq:Delta-G-neg} and \eqref{eq:G-artificial-positive}, uniformly for $x_0\in K$.
	\end{proof}
	
	\begin{proposition}\label{prop:normal-asymp}
		Let $K\Subset\Gamma_E$. Then, for fixed $E$,
		\begin{equation}\label{eq:normal-asymp}
			\p_\nu u_{t,E}^f=t\p_\nu\phi_E+t^{1-k}\p_\nu w_E^f+O(t^{1-k-k/2})
		\end{equation}
		uniformly on $K$.
	\end{proposition}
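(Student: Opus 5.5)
The plan is to compare $u_{t,E}^f$ with $U_{t,E}^f$ on the one-sided neighborhoods $D_{x_0}$ of Lemma~\ref{lem:uniform-G}, using the localized barriers
\[
W_t^\pm:=U_{t,E}^f\pm t^{1-k-k/2}G_{x_0}
= t\phi_E+t^{1-k}\big(w_E^f\pm t^{-k/2}G_{x_0}\big),
\]
for $x_0\in K$. The intermediate exponent $t^{-k/2}$ in the amplitude is chosen for two reasons. It is large compared with the $O(t^{-k})$ algebraic error in Lemma~\ref{lem:sigma-expansion}. Moreover $t^{1-k-k/2}$ is large compared with the $O(t^{1-2k})$ global remainder of Proposition~\ref{prop:global-C0}. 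Once the comparison $W_t^-\le u_{t,E}^f\le W_t^+$ in $D_{x_0}$ is established, all three functions vanish to first order together at $x_0$ after subtracting $U_{t,E}^f$. Differentiating at $x_0$ will then give \eqref{eq:normal-asymp}.

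\textbf{Step 1: the equation for the barriers.} Apply Lemma~\ref{lem:sigma-expansion} with $H=D^2w_E^f\pm t^{-k/2}D^2G_{x_0}$. This $H$ is bounded uniformly in $t$ and in $x_0\in K$, because $D^2G_{x_0}=-D^2b_E+2\alpha I$ and $b_E\in C^\infty(\ol\Omega)$ by Proposition~\ref{prop:smooth-corrections}. The expansion gives
\[
\sigma_k(D^2W_t^\pm)=f\pm t^{-k/2}\Delta_VG_{x_0}+O(t^{-k}).
\]
By \eqref{eq:Delta-G-neg}, $\pm t^{-k/2}\Delta_VG_{x_0}$ is $\le -ct^{-k/2}$ for the plus sign and $\ge ct^{-k/2}$ for the minus sign. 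Hence for $t\ge t_1$, with $t_1$ uniform in $x_0\in K$, we obtain $\sigma_k(D^2W_t^+)\le f$ and $\sigma_k(D^2W_t^-)\ge f$ in $D_{x_0}$. Admissibility of $W_t^\pm$ on $\ol{D_{x_0}}$ follows from Lemma~\ref{lem:admissibility}, since $\Delta_V(w_E^f\pm t^{-k/2}G_{x_0})\ge c_0/2$ for large $t$.

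\textbf{Step 2: boundary ordering.} On $\p D_{x_0}\cap\p\Omega$ we have $u_{t,E}^f=t\phi_E=U_{t,E}^f$ and $G_{x_0}\ge0$, so $W_t^-\le u_{t,E}^f\le W_t^+$ there. On $\p D_{x_0}\setminus\p\Omega$, Proposition~\ref{prop:global-C0} and \eqref{eq:G-artificial-positive} give
\[
|u_{t,E}^f-U_{t,E}^f|\le Ct^{1-2k}\le c\,t^{1-k-k/2}\le t^{1-k-k/2}G_{x_0}
\]
once $t^{k/2}\ge C/c$, so the same ordering holds there. Lemma~\ref{lem:comparison} on $D=D_{x_0}$ is applied twice. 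With $u=u_{t,E}^f$, $v=W_t^+$ it gives $u_{t,E}^f\le W_t^+$. With $u=W_t^-$, $v=u_{t,E}^f$ it gives $W_t^-\le u_{t,E}^f$. Consequently
\[
|u_{t,E}^f-U_{t,E}^f|\le t^{1-k-k/2}G_{x_0}\quad\text{in }D_{x_0}.
\]

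\textbf{Step 3: passing to the normal derivative.} The function $u_{t,E}^f-U_{t,E}^f$ is smooth up to $\p\Omega$ and vanishes on $\p\Omega$, and $G_{x_0}(x_0)=0$. Evaluate the last inequality at $x_0-s\nu(x_0)$, divide by $s$, and let $s\downarrow0$. This yields
\[
|\p_\nu(u_{t,E}^f-U_{t,E}^f)(x_0)|\le t^{1-k-k/2}\,|\p_\nu G_{x_0}(x_0)| = t^{1-k-k/2}\,\p_\nu b_E(x_0).
\]
The quantity $\p_\nu b_E(x_0)$ is bounded by $\|b_E\|_{C^1(\ol\Omega)}$, independently of $x_0$. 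Since $\p_\nu U_{t,E}^f=t\p_\nu\phi_E+t^{1-k}\p_\nu w_E^f$, this is \eqref{eq:normal-asymp}, uniformly on $K$.

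The main obstacle is balancing the exponents so that the source gap beats the algebraic error while the barrier still dominates the global remainder on the artificial boundary. The choice $t^{-k/2}$ sits strictly between $t^{-k}$ and $t^{0}$ relative to $t^{1-k}$, which accomplishes both. A second point to check is that every constant ($t_1$, the bound on $H$, $c$ from Lemma~\ref{lem:uniform-G}) is uniform in $x_0\in K$. This uniformity is exactly what Lemma~\ref{lem:uniform-G} provides.
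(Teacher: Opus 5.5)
Your proposal is correct and follows the paper's proof essentially step by step: the same local barriers $U_{t,E}^f\pm t^{1-k-k/2}G_{x_0}$ on the neighborhoods $D_{x_0}$ of Lemma~\ref{lem:uniform-G}, the same scale separation $t^{-k}\ll t^{-k/2}\ll 1$ that lets the source gap beat the algebraic error and the barrier dominate the global $O(t^{1-2k})$ remainder, and the same two applications of Lemma~\ref{lem:comparison}. The only difference is cosmetic. You extract the normal derivative from a difference quotient of the two-sided bound $|u_{t,E}^f-U_{t,E}^f|\le t^{1-k-k/2}G_{x_0}$, whereas the paper orders the outward normal derivatives of the three functions at $x_0$ and then uses $\p_\nu G_{x_0}(x_0)=-\p_\nu b_E(x_0)$.
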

	
	\begin{proof}
		Put $\eps_t=t^{-k/2}$ and $\eta_t=t^{1-k}\eps_t$. The choice $\eps_t=t^{-k/2}$ is only used to separate scales. On one hand, $\eps_t$ is much larger than the algebraic error $t^{-k}$ in the expansion of $\sigma_k$, so the sign of the local source gap is controlled by $\pm\eps_t\Delta_VG_{x_0}$. On the other hand, the resulting boundary displacement $\eta_t=t^{1-k}\eps_t$ is still much larger than the global remainder $t^{1-2k}$ on $\p D_{x_0}\setminus\p\Omega$. Since we have $\frac{t^{1-k-k/2}}{t^{1-2k}}=t^{k/2}\to\infty$, which implies that the local barriers can be compared with the exact solution on the whole boundary of the one-sided neighborhood.
		
		Fix $x_0\in K$ and let $D_{x_0}$ and $G_{x_0}$ be as in Lemma \ref{lem:uniform-G}. Define
		\begin{equation}\label{eq:local-barriers}
			L_{t,x_0}=U_{t,E}^f-\eta_tG_{x_0}, \quad
			W_{t,x_0}=U_{t,E}^f+\eta_tG_{x_0}.
		\end{equation}
		Using \eqref{eq:U-def}, the above identities are equivalent to 
		\begin{equation}\label{eq:local-corrections}
			L_{t,x_0}=t\phi_E+t^{1-k}(w_E^f-\eps_tG_{x_0}), \quad
			W_{t,x_0}=t\phi_E+t^{1-k}(w_E^f+\eps_tG_{x_0}).
		\end{equation}
		By \eqref{eq:Delta-G-neg},
		\begin{equation}\label{eq:local-Delta}
			\Delta_V(w_E^f-\eps_tG_{x_0})\ge f+c\eps_t, \quad
			\Delta_V(w_E^f+\eps_tG_{x_0})\le f-c\eps_t.
		\end{equation}
		The $C^2$ norms of $G_{x_0}$ are bounded uniformly for $x_0\in K$. Applying Lemma~\ref{lem:sigma-expansion} to the two functions in \eqref{eq:local-corrections}, and using that $\eps_t=t^{-k/2}$ dominates the error $t^{-k}$, gives, for all sufficiently large $t$,
		\begin{equation}\label{eq:local-barrier-source}
			\sigma_k(D^2L_{t,x_0})\ge f, \quad \sigma_k(D^2W_{t,x_0})\le f
		\end{equation}
		in $D_{x_0}$. To check admissibility, choose $C_K>0$ such that $|\Delta_VG_{x_0}|\le C_K$ for all $x_0\in K$. Since $f\ge c_0>0$ on $\ol\Omega$, one has
		\begin{equation}\label{eq:local-admissible-delta}
			\Delta_V(w_E^f\pm\eps_tG_{x_0})=f\pm\eps_t\Delta_VG_{x_0}\ge c_0-C_K\eps_t
			\ge \frac{c_0}{2}
		\end{equation}
		for all sufficiently large $t$, uniformly for $x_0\in K$. Lemma~\ref{lem:admissibility} gives that both $L_{t,x_0}$ and $W_{t,x_0}$ are $k$-admissible for large $t$, uniformly for $x_0\in K$.
		
		We compare the barriers with the exact solution on $\p D_{x_0}$. On $\p D_{x_0}\cap\p\Omega$, one has $U_{t,E}^f=t\phi_E=u_{t,E}^f$ and $G_{x_0}\ge0$, so $L_{t,x_0}\le u_{t,E}^f\le W_{t,x_0}$. On $\p D_{x_0}\setminus\p\Omega$, Lemma \ref{lem:uniform-G} gives $G_{x_0}\ge c$. Proposition \ref{prop:global-C0} gives $\big|u_{t,E}^f-U_{t,E}^f\big|\le C_0t^{1-2k}$ in $\Omega$. Since $\eta_tG_{x_0}\ge ct^{1-k-k/2}$ and $1-k-k/2>1-2k$, the ratio $t^{1-k-k/2}/t^{1-2k}=t^{k/2}$ tends to infinity. Thus, $\eta_tG_{x_0}$ is larger than $C_0t^{1-2k}$ on $\p D_{x_0}\setminus\p\Omega$ for all sufficiently large $t$. Hence, the same boundary ordering holds on all of $\p D_{x_0}$.
		
		The domains $D_{x_0}$ were chosen with smooth boundary in Lemma~\ref{lem:uniform-G}. The exact solution is $k$-admissible, and the two local barriers are $k$-admissible on $\ol{D_{x_0}}$ for large $t$. The source inequalities \eqref{eq:local-barrier-source} hold in $D_{x_0}$, and the boundary ordering has just been verified on all of $\p D_{x_0}$. We apply Lemma~\ref{lem:comparison} twice in $D_{x_0}$. First, with $u=L_{t,x_0}$ and $v=u_{t,E}^f$, the inequality $\sigma_k(D^2L_{t,x_0})\ge f=\sigma_k(D^2u_{t,E}^f)$ gives $L_{t,x_0}\le u_{t,E}^f$. Second, with $u=u_{t,E}^f$ and $v=W_{t,x_0}$, the inequality $f=\sigma_k(D^2u_{t,E}^f)\ge\sigma_k(D^2W_{t,x_0})$ gives $u_{t,E}^f\le W_{t,x_0}$. Hence,
		\begin{equation}\label{eq:local-between}
			L_{t,x_0}\le u_{t,E}^f\le W_{t,x_0} \quad \text{in } D_{x_0}.
		\end{equation}
		At $x_0$, all three functions have the same boundary value $t\phi_E(x_0)$. Since $L_{t,x_0}\le u_{t,E}^f\le W_{t,x_0}$ in the one-sided neighborhood and equality holds at $x_0$, the outward normal derivatives satisfy
		\begin{equation}\label{eq:normal-order}
			\p_\nu W_{t,x_0}(x_0)\le \p_\nu u_{t,E}^f(x_0)\le \p_\nu L_{t,x_0}(x_0).
		\end{equation}
		Here, the normal is the outward normal to $\Omega$. Since $D_{x_0}$ is a one-sided neighborhood inside $\Omega$ and $x_0\in\p D_{x_0}\cap\p\Omega$, this is also the outward normal to $D_{x_0}$ at $x_0$.
		
		Using \eqref{eq:local-barriers}, we have
		\begin{equation}\label{eq:barrier-normal-derivatives}
			\p_\nu L_{t,x_0}(x_0)=\p_\nu U_{t,E}^f(x_0)-\eta_t\p_\nu G_{x_0}(x_0), \quad
			\p_\nu W_{t,x_0}(x_0)=\p_\nu U_{t,E}^f(x_0)+\eta_t\p_\nu G_{x_0}(x_0).
		\end{equation}
		Moreover, $G_{x_0}=-b_E+\alpha|x-x_0|^2$, and the normal derivative of $|x-x_0|^2$ at $x_0$ is zero. Therefore,
		\begin{equation}\label{eq:normal-G}
			\p_\nu G_{x_0}(x_0)=-\p_\nu b_E(x_0).
		\end{equation}
		The function $b_E$ is smooth, so $\p_\nu b_E$ is uniformly bounded on $K$. From \eqref{eq:normal-order}--\eqref{eq:normal-G},
		\begin{equation}\label{eq:normal-U-error}
			\big|\p_\nu u_{t,E}^f(x_0)-\p_\nu U_{t,E}^f(x_0)\big|
			\le C_K\eta_t
			=
			C_Kt^{1-k-k/2}.
		\end{equation}
		Since $U_{t,E}^f=t\phi_E+t^{1-k}w_E^f$, this is exactly \eqref{eq:normal-asymp}. The constants are uniform for $x_0\in K$.
	\end{proof}
	
	\begin{corollary}\label{cor:DN-limit}
		For every fixed $E$, 
		\[
		\lim_{t\to\infty}t^{k-1}\big(\Lambda_f(t\phi_E|_{\p\Omega})-t\p_\nu\phi_E\big)
		=\p_\nu w_E^f
		\]
		locally uniformly on $\Gamma_E$, where $w_E^f$ is the smooth fiberwise Poisson correction characterized by \eqref{eq:w_E^f}.
	\end{corollary}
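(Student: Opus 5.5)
The corollary is a direct rewriting of Proposition~\ref{prop:normal-asymp}. By the definition \eqref{eq:DN-intro} of the nonlinear DN map, and since $u_{t,E}^f$ is the unique smooth $k$-admissible solution with boundary value $t\phi_E|_{\p\Omega}$ (Proposition~\ref{prop:CNS-forward}), we have $\Lambda_f(t\phi_E|_{\p\Omega})=\p_\nu u_{t,E}^f|_{\p\Omega}$ for every $t>0$. Here $\p_\nu\phi_E$ denotes the normal derivative of the smooth ambient function $\phi_E(x)=|P_Ex|^2/2$, which is $t$-independent; the term $t\p_\nu\phi_E$ is therefore exactly the normal derivative of the leading profile.

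Fix a compact set $K\Subset\Gamma_E$. Proposition~\ref{prop:normal-asymp} gives constants $C_K$ and $t_K$ such that
\[
\big|\p_\nu u_{t,E}^f(x)-t\p_\nu\phi_E(x)-t^{1-k}\p_\nu w_E^f(x)\big|\le C_Kt^{1-k-k/2},
\quad x\in K,\ t\ge t_K.
\]
Multiplying by $t^{k-1}$ yields
\[
\sup_{x\in K}\Big|t^{k-1}\big(\Lambda_f(t\phi_E|_{\p\Omega})(x)-t\p_\nu\phi_E(x)\big)-\p_\nu w_E^f(x)\Big|\le C_Kt^{-k/2}\to0
\]
as $t\to\infty$. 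Since $K$ is an arbitrary compact subset of the relatively open set $\Gamma_E$, this is exactly local uniform convergence on $\Gamma_E$.

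Finally, $w_E^f\in C^\infty(\ol\Omega)$ by Proposition~\ref{prop:smooth-corrections}, so $\p_\nu w_E^f$ is a well-defined continuous function on $\p\Omega$. Moreover, $w_E^f$ is uniquely characterized by \eqref{eq:w_E^f}: on each nondegenerate section it is the unique Dirichlet solution, and the degenerate sections have measure zero by Lemma~\ref{lem:section-geometry}. Hence the limit is the stated object.

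There is no genuine obstacle here. All the analytic work, namely the global barriers, the local barriers near non-glancing points, and the smoothness through glancing points, has already been done in Proposition~\ref{prop:normal-asymp} and the results it relies on. The only point needing care is that the constants in Proposition~\ref{prop:normal-asymp} are uniform on each compact $K$ but may degenerate as $K$ approaches the glancing set $\mathcal G_E$. This is why the conclusion is stated only as local uniform convergence on $\Gamma_E$.
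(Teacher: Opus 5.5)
Your proposal is correct and is essentially the paper's proof: on a compact $K\Subset\Gamma_E$ you multiply the uniform estimate of Proposition~\ref{prop:normal-asymp} by $t^{k-1}$, identify $\Lambda_f(t\phi_E|_{\p\Omega})=\p_\nu u_{t,E}^f|_{\p\Omega}$, and note that $K$ was arbitrary. Your extra uniqueness remark is harmless, though strictly the degenerate sections do not meet $\Omega$ at all, since every point of $\Omega$ already lies on a nondegenerate section.
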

	
	\begin{proof}
		Let $K\Subset\Gamma_E$. Proposition \ref{prop:normal-asymp} gives $t^{k-1}\big(\p_\nu u_{t,E}^f-t\p_\nu\phi_E\big)=\p_\nu w_E^f+O(t^{-k/2})$ uniformly on $K$.
		Since $\Lambda_f(t\phi_E|_{\p\Omega})=\p_\nu u_{t,E}^f\big|_{\p\Omega}$, the desired limit follows on $K$. The compact set $K\Subset\Gamma_E$ was arbitrary.
	\end{proof}

	\section{Recovery of the source}\label{sec:recovery}
	
	We now complete the inverse problem and prove the reconstruction formula stated in the introduction. The only integral geometry input is the elementary Fourier slice identity for the Euclidean affine $q$-plane transform, $1\le q\le n-1$. We include the short Fourier slice argument below to fix the normalization used here; see, for instance, Helgason \cite[Chapter I, \S 6]{Helgason1999} for background on the $d$-plane transform.
	
	The recovery uses only the coefficient of the first lower-order term in the large-data asymptotics. No linearized coefficient is recovered, and no gauge has to be factored out. For a fixed $E$, the DN limit gives the ambient normal derivative of the fiberwise Poisson correction on $\Gamma_E$. Since every nondegenerate section meets the boundary only in $\Gamma_E$, this is enough to compute the flux through the whole section boundary. The section flux is precisely the affine $q$-plane integral of the source. Thus, the restricted large-data DN map determines the affine $q$-plane Radon transform of the zero extension of $f$. The Fourier slice identity gives the reconstruction formula, while its injectivity gives uniqueness.
	
	For $F\in L^1_c(\R^n)$, define its affine $q$-plane transform by
	\begin{equation}\label{eq:q-plane-transform}
		\Rq F(V,y):=\int_{y+V}F\,d\Hq,
	\end{equation}
	where $V\in\Gr(q,n)$ and $y\in V^\perp$.
	
	\begin{lemma}\label{lem:qRadon-injective}
		Let $1\le q\le n-1$ and let $G\in L^1_c(\R^n)$. If $\Rq G(V,y)=0$ for every $V\in\Gr(q,n)$ and for almost every $y\in V^\perp$, then $G=0$ almost everywhere.
	\end{lemma}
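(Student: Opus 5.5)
The plan is to use the Fourier slice identity for the affine $q$-plane transform. Fix $\xi\in\R^n\setminus\{0\}$. Since $q\le n-1$, the space $\xi^\perp$ has dimension $n-1\ge q$, so there is some $V\in\Gr(q,n)$ with $V\subset\xi^\perp$. Then $\xi\in V^\perp$. For every $x\in\R^n$ we write $x=y+z$ with $y=P_{V^\perp}x$ and $z=P_Vx$. This gives $x\cdot\xi=y\cdot\xi$, because $z\cdot\xi=0$.

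Since $G\in L^1_c(\R^n)$, Fubini's theorem applies in the orthogonal splitting $\R^n=V^\perp\oplus V$. It gives
\begin{equation*}
\widehat G(\xi)=\int_{V^\perp}\int_{V}e^{-\mathsf{i}(y+z)\cdot\xi}G(y+z)\,dz\,dy=\int_{V^\perp}e^{-\mathsf{i}y\cdot\xi}\,\Rq G(V,y)\,dy.
\end{equation*}
Here Lebesgue measure on the affine plane $y+V$ coincides with $\Hq$. Fubini also shows that $y\mapsto\Rq G(V,y)$ is defined for almost every $y$, lies in $L^1(V^\perp)$, and has compact support. By hypothesis $\Rq G(V,y)=0$ for almost every $y\in V^\perp$, so the right-hand side vanishes and $\widehat G(\xi)=0$.

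Since $\xi\ne0$ was arbitrary, $\widehat G=0$ on $\R^n\setminus\{0\}$. The function $\widehat G$ is continuous because $G\in L^1$, so $\widehat G\equiv0$. Injectivity of the Fourier transform on $L^1(\R^n)$, for instance via the tempered-distribution inversion formula, then gives $G=0$ almost everywhere.

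Most of this is routine. The one step needing care is the measure-theoretic bookkeeping. We must check that Fubini is legitimate and that the null set of exceptional $y$ does not matter, since the $y$-integral ignores null sets. We must also make sure that only one plane $V\subset\xi^\perp$ per $\xi$ is needed. This explains why the hypothesis "for every $V$" is more than enough, and why the reconstruction formula in Corollary \ref{cor:reconstruction} is independent of the choice of $V_\xi$.
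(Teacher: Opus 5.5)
Your proposal is correct and follows essentially the same route as the paper. Both use the Fourier slice identity $\widehat G(\xi)=\int_{V^\perp}e^{-\mathsf{i}y\cdot\xi}\Rq G(V,y)\,dy$ for a $q$-plane $V\subset\xi^\perp$, justified by Fubini, to get $\widehat G=0$ away from the origin, then continuity of $\widehat G$ at $\xi=0$ and Fourier uniqueness on $L^1(\R^n)$.
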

	
	\begin{proof}
		We use the Fourier slice argument as follows. We take the Fourier transform with the convention $\widehat G(\xi)=\int_{\R^n}e^{-\mathsf{i}x\cdot\xi}G(x)\,dx$, with $\mathsf{i}=\sqrt{-1}$.
		
		Fix $V\in\Gr(q,n)$. Every $x\in\R^n$ has a unique decomposition $x=y+z$, where $y\in V^\perp$ and $z\in V$. Since $G\in L^1_c(\R^n)$ and 
		\begin{equation}\label{eq:Radon-L1-bound}
			\int_{V^\perp}\big|\Rq G(V,y)\big|\,dy \le \int_{V^\perp}\int_V |G(y+z)|\,d\Hq(z)\,dy=\|G\|_{L^1(\R^n)}<\infty,
		\end{equation}
		then Fubini's theorem gives $\Rq G(V,\cdot)\in L^1(V^\perp)$. Let $\eta\in V^\perp$. Since $\eta$ is orthogonal to $V$, one has $(y+z)\cdot\eta=y\cdot\eta$ for $z\in V$. Therefore,
		\begin{equation}\label{eq:fourier-slice}
			\int_{V^\perp}e^{-\mathsf{i}y\cdot\eta}\Rq G(V,y)\,dy=\int_{\R^n}e^{-\mathsf{i}x\cdot\eta}G(x)\,dx
			=\widehat G(\eta),
		\end{equation}
		which is the Fourier slice identity for the normalization used here.
		
		By assumption, $\Rq G(V,y)=0$ for almost every $y\in V^\perp$. Note that \eqref{eq:fourier-slice} gives $\widehat G(\eta)=0$ for every $\eta\in V^\perp$. Since this holds for every $V\in\Gr(q,n)$, we can test every nonzero frequency. Let $\xi\in\R^n\setminus\{0\}$. Since $\dim\xi^\perp=n-1$ and $q\le n-1$, one can choose a $q$-dimensional plane $V\subset\xi^\perp$, then $\xi\in V^\perp$, and the previous paragraph gives $\widehat G(\xi)=0$.
		
		We have shown that $\widehat G$ vanishes on $\R^n\setminus\{0\}$. Since $G\in L^1(\R^n)$, its Fourier transform is continuous, and therefore also $\widehat G(0)=0$. The uniqueness theorem for the Fourier transform on $L^1(\R^n)$ gives $G=0$ almost everywhere.
	\end{proof}
	
	\begin{proposition}\label{prop:DN-to-Radon}
		Let $f_1,f_2\in C^\infty(\ol\Omega)$ be positive. Suppose that for a fixed $E\in\Gr(k-1,n)$,
		\begin{equation}\label{eq:fixed-E-DN}
			\Lambda_{f_1}(t\phi_E|_{\p\Omega})=\Lambda_{f_2}(t\phi_E|_{\p\Omega})
			\quad \text{on }\p\Omega
		\end{equation}
		for all sufficiently large $t$. Then, with $V=E^\perp$,
		\begin{equation}\label{eq:fixed-V-transform}
			\int_{\Omega\cap(y+V)}(f_1-f_2)\,d\Hq=0
		\end{equation}
		for every $y\in E$.
	\end{proposition}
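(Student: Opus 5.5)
The plan is to pass from equality of the DN maps to equality of the boundary normal derivatives of the two fiberwise Poisson corrections, and then integrate over each section using the flux identity.

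\textbf{Step 1: equality of the correction fluxes on $\Gamma_E$.} By Corollary~\ref{cor:DN-limit}, applied to $f_1$ and $f_2$ separately with the same fixed $E$,
\[
\p_\nu w_E^{f_j}=\lim_{t\to\infty}t^{k-1}\big(\Lambda_{f_j}(t\phi_E|_{\p\Omega})-t\p_\nu\phi_E\big)
\]
locally uniformly on $\Gamma_E$. The hypothesis \eqref{eq:fixed-E-DN} makes the expressions inside the limits equal for all large $t$. Hence $\p_\nu w_E^{f_1}=\p_\nu w_E^{f_2}$ on $\Gamma_E$.

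Note that only the leading term $t\p_\nu\phi_E$ is subtracted. It is independent of $f_j$, so no gauge issue arises.

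\textbf{Step 2: sections with $y\in\Omega_E$.} Fix $y\in\Omega_E$. By Lemma~\ref{lem:section-geometry}\ref{item_1_sec_geo}, $\p\Omega\cap(y+V)\subset\Gamma_E$, so the equality from Step 1 holds on the entire section boundary. Apply the flux identity of Lemma~\ref{lem:flux} to $f_1$ and to $f_2$ and subtract:
\[
\int_{\Omega\cap(y+V)}(f_1-f_2)\,d\Hq=\int_{\p\Omega\cap(y+V)}|P_V\nu|\big(\p_\nu w_E^{f_1}-\p_\nu w_E^{f_2}\big)\,dS_{q-1}=0 .
\]

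\textbf{Step 3: sections with $y\notin\Omega_E$.} For $y\in E\setminus\ol{\Omega_E}$, the intersection $\Omega\cap(y+V)$ is empty. For $y\in\p\Omega_E$, Lemma~\ref{lem:section-geometry}\ref{item_2_sec_geo} says $\ol\Omega\cap(y+V)$ has at most one point, so it has $\Hq$-measure zero. In both cases the integral vanishes trivially, which gives \eqref{eq:fixed-V-transform} for every $y\in E$.

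\textbf{Main point to check.} The only delicate issue is that Lemma~\ref{lem:flux} needs $w_E^{f_j}$ to be $C^1$ up to $\p\Omega\cap(y+V)$. This is supplied by Proposition~\ref{prop:smooth-corrections}. The other requirement is that the DN limit identifies the ambient normal derivative pointwise on the whole section boundary, which holds because that boundary is a compact subset of $\Gamma_E$.
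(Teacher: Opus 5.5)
Your proposal is correct and follows the paper's proof essentially step for step. You use Corollary~\ref{cor:DN-limit} to get $\p_\nu w_E^{f_1}=\p_\nu w_E^{f_2}$ on $\Gamma_E$, then Lemma~\ref{lem:section-geometry} and the flux identity of Lemma~\ref{lem:flux} for $y\in\Omega_E$, and trivial vanishing for empty or degenerate sections. One minor simplification: for $y\notin\Omega_E$ the set $\Omega\cap(y+V)$ is already empty because $\Omega_E=P_E(\Omega)$, so part (ii) of Lemma~\ref{lem:section-geometry} is not actually needed.
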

	
	\begin{proof}
		The equality of the DN maps means that the two normal derivatives agree on $\p\Omega$ for the same large boundary value $t\phi_E|_{\p\Omega}$. Subtracting the common leading term $t\p_\nu\phi_E$ and applying Corollary \ref{cor:DN-limit} gives
		\begin{equation}\label{eq:normal-corrections-equal}
			\p_\nu w_E^{f_1}=\p_\nu w_E^{f_2}
			\quad
			\text{on } \Gamma_E.
		\end{equation}
		Here, the equality is pointwise on $\Gamma_E$, since the convergence in Corollary \ref{cor:DN-limit} is locally uniform there.
		
		Let $y\in\Omega_E$ first. Then the section $\Omega_{E,y}=\Omega\cap(y+V)$ is nondegenerate. By Lemma \ref{lem:section-geometry}, every point of $\p\Omega\cap(y+V)$ belongs to $\Gamma_E$. Since $\p\Omega\cap(y+V)$ is compact and $|P_V\nu|>0$ on it, it is contained in some compact set $K_y\Subset\Gamma_E$. The locally uniform convergence in Corollary \ref{cor:DN-limit} can be applied to the whole section boundary. Applying the flux identity in Lemma \ref{lem:flux} to $f_1$ and $f_2$ gives
		\[
		\int_{\Omega\cap(y+V)}f_j\,d\Hq
		=
		\int_{\p\Omega\cap(y+V)}|P_V\nu|\p_\nu w_E^{f_j}\,dS_{q-1},
		\quad j=1,2.
		\]
		The boundary integrals are equal by \eqref{eq:normal-corrections-equal}. Hence \eqref{eq:fixed-V-transform} holds for every $y\in\Omega_E$.
		
		If $y\notin\ol{\Omega_E}$, then $\Omega\cap(y+V)$ is empty. If $y\in\p\Omega_E$, Lemma \ref{lem:section-geometry} gives that $\ol\Omega\cap(y+V)$ has zero $q$-dimensional Hausdorff measure. In both cases, the integral in \eqref{eq:fixed-V-transform} is zero. This proves the claim for every $y\in E$.
	\end{proof}

	\begin{proof}[Proof of Corollary \ref{cor:reconstruction}]
		By Corollary \ref{cor:DN-limit}, the boundary limit \eqref{eq:reconstruction-NE} exists locally uniformly on $\Gamma_E$ and satisfies
		\begin{equation}\label{eq:NE-equals-normal-w}
			N_E^f=\p_\nu w_E^f
			\quad
			\text{on } \Gamma_E.
		\end{equation}
		Fix $V\in\Gr(q,n)$ and put $E=V^\perp$. If $y\in E$ and $y\in\Omega_E=P_E(\Omega)$, then the section $\Omega\cap(y+V)$ is nondegenerate. By Lemma \ref{lem:section-geometry}, every point of $\p\Omega\cap(y+V)$ lies in $\Gamma_E$. Therefore, using \eqref{eq:NE-equals-normal-w} and the flux identity in Lemma \ref{lem:flux}, we get
		\begin{equation}\label{eq:Sf-equals-Radon}
			\mathcal S_f(V,y)
			=
			\int_{\Omega\cap(y+V)} f\,d\Hq.
		\end{equation}
		If $y\notin\ol{\Omega_E}$, then the section is empty. If $y\in\p\Omega_E$, Lemma~\ref{lem:section-geometry} gives that $\ol\Omega\cap(y+V)$ has zero $q$-dimensional Hausdorff measure. Hence, after extending $f$ by zero outside $\Omega$, \eqref{eq:Sf-equals-Radon} holds for every $y\in E=V^\perp$, with the convention used in the definition of $\mathcal S_f$. In other words,
		\begin{equation}\label{eq:Sf-Radon-transform}
			\mathcal S_f(V,y)=\Rq F(V,y)
			\quad
			\text{for almost every } y\in V^\perp,
		\end{equation}
		where $F=f\mathbf 1_\Omega$.
		
		Let $\xi\ne0$ and choose any $q$-plane $V_\xi\subset\xi^\perp$, then $\xi\in V_\xi^\perp$. Applying the Fourier slice identity \eqref{eq:fourier-slice} to $G=F$ and $V=V_\xi$ gives
		\begin{equation*}
			\widehat F(\xi)
			=
			\int_{V_\xi^\perp}
			e^{-\mathsf{i}y\cdot\xi}
			\Rq F(V_\xi,y)\,dy.
		\end{equation*}
		Using \eqref{eq:Sf-Radon-transform}, this becomes exactly \eqref{eq:reconstruction-Fourier}. The value at $\xi=0$ is determined by continuity of $\widehat F$, and changing the value at one frequency does not affect the inverse Fourier transform.
		
		Finally, the usual Fourier inversion theorem for compactly supported $L^1$ functions gives $F=\mathcal F^{-1}\widehat F$ in $\mathcal S'(\R^n)$. Since $F=f$ in $\Omega$ and $f$ is smooth there, the reconstruction gives $f$ pointwise in $\Omega$. This proves \eqref{eq:reconstruction-inverse-Fourier} and \eqref{eq:reconstruction-pointwise}.
	\end{proof}

	\begin{proof}[Proof of Theorem \ref{thm:main}]
		Let $G:=f_1-f_2$ in $\Omega$, and extend $G$ by zero to $\R^n$, then $G\in L^1_c(\R^n)$. It is enough to show that the affine $q$-plane transform of $G$ vanishes for every $q$-plane and every translate.
		
		Let $V\in\Gr(q,n)$ be arbitrary and set $E=V^\perp$. Since $q=n-k+1$, one has $\dim E=k-1$, so $E\in\Gr(k-1,n)$. Proposition \ref{prop:DN-to-Radon} applied to this $E$ gives
		\begin{equation}\label{eq:Radon-zero-fixed}
			\Rq G(V,y)=0
			\quad
			\text{for every } y\in V^\perp.
		\end{equation}
		Since $V\in\Gr(q,n)$ was arbitrary, the full affine $q$-plane transform of $G$ vanishes.
		
		In the present theorem, $2\le k\le n$, so $1\le q=n-k+1\le n-1$. This is the range covered by Lemma~\ref{lem:qRadon-injective}. Thus, $G=0$ almost everywhere in $\R^n$. Since $f_1-f_2$ is smooth in $\Omega$, it follows that $f_1=f_2$ everywhere in $\Omega$.
	\end{proof}
	
	\begin{proof}[Proof of Corollary \ref{cor:power}]
		All solutions are considered on the $k$-admissible branch. Hence, $\sigma_k(D^2u)>0$, and for positive sources the equation $\sigma_k(D^2u)^\theta=f$ is equivalent to $\sigma_k(D^2u)=f^{1/\theta}$. If the DN maps for the powered equation agree on the same large boundary family, then the DN maps for the $k$-Hessian equation with sources $f_1^{1/\theta}$ and $f_2^{1/\theta}$ agree on that family. Theorem \ref{thm:main} gives $f_1^{1/\theta}=f_2^{1/\theta}$ in $\Omega$, and then $f_1=f_2$ in $\Omega$.
	\end{proof}

	\begin{remark}\label{rem:no-gauge}
		No residual coordinate gauge appears in the present inverse source problem. The proof does not identify a linearized coefficient from a linearized DN map; therefore, it does not enter the anisotropic Calder\'on-type gauge structure. Instead, the nonlinear DN map is evaluated along large rank $k-1$ rays. The first nontrivial large-data coefficient is a scalar fiberwise Poisson correction, and its boundary flux is an invariant section integral of the source. Thus, the inverse step gives direct affine Radon data for $f$, rather than coefficient data modulo a diffeomorphism or conformal factor.
	\end{remark}

	\section*{Statements and declarations}
	
	\para{Data availability statement} No datasets were generated or analyzed during the current study.
	
	\para{Conflict of interest} The author declares no conflict of interest.
	
	\para{Acknowledgments} The author is partially supported by the National Science and Technology Council (NSTC), Taiwan, under the project 113-2115-M-A49-017-MY3. The author also acknowledges financial support from the Alexander von Humboldt Foundation through the Henriette Herz Scouting Programme, hosted by Universit\"at Duisburg-Essen, Germany. 
	
	\bibliographystyle{alpha}
	\bibliography{ref}
	
\end{document}